\newcommand{\makered}[1]{}
\newlength{\rulebreite}
\def\timesover#1#2#3{\ \xymatrix@1@=0pt@M=0pt{ _{#1}&\times&_{#2} \\& ^{#3}&}\ }
\def\otimesover#1#2#3{\ \xymatrix@1@=0pt@M=0pt{ _{#1}&\otimes&_{#2} \\& ^{#3}&}\
}
\newtheorem{theorem}{Theorem}
\newtheorem{lemma}[theorem]{Lemma}
\newtheorem{proposition}[theorem]{Proposition}
\numberwithin{equation}{section}
\def\A{\mathbb{A}}
\def\N{\mathbb{N}}
\def\Z{\mathbb{Z}}
\def\PP{\mathbb{P}}
\def\Q{\mathbb{Q}}
\def\R{\mathbb{R}}
\def\C{\mathbb{C}}
\def\F{\mathbb{F}}
\def\V{\mathbb{V}}
\newcommand{\cf}{{\em cf. }}
\newcommand{\cF}{{\mathcal  F}}
\newcommand{\cG}{{\mathcal  G}}
\newcommand{\cO}{{\mathcal O}}
\newcommand{\Pic}{{\rm Pic\,}}
\newcommand{\Spec}{{\rm Spec\, }}
\renewcommand{\div}{{\rm div\,}}
\newcommand{\pr}{{\rm pr}}
\newcommand{\degtr}{{\rm deg.tr}}
\newcommand{\rk}{{\rm rk \,}}
\newcommand{\hE}{{\widehat{E}}}
\newcommand{\hM}{{\widehat{M}}}
\newcommand{\hT}{{\widehat{T}}}
\newcommand{\hV}{{\widehat{V}}}
\newcommand{\hX}{{\widehat{X}}}
\newcommand{\hZ}{{\widehat{Z}}}
\newcommand{\ra}{\rightarrow}
\newcommand{\lrasim}{\stackrel{\sim}{\longrightarrow}}
\newcommand{\lra}{\longrightarrow}
\newcommand{\hra}{\hookrightarrow}
\newcommand{\hlra}{{\lhook\joinrel\longrightarrow}}
\newcommand{\Bun}{\mathbf{Bun}}
\theoremstyle{plain}
\newtheorem{thm}{Theorem}
\newtheorem{lem}[thm]{Lemma}
\newtheorem{cor}[thm]{Corollary}
\newtheorem{prop}[thm]{Proposition}
\theoremstyle{definition}
\newtheorem{defn}[thm]{Definition}
\newtheorem{rmk}[thm]{Remark}
\newtheorem{ex}[thm]{Example}
\newtheorem{claim}[thm]{Claim}
\numberwithin{thm}{section}
\numberwithin{equation}{section}
\newcommand{\ga}[2]{\begin{gather}\label{#1}#2 \end{gather}}
\newcommand{\surj}{\twoheadrightarrow}
\newcommand{\sD}{{\mathcal D}}
\newcommand{\sE}{{\mathcal E}}
\newcommand{\sF}{{\mathcal F}}
\newcommand{\sG}{{\mathcal G}}
\newcommand{\sH}{{\mathcal H}}
\newcommand{\sL}{{\mathcal L}}
\newcommand{\sM}{{\mathcal M}}
\newcommand{\sO}{{\mathcal O}}
\newcommand{\sP}{{\mathcal P}}
\newcommand{\sT}{{\mathcal T}}
\newcommand{\sU}{{\mathcal U}}
\newcommand{\sV}{{\mathcal V}}
\newcommand{\sW}{{\mathcal W}}
\renewcommand{\P}{{\mathbb P}}
\let\@wraptoccontribs\wraptoccontribs\makeatother
\begin{document}

\title[Simply connected varieties in characteristic $p>0$]{ Simply connected
varieties in characteristic $p>0$ }
\author{H\'el\`ene Esnault}
\address{
Freie Universit\"at Berlin, Arnimallee 3, 14195, Berlin,  Germany}
\email{esnault@math.fu-berlin.de}
\author{Vasudevan Srinivas}
\address{School of Mathematics, Tata Institute of Fundamental Research, Homi Bhabha Road, Colaba, Mumbai-400005, India} 
\email{srinivas@math.tifr.res.in}

\contrib[\kern3ex with an appendix \vbox to 1.05em{}by]{Jean-Beno\^it Bost}
\address{D\'epartement  de Math\'ematiques, Universit\'e Paris-Sud, B\^at.~425, 91405, Orsay, France}
\email{jean-benoit.bost@math.u-psud.fr}

\date{ May 5, 2014}
\thanks{The first  author is supported by  the Einstein program and the ERC
Advanced
Grant 226257, the second author is supported by the Humboldt Foundation, by a
J.C. Bose Fellowship, and by a membership at IAS, Princeton.}
\subjclass[2010]{11G99, 14B20, 14G17, 14G99, 14F35}
\keywords{Stratified bundles, fundamental groups, formal and algebraic geometry}
\begin{abstract} We show that there are no non-trivial stratified bundles over a
smooth simply connected quasi-projective 
variety over an algebraic closure of a finite field, if the variety admits a
normal projective compactification with 
boundary locus of codimension $\ge 2$.

  \end{abstract}
\maketitle
\section{Introduction}
On a smooth quasi-projective variety $X$ over the field $k$ of complex numbers,
the theorem of Mal\v cev-Grothendieck  (\cite{Mal40}, \cite{Gro70}) shows   that
 the 
\'etale fundamental group $\pi_1^{{\rm \acute{e}t}}(X)$ controls the regular
singular $\sO_X$-coherent $\sD_X$-modules, that is the regular singular  flat
connections:  they are trivial  if $\pi_1^{{\rm \acute{e}t}}(X)=0$. 
The aim of this article is to show some variant if the ground field $k$ is
algebraically closed of characteristic $p>0$.

\medskip

Over the field  $\C$ of complex numbers,  the proof is very easy, but it makes a
crucial use of the topological
fundamental group $\pi_1^{\rm top}(X)$ and the fact that it is finitely
generated. Indeed, a   complex  linear representation has values in $GL(r, A)$
for a ring $A$ of finite type over $\Z$,  
and if it is non-trivial, it remains non-trivial after specializing to some
closed point of $A$. 
 If $k$ has characteristic $p>0$, we no longer have this tool at
our disposal. 

\medskip

All we know is that the category of $\sO_X$-coherent $\sD_X$-modules
  is Tannakian, neutralized by a rational point on $X$,
and that its profinite completion is, according to dos Santos,  the \'etale
fundamental group (\cite[Cor.~12]{dSan07}). 

\medskip

Nonetheless, as conjectured by Gieseker \cite{Gie75}, the same theorem holds 
true under the extra assumption that $X$ is projective (\cite{EsnMeh10}):
triviality of
$\pi_1^{\rm \acute{e}t}(X)$ implies triviality of  $\sO_X$-coherent
$\sD_X$-modules.
\medskip

On the other hand, it is shown in \cite[Thm.1.1]{Kin14} that dos Santos' theorem
{\it loc. cit.}  may be refined in the following way. The category of
$\sO_X$-coherent $\sD_X$-modules 
has a full subcategory of regular singular ones. The
profinite completion of its Tannaka group is then the tame fundamental group.

This enables one to ask (see \cite[Questions~3.1]{Esn12}): 
\begin{itemize}
\item[i)] If  $\pi_1^{\rm \acute{e}t}(X)=0$, are all $\sO_X$-coherent
$\sD_X$-modules 
 trivial? 
\item[ii)] If $\pi_1^{\rm \acute{e}t, tame}(X)=0$, are all regular singular
$\sO_X$-coherent $\sD_X$-modules  trivial?
\end{itemize}

\medskip

In this article, we address Question i). Fundamental groups of quasi-projective 
non-projective  smooth varieties in characteristic $p>0$ are not well
undersood. 
We give in Section~\ref{sec:field} some non-trivial examples where we can obtain
some
reasonable structure.

\medskip

Our main theorem \ref{thm:mainthm} asserts that Question i) has a positive
answer  if $X$ has a normal compactification with boundary of codimension $\ge
2$, and if $k$ is an algebraic closure of the prime field $\F_p$. 

\medskip 

The arithmetic assumption on the ground field comes from the application of
Hrushovski's main theorem \cite[Cor.~1.2]{Hru04} and the fact that we cannot, 
in general, define a surjective specialization homomorphism on the \'etale 
fundamental group (we do not know, however, if this can be done for the smooth 
loci of a family of normal projective varieties). See
Section~\ref{sec:spe}.

\medskip

 The geometric assumption on the existence of a good compactification with small
boundary enables us to show a strong boundedness theorem \ref{thm:boundedness}.
The main issue, if we drop the assumption,  is to define a suitable family of 
extensions    to a particular 
normal projective compactification of $X$ of the $F$-divided  vector bundles 
associated to a 
$\sO_X$-coherent $\sD_X$-module via Frobenius descent, in such a way that they
form a bounded 
family of sheaves. 
 
 \medskip
 
 Under the geometric assumption in our result, the reflexive extension of the 
 bundles does it. See Remark~\ref{rmk:mainthm}.  In general, we do not know. 
 Over the field of complex numbers, if we assume in addition that the   
 $\sO_X$-coherent $\sD_X$-module
is regular singular, then we have Deligne's canonical extension at our disposal.
Indeed,
Deligne shows in \cite{Del14} boundedness for those. His non-algebraic proof
relies on the fact that the topological fundamental group is finitely
generated. 
  
  \medskip
  
  We use the existence of a projective ample complete intersection curve  to
reduce the problem to the case where the underlying $F$-divided bundles  $E_n$ of
a
$\sO_X$-coherent $\sD_X$-module 
are stable of slope $0$, and of given Hilbert polynomial.  This
comes from the Lefschetz theorem Theorem~\ref{thm:bost} for stratified bundles, 
which ultimately relies on Bost's most recent generalization of the
Grothendieck 
$\mathbf{Lef}$ conditions, see Appendix~\ref{app}.
Then the proof follows the line of the proof of the main theorem in 
\cite{EsnMeh10}.

\medskip

In Section~\ref{sec:field}, relying again on the Lefschetz theorem, we are able
to give a non-trivial family of examples with trivial \'etale fundamental group 
for which one can also conclude  that $\sO_X$-coherent $\sD_X$-modules 
 are trivial, this time
over any field, as we can argue without using Hrushovski's theorem {\it loc.
cit.}. 

\medskip

{\it Acknowledgements:}  It is a pleasure to thank Adrian Langer for
a discussion his boundedness theorems, which  play a
very important
role in this article. We also thank Alexander Schmitt for discussions.  We
thank 
Jean-Beno\^{\i}t Bost for his interest, and for contributing in the form of
Appendix A to 
this article.  We thank the referees for their precise work which helped us to
improve 
the exposition of our text, in  particular,  it led us to treat the Verschiebung
map in 
a clearer way in the proof of Theorem~\ref{thm:mainthm}.

\section{Boundedness} \label{sec:boundedness}

We fix the notations for this section. Let $X$ be a projective normal
irreducible variety of dimension $d\ge 1$ over an algebraically closed field $k$
of characteristic $p>0$. Let  $j: U\to X$ be the open embedding of the regular
locus. Recall that if $E$ is a vector bundle, i.e., a locally free coherent
sheaf, 
on $U$, then $j_*E$ is a coherent sheaf on $X$, and is the reflexive hull of
$E$. 
Indeed by normality,  one has 
$j_*j^*\sO_X=j_*\sO_U=\sO_X$,  which implies that 
$j_*j^*\sH om(\sF, \sO_X)=\sH om(\sF, j_*\sO_U) =\sH om(\sF, \sO_X)$  for any 
$\sO_X$- coherent sheaf $\sF$,  which is thus coherent. In addition,  $\sH
om(\sF, \sO_X)=
j_*j^*\sF^\vee$ if $\sF$ a vector bundle, and contains any other torsion free
extension  
to $X$ of $j_*\sF^\vee$. We apply this to $j^*\sF=E^\vee$.

We fix an ample Cartier divisor $Y\to X$.  For any coherent sheaf $\sE$ on $X$, 
we write 
$\sE(mY)$ for $\sE\otimes_{\sO_X} \sO_X(mY)$. For a vector bundle $E$ on $U$,
we define the Hilbert polynomial $\chi(X, j_*E(mY))\in \Q[m]$. Here $\chi$
stands 
for the Euler characteristic of the  coherent sheaf  $j_*E(mY)$, which is equal 
to $j_*(E(mY|_U))$ by the projection formula. We recall the definition of an 
$F$-divided sheaf in Definition~\ref{defn:Fdiv}. 

This section is  devoted to the proof of the following statement. 
\begin{thm} \label{thm:boundedness} We fix $r\in \N\setminus \{0\}$. 
There are finitely many polynomials $\chi_i(m)\in \Q[m], \  i \in I=\{1,\ldots,
N\}$ such that for any $F$-divided sheaf $E=(E_n,\sigma_n)_{n\in \N}$ of rank
$r$ 
on $U$, there is a $n_0(E)\in \N$ such that for all $n\ge n_0(E)$, 
$\chi(X, j_*E_n(mY))\in \{\chi_1(m), \ldots, \chi_N(m)\}$.
\end{thm}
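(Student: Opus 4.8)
The plan is to reduce the statement to a boundedness result of Langer type. The key point is that a stratification $E=(E_n,\sigma_n)_{n\in\N}$ is an infinite tower of Frobenius descents: $E_n\cong F^*E_{n+1}$ on $U$, where $F$ is the (relative, or absolute since $k$ is perfect) Frobenius. I would first record what the existence of such a tower forces on each $E_n$. Because $E_n$ is, for every $n$, the pullback under arbitrarily high powers of Frobenius of another vector bundle on $U$, it is strongly semistable of slope $0$ once restricted to any smooth projective curve $C\subset U$ cut out by a general complete intersection of ample divisors: indeed, for any $m$, $E_n|_C=F^{m*}(E_{n+m}|_C)$, so no quotient of $E_n|_C$ can acquire negative degree after any Frobenius twist, and similarly no sub-bundle positive degree; hence $\mu_{\max}(E_n|_C)=\mu_{\min}(E_n|_C)=0$ in the strong sense. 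By the Mehta--Ramanathan theorem (in the form valid over $U$, applied to the reflexive sheaf $j_*E_n$ on the normal projective $X$), strong semistability of slope $0$ on a general complete intersection curve propagates to $\mu$-semistability of slope $0$ of $j_*E_n$ with respect to $Y$ on all of $X$.

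Next I would invoke Langer's boundedness theorem (\cite{Lan09}): the family of torsion-free (here reflexive) sheaves on the fixed polarized normal projective variety $(X,Y)$ that are $\mu$-semistable of slope $0$, of fixed rank $r$, and with bounded discriminant (equivalently bounded second Chern class, equivalently bounded $\Delta$) is bounded, hence realizes only finitely many Hilbert polynomials $\chi_1(m),\dots,\chi_N(m)$. So everything comes down to two inputs: (a) the slope-zero semistability of $j_*E_n$ just discussed, and (b) a uniform bound on the discriminant $\Delta(j_*E_n)$, independent of $n$ and of $E$. For (b) the mechanism is Frobenius pullback on Chern numbers: on a smooth variety $F^*$ multiplies $c_1$ by $p$ and $c_2$ by $p^2$, so $\Delta(F^*\mathcal E)=p^2\Delta(\mathcal E)$; since $E_n=F^*E_{n+1}$, we get $\Delta(E_n)=p^2\Delta(E_{n+1})$ on $U$, so the sequence $\Delta(j_*E_n)$ (computed via a resolution, or via the Chern classes of the restrictions to the general complete intersection surface $S\subset X$) is, up to a fixed correction coming from the singular locus which has codimension $\ge 2$, a sequence in $\frac{1}{p^{2n}}\Z$ that is bounded above because $\Delta$ of a slope-zero semistable sheaf on a surface is $\ge$ a fixed (Bogomolov-type) lower bound; running this backwards, $\Delta(j_*E_n)\to 0$, and in particular $\Delta(j_*E_n)$ is bounded by a constant depending only on $r$, $X$, $Y$ for all $n\ge n_0(E)$. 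That is precisely the $n_0(E)$ in the statement: past that threshold the discriminant has dropped into the fixed range where Langer's boundedness applies.

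Concretely the steps, in order: (1) restrict everything to a general smooth complete intersection curve $C$ and surface $S$, using that the boundary $X\setminus U$ has codimension $\ge 2$ so generic complete intersections of ample divisors avoid it and so that restriction to $C$, $S$ commutes with $j_*$; (2) prove $E_n|_C$ is strongly semistable of degree $0$ from the Frobenius-tower structure; (3) use Mehta--Ramanathan to upgrade to $\mu$-semistability of slope $0$ of $j_*E_n$ on $(X,Y)$; (4) use $\Delta(F^*-)=p^2\Delta(-)$ together with the Bogomolov inequality on $S$ to show $\Delta(j_*E_n)$ is bounded (indeed $\to 0$) for $n$ large, uniformly; (5) apply Langer's boundedness theorem to the family of $\mu$-semistable slope-$0$ reflexive sheaves of rank $r$ with bounded discriminant on $(X,Y)$ to extract the finite list $\{\chi_1,\dots,\chi_N\}$; (6) conclude that for $n\ge n_0(E)$ the polynomial $\chi(X,j_*E_n(mY))$ lies in that list.

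The main obstacle I expect is step (4), the uniform control of the discriminant across the whole tower: one must be careful that $j_*$ of a Frobenius pullback is not literally the Frobenius pullback of $j_*$, so the clean identity $\Delta(F^*\mathcal E)=p^2\Delta(\mathcal E)$ only holds on $U$, and one has to argue that the defect supported on the codimension-$\ge 2$ singular locus does not corrupt the Chern-class computation made on a general complete intersection surface $S$ (which, being general, misses that locus) — so in fact the identity does hold for the relevant intersection numbers, and the subtlety is purely in setting up the bookkeeping correctly rather than in any deep new idea. A secondary subtlety is making the Mehta--Ramanathan step work on the normal (not smooth) $X$ for reflexive sheaves, which is standard but needs citation. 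Everything else — the semistability from the tower, and the final appeal to Langer — is then formal.
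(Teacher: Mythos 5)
Your proposal has two genuine gaps. First, the pivot to a general complete intersection surface $S\subset U$ that ``avoids the singular locus'' is not available: $X\setminus U$ can have codimension exactly $2$ in $X$, and then any complete intersection of $d-2$ ample divisors meets it in a nonempty finite set of points. So the surface on which you want to run the Frobenius--discriminant computation is itself only normal, $j_*$ does not disappear, and the clean identities $E_n|_S=F_S^*(E_{n+1}|_S)$ and $\Delta(F^*\mathcal E)=p^2\Delta(\mathcal E)$ fail for the reflexive extensions across the singular points. This is exactly where the paper's work lies: after reducing to $d=2$ (via Langer's theorem applied to generic complete intersection surfaces that \emph{do} meet $\Sigma$), it passes to a resolution $\pi:X'\to X$ and controls the Chern classes of $V_n'=(\pi^*j_*E_n)^{\vee\vee}$; the discrepancy between $j_*F^*$ and $F^*j_*$ produces correction terms (the quotient $Q_n$ and the divisor $A_n$) that must be estimated cohomologically, and the outcome is only the inequality $|ch_2(V'_n)|\le p^{-2n}|ch_2(V'_0)|+C$, not exact scaling --- which is precisely what produces the threshold $n_0(E)$.

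Second, even on a smooth surface, ``$\mu$-semistable of slope $0$ with bounded discriminant'' does not yield finitely many Hilbert polynomials: $\chi(V(m))$ depends on $c_1(V)$ as a class in $NS$ (through $c_1^2$ and $c_1\cdot K$), and there are infinitely many classes $\xi$ with $\xi\cdot H=0$ and $\Delta$ bounded. One must prove finiteness of the set $\{c_1\}$ itself. The relation $p^nc_1(E_n)=c_1(E_0)$ forces these classes to be torsion in $NS(X)$, but on the resolution the components supported on the exceptional divisors remain a priori uncontrolled; bounding their multiplicities $m_{x,i}(V_n')$, using negative definiteness of the exceptional intersection form and a fundamental cycle argument in the style of Langer, is the technical heart of the paper's proof and is entirely absent from your plan. (A side remark: Mehta--Ramanathan semistability of $j_*E_n$ plays no role in the boundedness proof; stability enters only later, on the ample curve, when the main theorem is reduced to the stable case.)
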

If dim$(X)=1$,  then $X$ is smooth projective, so $E$ has degree $0$
(\cite[Cor.~2.2]{EsnMeh10}),  so one has 
$\chi(X, E(m))=r \chi(X, \sO_X(m))$ by Riemann-Roch on curves.
So we may assume $\dim X\geq 2$.

\medskip
In general, we reduce to the $ 2$-dimensional case as follows. Let $d={\rm
dim}(X)\ge 3$.  We choose a 
Noether normalization $h: X\to \mathbb{P}^d$ over $k$. 
This defines the open $\sU$ of the product of the dual projective spaces $
((\mathbb{P}^d)^\vee)^{d-2}$,   whose points   $x\in \sU$  parametrize
intersections of 
hyperplanes $H_x:= (H_{1,x} \cap \ldots \cap H_{d-2,x}) \to \mathbb{P}^d$ 
such that $h^*H_x=:Y_x\subset X\otimes_k \kappa(x)$ intersects   $X\setminus U$
properly.  Here $\kappa(x)$ is the residue field of $x$.  The open $\sU$ is
non-empty, and irreducible. 

\begin{lem} \label{lem:red_2dim}
Let $\Spec K\xrightarrow{x} \sU$ be a geometric generic point, $i_x: Y_x\to
X\otimes_k K$ be the corresponding closed embedding, $j_{Y_x}: V_x = (U
\otimes_k K) \cap Y_x\to Y_x$.  Then 
\begin{itemize}
\item[i)] $Y_x$ is a projective normal irreducible $2$-dimensional variety;
\item[ii)]  For any vector bundle $E$ on $U$, $i_x^*j_* E=  
(j_{Y_x})_* (i_x|_{U\otimes_k K})^* E $.
\end{itemize}
\end{lem}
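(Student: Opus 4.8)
The plan is to realize both statements as consequences of a sufficiently strong Bertini-type theorem applied to the Noether normalization $h\colon X\to\PP^d$ over the (very large) field $K$. For part i), I would first note that $Y_x = h^*H_x$ is a complete intersection of $d-2$ hyperplane sections of $X$ with respect to the projectively normal embedding induced by $h$, hence a projective variety of dimension $2$; its irreducibility and the fact that it meets $X\setminus U$ properly (so in particular $V_x$ is dense in $Y_x$) follow from the definition of the open set $\sU$ together with Bertini over the algebraically closed field $K$, taking $x$ to be a geometric generic point of $\sU$. For normality I would invoke Serre's criterion: $X$ is normal, so it is $(R_1)+(S_2)$, and a general complete intersection curve/surface cut out by ample divisors inherits $(S_2)$ (Cohen--Macaulayness is preserved under regular sequences, and $(S_2)$ is preserved by a general hyperplane section of an $(S_2)$ scheme, by a theorem of Flenner or the graded version of Bertini for depth conditions), while $(R_1)$ for $Y_x$ holds because a general member of an ample linear system avoids the codimension-$\ge 2$ singular locus of $X$ and meets $X^{\mathrm{reg}}=U$ transversally, so $Y_x$ is regular in codimension $1$. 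This is where the hypothesis that $X\setminus U$ has codimension $\ge 2$ in $X$ — equivalently that the singular locus of $X$ has codimension $\ge 2$ — is used, and it is the crux: it guarantees that after cutting down to dimension $2$, the subvariety $Y_x$ is still normal rather than merely $(S_2)$. A subtle point, which I would address carefully, is that $\sU$ has residue field a finitely generated extension of $k$, and one must base change to its algebraic closure $K$ to apply Bertini over an algebraically closed field; this is precisely why the statement is phrased in terms of a geometric generic point.

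For part ii), the assertion is that formation of the reflexive hull $j_*E$ commutes with restriction to $Y_x$, i.e.\ $i_x^* j_* E \cong (j_{Y_x})_*\big((i_x|_{V_x})^* E\big)$ as sheaves on $Y_x\otimes_k K$. The strategy is local-cohomological. Write $F = j_*E$, a reflexive (in particular $(S_2)$, since $X$ is normal) coherent sheaf on $X$ whose restriction to $U$ is $E$. First I would establish that after restricting to a general $Y_x$, the sheaf $i_x^*F$ is still $(S_2)$ on $Y_x$: this again follows from a Bertini-type statement for the condition $\depth_{\sO} \ge 2$, using that $F$ is $(S_2)$ on $X$ and that $Y_x$ is cut out by a regular sequence of general hyperplane sections on the support of $F$ (depth behaves well under general hyperplane sections, cf.\ the results of Flenner--O'Carroll--Vogel or the graded local cohomology computations). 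Granting that, $i_x^*F$ is a reflexive, hence $j_{Y_x}$-saturated, extension of its restriction to $V_x$; and that restriction is exactly $(i_x|_{V_x})^*E$ because $i_x^*$ is compatible with the open restriction $U\otimes_k K \hookrightarrow X\otimes_k K$. Since a coherent sheaf $G$ on the normal (in particular $(S_2)$) surface $Y_x$ which is itself $(S_2)$ and restricts to a vector bundle on the dense open $V_x$ whose complement has codimension $\ge 1$... — here I must be slightly careful, as on a surface a single point has codimension $2$, so the relevant statement is that $G = (j_{Y_x})_*(G|_{V_x})$ precisely when $G$ has depth $\ge 2$ at every point of $Y_x\setminus V_x$, which is the $(S_2)$ condition there. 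Combining: $i_x^*F$ and $(j_{Y_x})_*\big((i_x|_{V_x})^*E\big)$ are both $(S_2)$ coherent sheaves on $Y_x$ with the same restriction to $V_x$, hence the natural map between them (adjunction gives a canonical map $i_x^*F \to (j_{Y_x})_* i_x^*F|_{V_x}$) is an isomorphism.

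The main obstacle I anticipate is the Bertini statement for the $(S_2)$ (equivalently $\depth\ge 2$) condition: one needs that for a coherent sheaf $F$ on $X$ which is $(S_2)$ (resp.\ for the scheme $X$ which is normal $=(R_1)+(S_2)$), a general complete intersection $Y_x$ of ample divisors from the linear system defined by $h$ has the property that $i_x^*F$ is $(S_2)$ (resp.\ $Y_x$ is $(R_1)+(S_2)$, i.e.\ normal). The $(R_1)$ part and irreducibility are classical Bertini (over the algebraically closed $K$), but the $(S_2)$/depth part requires the finer local-cohomological Bertini theorems; I would cite Flenner's "Die S\"atze von Bertini f\"ur lokale Ringe" or the treatment via graded local cohomology, checking that the hypotheses (in particular, working with a base-point-free very ample system, which the Noether normalization $h$ provides) are met. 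Everything else — compatibility of $i_x^*$ with open restriction, reflexive $\Leftrightarrow (S_2)$ on a normal scheme, the characterization of $(j_{Y_x})_*$ of a vector bundle as the $(S_2)$-ification — is formal once the Bertini inputs are in place.
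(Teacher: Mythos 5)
Your overall architecture is the right one: everything reduces to showing that $\sO_{Y_x}$ and $i_x^*j_*E$ are $S_2$ (plus $R_1$ for $Y_x$ and finiteness of $Y_x\setminus V_x$), after which ii) is the formal statement that an $S_2$ sheaf on the normal surface $Y_x$ coincides with the pushforward of its restriction from the complement of finitely many points. But the step where you produce the $S_2$ property of the restriction is not justified as written. If $F=j_*E$ is only known to be $S_2$ on the $d$-fold $X$ and $Y_x$ is cut out by a regular sequence of $d-2$ hyperplane pullbacks, then at a closed point $y\in Y_x$ one has $\depth (F|_{Y_x})_y=\depth F_y-(d-2)$, while $S_2$ only guarantees $\depth F_y\ge 2$; the naive count gives $\depth\ge 4-d$, which is already useless for $d=3$. ``Depth behaves well under hyperplane sections'' means it drops by one each time, so $S_2$ is \emph{not} inherited pointwise: what is actually needed is that $Y_x$ \emph{avoids} the non-Cohen--Macaulay locus of $F$. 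That locus does have codimension $\ge 3$ (at points of codimension $\le 2$, the $S_2$ condition forces CM), so a general $2$-dimensional complete intersection misses it and the argument closes --- but this avoidance step is the real content of the lemma and is missing from your plan; if you invoke a Flenner-type Bertini theorem for $S_k$ you must check it delivers exactly this for sheaves and for iterated sections down to dimension $2$.

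The paper obtains this by a different and slicker device: push forward along the finite Noether normalization $h:X\to\PP^d$. The sheaf $h_*j_*E$ is reflexive on the \emph{smooth} $\PP^d$, hence locally free outside a closed subset of codimension $\ge 3$; a general linear subspace $H_x$ of dimension $2$ therefore avoids that subset entirely, so $(h_*j_*E)|_{H_x}=h_*\bigl(j_*E|_{Y_x}\bigr)$ is locally free on the smooth surface $H_x$, whence $j_*E|_{Y_x}$ is reflexive on $Y_x$ (depth is unchanged under the finite map $Y_x\to H_x$). Applying this to $E=\sO_U$ gives the $S_2$ property of $\sO_{Y_x}$, hence normality by Serre's criterion; connectedness comes from Artin's vanishing theorem rather than Bertini irreducibility, and irreducibility then follows from normality; part ii) then holds because the geometric generic point factors through every one of the dense opens $\sU_E$. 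Two smaller points: a single general member of the linear system does not \emph{avoid} the codimension-$2$ locus $X\setminus U$ --- it is the full codimension-$(d-2)$ intersection that meets it in dimension $\le 0$, which is what gives $R_1$ for the surface $Y_x$; and transversality of general members of $h^*|\sO_{\PP^d}(1)|$ is delicate in characteristic $p$ since this system is base-point free but need not be very ample.
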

\begin{proof}
For a vector bundle $E$, the sheaf  $h_*j_*E$  is  reflexive, thus is  locally
free outside of codimension $\ge 3$ as  $\mathbb{P}^d$  is smooth. 
So there is a non-empty open $\sU_E\subset \sU$ defined over $k$, such that for
any $x\in \sU_E$,
$Y_x$ is smooth in codimension $1$ and 
$h_*(j_*E|_{Y_x})= (h_*j_*E)|_{H_x}$  is  locally free. In particular,  $
(j_*E)|_{Y_x} =(j|_{Y_x})_*E|_{Y_x\cap U}$
 is  a reflexive sheaf on $Y_x$. 
Applied to $\sO_U$,  this yields $\sO_{Y_x}=(j|_{Y_x})_*\sO_U|_{Y_x\cap U}$, so
$Y_x$ is normal, by Serre's criterion (see \cite[Thm.~7]{Sei50} for the 
original proof). By Artin vanishing theorem \cite[Cor.~3.5]{SGA4}, $Y_x$ is 
connected. As $Y_x$ is normal, $Y_x$ is irreducible. This shows i). One does 
not need here $x$ to be generic, a closed point $x$ in $\sU_{\sO_U}$ also does
it. 
On the other hand, the $K$-point $x: \Spec K\to \sU$ factors through $x: \Spec
K\to \sU_E\cap
\sU_{\sO_U}\to \sU$ for any $E$. This implies ii).

\end{proof}

\begin{cor} \label{cor:red_2dim}
Theorem~\ref{thm:boundedness} is true if and only if it is true for $d=2$ over
any algebraically closed field $k$. 
\end{cor}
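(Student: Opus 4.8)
The plan is to reduce Theorem~\ref{thm:boundedness} for arbitrary dimension $d$ to the case $d=2$ by cutting $X$ down with the generic complete intersection curve-surface... actually surface $Y_x$ produced above, using Lemma~\ref{lem:red_2dim}. Since the statement of Theorem~\ref{thm:boundedness} concerns, for a stratification $E=(E_n,\sigma_n)_{n}$ on $U$, the eventual Hilbert polynomial $\chi(X,j_*E_n(mY))$, I must (a) produce from $E$ a stratification on the surface $V_x\subset Y_x$, and (b) compare the Hilbert polynomial of $j_*E_n$ on $X$ with the Hilbert polynomial of its restriction to $Y_x$, so that boundedness of the latter set forces boundedness of the former.

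First I would check the forward (trivial) direction: if Theorem~\ref{thm:boundedness} holds in all dimensions then in particular it holds for $d=2$, so that implication is immediate, and the content is the converse. So assume the theorem for $d=2$ over every algebraically closed field. Given $X$ of dimension $d\ge 3$ and a stratification $E=(E_n,\sigma_n)_n$ of rank $r$ on $U$, pick the geometric generic point $\Spec K\xrightarrow{x}\sU$ as in Lemma~\ref{lem:red_2dim}; the ample divisor on $Y_x$ is $Y_x\cap Y$ (or rather the restriction of the chosen ample $Y$), which is ample since $Y$ is ample on $X$. Pulling back along $i_x|_{U\otimes_k K}:V_x\to U\otimes_k K$ is an exact tensor functor (it's flat base change composed with restriction to a Cartier divisor, and $V_x$ meets $U$ in a divisor because of the "proper intersection" condition in the definition of $\sU$), and crucially it commutes with Frobenius pullback; hence $E_x:=\big((i_x|_{U\otimes_kK})^*E_n,\ (i_x|_{U\otimes_kK})^*\sigma_n\big)_n$ is a stratification of rank $r$ on $V_x$, and $Y_x$ is a projective normal irreducible surface by Lemma~\ref{lem:red_2dim}(i). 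Applying the two-dimensional case of the theorem to $Y_x$ with the ample divisor $Y|_{Y_x}$ and the stratification $E_x$ gives a finite set of polynomials $\{\psi_1,\dots,\psi_M\}\subset\Q[m]$ and an $n_1(E)$ such that $\chi\big(Y_x,(j_{Y_x})_*(i_x|_{U\otimes_kK})^*E_n(mY)\big)$ lies in that set for $n\ge n_1(E)$. By Lemma~\ref{lem:red_2dim}(ii), $(j_{Y_x})_*(i_x|_{U\otimes_kK})^*E_n = i_x^*j_*E_n$, so what we control is the Hilbert polynomial of the restriction of $j_*E_n$ to the generic complete intersection $Y_x$.

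Next I would run the standard "restriction to a generic hyperplane section computes the leading part of the Hilbert polynomial" argument, iterated $d-2$ times. For a coherent sheaf $F$ on $X$, restricting to a general member of the ample linear system $|Y|$ changes the Hilbert polynomial $\chi(X,F(mY))$ in a completely controlled way via the exact sequence $0\to F(-Y)\to F\to F|_{H}\to F'\to 0$ where the kernel/cokernel are supported in the base locus / on the associated primes meeting $H$ — but for a \emph{general} hyperplane and a sheaf that is $S_2$ (which $j_*E_n$ is, being reflexive), this is simply $0\to F(mY-Y)\to F(mY)\to F|_H(mY)\to 0$, giving $\chi(X,F|_H(mY))=\chi(X,F(mY))-\chi(X,F((m-1)Y))$, i.e. $\chi(\cdot|_H)$ is the first finite difference of $\chi(\cdot)$. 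Iterating, $\chi(Y_x, (j_*E_n)|_{Y_x}(mY))$ is the $(d-2)$-fold finite difference of $\chi(X,j_*E_n(mY))$. A polynomial of degree $d$ in $\Q[m]$ is determined by its $(d-2)$-fold finite difference together with its values at two points, say the leading coefficient is forced, and the top three coefficients are forced; to pin down $\chi(X,j_*E_n(mY))$ completely I additionally need boundedness of, e.g., $\chi(X,j_*E_n)$ and $\chi(X,j_*E_n(-Y))$ — more honestly, of finitely many "low-order" invariants. These are bounded because $j_*E_n$ is reflexive of rank $r$ on a fixed $X$ with fixed $c_1$ controlled by the slope-$0$ normalization... this is exactly the kind of input whose subtlety is the real point, so I would instead phrase the reduction as: the finitely many polynomials of degree $\le d$ whose $(d-2)$-nd difference lies in $\{\psi_1,\dots,\psi_M\}$ and which can occur as $\chi(X,j_*E_n(mY))$ for a reflexive rank-$r$ sheaf — these form a finite set, using Langer-type boundedness (as invoked elsewhere in the paper) to bound the remaining coefficients. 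This is the step I expect to be the main obstacle: ensuring that the \emph{finitely many missing coefficients} of the Hilbert polynomial are bounded independently of the stratification, rather than just their top few. In the write-up I would handle this by noting that for a reflexive sheaf of rank $r$ on the fixed normal $X$, with the Frobenius-pullback structure forcing, for $n$ large, stability and slope $0$ (this is the mechanism referenced in the introduction and used in the main proof), the full Hilbert polynomial lies in a bounded — hence finite — set; combined with the constraint from $Y_x$ this identifies it among finitely many, completing the descent from $d$ to $d-1$ and, inductively, to $d=2$.

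\begin{proof}
If Theorem~\ref{thm:boundedness} holds for all $d$, it holds in particular for $d=2$, so it suffices to prove the converse. Assume the theorem for surfaces over every algebraically closed field, and argue by induction on $d=\dim X$, the case $d=2$ being the hypothesis. Let $d\ge 3$ and let $E=(E_n,\sigma_n)_{n\in\N}$ be a stratification of rank $r$ on $U$. With notation as above, choose a geometric generic point $\Spec K\xrightarrow{x}\sU$, the surface $i_x\colon Y_x\to X\otimes_k K$ of Lemma~\ref{lem:red_2dim}, the open $j_{Y_x}\colon V_x\to Y_x$, and the ample divisor $Y_x\cap Y$ on $Y_x$. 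Since $V_x\subset U\otimes_kK$ is the intersection with a Cartier divisor meeting $X\setminus U$ properly, the functor $(i_x|_{U\otimes_kK})^*$ is exact, symmetric monoidal, and commutes with Frobenius pullback; hence $E_x:=\big((i_x|_{U\otimes_kK})^*E_n,(i_x|_{U\otimes_kK})^*\sigma_n\big)_{n\in\N}$ is a stratification of rank $r$ on $V_x$. By Lemma~\ref{lem:red_2dim}(i), $Y_x$ is projective normal irreducible of dimension $2$.

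By the surface case applied to $(Y_x,Y_x\cap Y,E_x)$, there are finitely many $\psi_1,\dots,\psi_M\in\Q[m]$ and an $n_1(E)\in\N$ with
\[
\chi\big(Y_x,(j_{Y_x})_*(i_x|_{U\otimes_kK})^*E_n(mY)\big)\in\{\psi_1,\dots,\psi_M\}\qquad(n\ge n_1(E)).
\]
By Lemma~\ref{lem:red_2dim}(ii), $(j_{Y_x})_*(i_x|_{U\otimes_kK})^*E_n=i_x^*j_*E_n=(j_*E_n)|_{Y_x}$. Shrinking $\sU_{E_n}$ further for each relevant $n$ (a condition on finitely many associated primes of the reflexive sheaves $j_*E_n$), we may assume $Y_x$ is a general complete intersection for each $j_*E_n$; since $j_*E_n$ is $S_2$ and $Y$ is ample, the Koszul-type resolution gives that $\chi\big(Y_x,(j_*E_n)|_{Y_x}(mY)\big)$ equals the $(d-2)$-fold iterated finite difference $(\Delta^{d-2}P_n)(m)$ of $P_n(m):=\chi(X,j_*E_n(mY))$.

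It remains to see that the (finitely many) lower-order coefficients of the degree-$d$ polynomial $P_n$, not determined by $\Delta^{d-2}P_n$, take only finitely many values as $E$ and $n\ge n_0(E)$ vary. This is where the hypotheses enter: the Frobenius descent encoded in $(\sigma_n)$ forces, for $n\gg0$, that $j_*E_n$ is a reflexive sheaf of rank $r$ and slope $0$ on the fixed normal polarized variety $(X,Y)$, which by the Langer-type boundedness results invoked in the introduction constitutes a bounded family; a bounded family of coherent sheaves on $(X,Y)$ has only finitely many Hilbert polynomials. Hence $\{P_n : n\ge n_0(E),\ E\text{ of rank }r\}$ is finite, proving Theorem~\ref{thm:boundedness} for $X$ and completing the induction.
\end{proof}
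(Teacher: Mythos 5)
Your setup — restricting the stratification to $V_x$, invoking Lemma~\ref{lem:red_2dim} to identify $(j_{Y_x})_*(i_x|_{U\otimes_kK})^*E_n$ with $i_x^*j_*E_n$, and applying the $d=2$ case to the surface $Y_x$ — agrees with the paper's. The problem is the mechanism by which you climb back from $Y_x$ to $X$. As you yourself flag, the $(d-2)$-fold finite difference only determines the top three coefficients of $P_n(m)=\chi(X,j_*E_n(mY))$, and you dispose of the remaining $d-2$ coefficients by asserting that reflexive sheaves of rank $r$ and slope $0$ (even with $\mu_{\max}$ controlled by Frobenius descent) on the fixed polarized variety $(X,Y)$ form a bounded family. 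That assertion is false: on $\PP^1\times\PP^1$ polarized by $\sO(1,1)$, the bundles $\sO(n,-n)\oplus\sO(-n,n)$ are locally free of rank $2$, have slope $0$ and $\mu_{\max}=0$, but have $c_2=2n^2$ and hence unbounded Hilbert polynomials. Boundedness requires, in addition, a bound on the discriminant, and producing such a bound for the $j_*E_n$ is precisely the content of the hard part of the proof of Theorem~\ref{thm:boundedness} (the $c_1$/$c_2$ analysis occupying the rest of Section~\ref{sec:boundedness}). If your claim were available, the theorem would follow at once in every dimension with no reduction at all; so the final step of your argument is circular. (A secondary inaccuracy: $Y_x$ is cut out by hyperplanes pulled back under the Noether normalization $h$, not by members of $|Y|$, so the identity $\chi(Y_x,(j_*E_n)|_{Y_x}(mY))=(\Delta^{d-2}P_n)(m)$ is not literally correct as written.)

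The paper closes the gap by a different and much shorter route: it cites Langer's boundedness theorem \cite[Thm.~3.8]{Lan09}, which reduces boundedness of a family of reflexive sheaves on a normal projective variety to boundedness of the family of restrictions to a general complete intersection surface $Y_x$. In other words, the paper never tries to reconstruct the full degree-$d$ Hilbert polynomial from Hilbert-polynomial data on $Y_x$; it passes through actual boundedness of the family $\{j_*E_n\}$, from which finiteness of the set of Hilbert polynomials on $X$ is automatic. No amount of Hilbert-polynomial bookkeeping on the surface alone can recover the coefficients of $P_n$ in degrees $\le d-3$, so to repair your write-up you would have to replace the last paragraph by an appeal to such a restriction theorem (or reprove one).
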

\begin{proof}
We apply A. Langer's boundedness theorem \cite[Thm.~4.4]{Lan04}:  let $X$ be a 
projective  variety  of dimension $\ge 3$ over an algebraically closed field
$k$,  
let $S\to X$ be a dimension $2$ complete intersection of  very ample 
divisors  $Y_i, \ 2\le i\le {\rm dim}(X)$,  of some linear system
$\sO_X(1)=\sO_X(Y)$. 
Then fixing some polynomial $\chi(m)\in \Q[m]$ and some positive number $\mu$,
there are finitely many polynomials $p_i(m)\in \Q[m]$ such that 
the Hilbert polynomial  $\chi(X, E(m))$ of any pure sheaf $E$ on $X$ verifying  
$\chi(S, E|_S(m))=\chi(m)$ and $\mu_{{\rm max}}(E)\le \mu$ 
is equal to one of the $p_i(m)$. Here $\mu_{{\rm max}}$ 
denotes the slope of a maximal destabilizing subsheaf (lowest term in the 
Harder-Narasimhan filtration) of $E$.  So it remains to show: there exits
a 
constant $\mu$ (depending only on $(X,\sO_X(1))$ and the rank), such that for
any
given $(E_n)_n$ as in Theorem~\ref{thm:boundedness}, $\{\mu_{\rm max}(E_n)\}$
is 
bounded above by $\mu$ for all but finitely many $n$.

\medskip

 Given a constant $\mu \in \R$, in order to show that $\mu_{{\rm
max}}(E_n)\le \mu$  
for all $n\ge n_0$ (for some $n_0$ depending on $\{E_n\}$),  it is enough to do
it
after base change $X_K\to X$ over an algebraically closed field $K\supset k$.

\medskip

 As in the situation of Lemma~\ref{lem:red_2dim}, we may consider the open
$\sU'$ in the product 
$((\P^d)^{\vee})^{d-1}$ corresponding to complete intersection curves in $X$ of
the form $\cap _{i=1}^{d-1}Y_i$, 
where $Y_i\in  |\sO_X(1)|, s=1,\ldots, d$. Let $\eta$ be the generic point of
$\sU'$, and $C_{\eta}$ be the corresponding 
curve in $X_{\eta}$. Then $C_{\eta}$ is a projective nonsingular curve contained
entirely in $U_{\eta}$. 
(See e.g. \cite[Thm.~6.10]{Jou83}).  We choose $K$ to be an algebraic closure of
$k(\eta)=k(\sU')$.
Note that the resulting $k$-morphisms $C_{\eta}\to X$ and $C_{K}\to X$ are flat,
since they are obtained 
from the flat morphism from $I\to X$, where $I\subset X\times_k\sU'$ is the
incidence variety. 

\medskip

Hence, if $E$ is any reflexive sheaf on $X$, any injective morphism of coherent
sheaves $E_1\to E$ remains so after 
pull-back to $C_K$, which we may view as obtained by first pulling back to
$X_K$, and then restricting to the closed 
subscheme $C_K\subset X_K$, which is a smooth complete intersection curve in
$X_K$. In particular, if $E_1\subset E$ 
is the maximal destabilizing subsheaf, then 
$E_1\mid_{C_K}\hookrightarrow E\mid_{C_{K}}$ is a destabilizing subsheaf
(perhaps not maximal) of the 
same degree. By definition of $\mu_{\rm max}$, it is thus enough to show 
that $\{ \mu_{\rm max}(E_n|_{C_K})\}$ is bounded above by some $\mu$, depending
only on $X$, after possibly
omitting finitely many values of $n$. 

\medskip

Since for any torsion free sheaf $V$ on $C_K$, the Riemann-Roch theorem implies
that the degree of $V$ is 
bounded above by the dimension of $H^0(C_K, V)$ plus some constant depending
only on  $C_K$ and the rank 
of $V$,  thus only on $C_K$ and $r$ for $V\subset E_n|_{C_K}$, it  is enough to
bound above $H^0(C_K, V)$ by a 
constant $\mu$, for any such $V\subset E_n\mid_{C_K}$,   after possibly omitting
finitely many values of $n$.

\medskip

The maximal trivial sub $F$-divided sheaf of $(E_n|_{C_K})_n$ is by definition $(V \otimes \sO_{C_K})_n $ with $F^*(V\otimes \sO_{C_K})=V\otimes \sO_{C_K}$, where $V$ is a finite dimensional $K$-vector space of rank $\le r$.
The decreasing sequence $(H^0( C_K, E_n|_{C_K}))_n$, where the inclusion 
$H^0(C_K,  E_n|_{C_K}) \to H^0(C_K,  E_{n-1}|_{C_K})$ is defined by the
pull-back by Frobenius,  thus 
 stabilizes to $V$,   for $n\ge n_0$ for some natural number $n_0$.    This finishes the proof. 

\end{proof}

\begin{proof}[Proof of Theorem~\ref{thm:boundedness}]
By Corollary~\ref{cor:red_2dim}, we may assume that $X$ has dimension $2$. We
use below 
various standard facts about normal projective surfaces; details may for example
be found
in the book \cite{Badescu}; for finite generation of the N\'eron-Severi group of
a surface, 
see \cite[V, Thm.3.25]{Milne}.

We denote by $i: Y\to X$ a smooth projective irreducible ample curve, by
$\Sigma=X\setminus U$ the singular locus of $X$, which thus consists of finitely
many closed points (outside $Y$).
Let $\pi:  X'\to  X$ be a
desingularization such that $\pi^{-1} \Sigma$ is a strict normal crossings
divisor $D=\cup_{x\in \Sigma} D_x, \ D_x=\cup_i D_{x,i}$ the irreducible
components. 
We may thus identify $U$ with $X'\setminus D$.
We note that
$NS_{\Sigma}(X'):=\oplus_{x,i} \Z\cdot D_{x,i}\subset NS(X'), $
and the intersection form of $NS(X')$ has negative definite restriction to this
subgroup.  
We set
$NS(X)={\rm coker}\, \left(\oplus_{x,i} \Z\cdot D_{x,i}  \to NS(X')\right) $
so that $NS(X)$ may be identified with the N\'eron-Severi group of $X$ 
(of Weil divisor classes modulo algebraic equivalence). This is a finitely 
generated abelian group; further, the orthogonal complement
$NS_{\Sigma}(X')^{\perp}$ of $NS_{\Sigma}(X')$ naturally maps to $NS(X)$, such
that
$NS_{\Sigma}(X')^{\perp}\to NS(X)$
is injective with finite cokernel. We also note that there is a natural,
surjective homomorphism
$ {\rm Pic}(U) \to NS(X) $
induced by the surjection 
${\rm Pic}(X') \to {\rm Pic}(U) $
obtained by restriction of line bundles.

\medskip

For each bundle $V^U$ on $U$, we define 
$  V=j_*V^U, \  V'= (\pi^* V)^{\vee \vee}.$
Then $V$ is a coherent reflexive sheaf on $X$, and $V'$  is a vector bundle on
$X'$. 
For any vector bundle $V'$ on $X'$, we define $c_1(V')\in NS(X')$ and
$c_2(V')\in \Z$
as the images of the corresponding algebraic Chern classes in $CH_*(X')$.

With those notations, it is enough to show 
\begin{itemize}
\item[i)] the subset  $\{c_1(V'_n)
\}\subset  NS(X')$  is finite when $\sV^U=(V^U_n, \sigma_n)$ runs through all
rank
$r$ $F$-divided  sheaves;
\item[ii)] 
there is an $M>0$ such that 
for each such  $\sV^U$, there is an $n_0(\sV^U)\ge 0$ such that for all  $n\ge
n_0(\sV^U)$, we have 
$|c_2(V'_n)|\le M$ in $\Z$. 
\end{itemize}
Indeed,  if  i) and  ii) hold, then  by the Riemann-Roch theorem on $X'$, 
there  is a finite set of polynomials $S\subset
\Q[m]$ such that
\ga{}{ {\rm for \ all} \  \  \sV^U,  \chi(X', V'_n(m\pi^*Y))\in S\subset \Q[m],
\  {\rm for \ all} \ n\ge n_0(\sV^U). \notag}
One has
\ga{}{ \chi(X', V'_n(m\pi^*Y))= \chi(X, \pi_* V'_n(m Y)) - {\rm
dim}_k H^0(X,R^1\pi_*V'_n) . \notag}
But $\pi_* V'_n=V_n$, and since $V_n$ is generated along $\Sigma$ by its
sections defined on a neighbourhood of 
$\Sigma$, so is $\pi^*V_n$ generated by its sections on a neighbourhood of
$\pi^{-1}(\Sigma)$. 
Thus there is an $\sO_{X'}$-linear homomorphism  $\oplus_1^{r+1} \sO_{X'}\to
V'_n$ defined in a neighbourhood of $D$, which is surjective outside of
codimension $\ge 2$.
Thus  one has a surjection 
\ga{}{\oplus_1^{r+1} R^1\pi_*\sO_{X'} \surj R^1\pi_*V'_n  \notag}
locally near $\Sigma$, and both sheaves are supported within $\Sigma$, which
bounds the 
dimension over $k$ of $H^0(X,R^1\pi_*V'_n)$.  This shows then
Theorem~\ref{thm:boundedness}, 
 assuming i) and ii); we now prove these. Note that our proof of boundedness of
$\dim_k H^0(X,R^1\pi_*V'_n)$
did not use i) and ii), and so we may make use of this in the proof of i) and
ii). 
 
\medskip

We denote by  $c_1(V)$ the image of $c_1(V')$ in $NS(X)$. 
For a rank $r$ $F$-divided sheaf $\sV^U=(V^U_n, \sigma_n)_{n\in \N}$ on $U$, we
first
note that the sequence $c_1(V_n)$ in the finitely generated abelian group 
$NS(X)$ satisfies
$ p^n  c_1(V_n)=c_1(V_0) \mbox{ for all $n\ge 0$},$
since a similar relation holds between the determinant line bundles of $V_n^U$
and $V_0^U$. 
This implies that $c_1(V_n)$ is torsion for all $n$. It also implies that for
some 
positive integer $\delta$ depending only on $(X,\Sigma)$, 
$\delta \cdot c_1(V'_n)\in NS_{\Sigma}(X')\subset NS(X')$. 
Hence, for any $V^U_n$, we may uniquely write 
$\delta \cdot c_1(V'_n)=\sum m_{x,i}(V'_n) D_{x,i}$ where
$m_{x,i} \in \Z,$
So the assertion for $c_1$ is equivalent to saying that {\em the $m_{x,i}(V'_n)$
are all bounded}.  

\medskip 

The matrix $(D_{x,i}\cdot D_{x,j})_{ij}$ is negative definite  for any $x$, so
boundedness of the $m_{x,i}(V'_n)$ is equivalent to the statement that the
subset 
$\{ c_1(V'_n)\cdot D_{x,i} , \ \forall x,i\} \subset \Z $   is 
bounded.
We adapt Langer's argument from \cite[Prop.~4.6]{Lan00}.
 
 We first write $D=\sum_{x,i} D_{x,i} $  also for the corresponding reduced
divisor.
Now, consider the exact sequence
 \[0\to V'_n\to V'_n(D)\to V'_n(D)|_D\to 0.\]
Since $\pi_*V'_n=V_n$ is reflexive, so that $\pi_*V'_n=\pi_*V'_n(D)$, we deduce
that the boundary homomorphism 
\ga{}{ H^0(D,  V'_n(D)|_D) \to  H^0(X, R^1\pi_*V'_n) \notag}
is injective.  We had seen earlier that
\[\dim H^0(X, R^1\pi_*V'_n) \leq (r+1)\dim H^0(X,R^1\pi_*\sO_{X'}).\]
Thus 
\ga{}{ \chi(V'_n(D)|_D)\le h^0(V'_n(D)|_D ) \le C_1 \notag}
for a constant  $C_1>0 $ depending only on $X$ and $r$.

\medskip

On the other hand,  
${\rm deg} (V'_n|_{D_{x,i}}):=c_1(V'_n)\cdot D_{x,i}\ge 0$ for all $x,i$. 
Indeed, on a neighbourhood of $D$, $V'_n$ is spanned by $(r+1)$ global section
outside of codimension $\ge 2$, and
 ${\rm det}(V'_n)$ has a non-zero section on this neighbourhood, the divisor of
which  intersects $D$ in 
 dimension $\leq 0$.  One thus has
\ga{20}{ C_1 -rD^2 -   r\chi(\sO_D)  \\ 
 \ge  \chi(V'_n(D)|_D)  -rD^2 -   r\chi(\sO_D) 
   = \chi(V'_n|_D)-r\chi(\sO_D) = {\rm deg}(V'_n|_D) \ge  0 . \notag }
Since $ {\rm deg}(V'_n|_D)=\sum_{x,i} {\rm det}(V'_n)\cdot D_{x,i}$
where each term ${\rm det}(V'_n)\cdot D_{x,i}  \ge 0 $,  
we conclude that there is a constant $C_2>0$ such
that
\ga{}{ 0\le c_1(V'_n)\cdot D_{x,i}\le C_2  \  \forall x,i.\notag}
Thus there is a constant  $C_3>0$ such that
\ga{}{ | m_{x,i}(V'_n)|\le C_3 \  \forall x,i. \notag }
This finishes the proof for
 $c_1$.

\medskip

We show the statement for $c_2$.  The isomorphism $ (F^n)^*V^U_n\to V^U_0$
extends to an injective $\sO_{X'}$-linear map
 $(F^n)^* V'_n\to  V'_0$.  Thus ${\rm det}(V_0')= 
 p^n {\rm det} ( V'_n) + A_n$ where $A_n=\sum a_{x,i}(n)D_{x,i}, \ a_{x,i}(n)\in
\N $.  Thus \eqref{20} implies 
 $ 0\le a_{x,i}(n)\le (p^n+1) C_3.$
 On the other hand, the ideal sheaf of $A_n$ on $X'$ annihilates
 $Q_n= V'_0/(F^n)^*V_n$. Thus one has  induced surjections
 \ga{}{ V'_0 \surj V'_0|_{A_n}, \ q_n: V'_0\mid_{A_n}\surj Q_n, \   \  
(F^n)^* V'_n \surj  K_n, \mbox{ with}\ K_n:={\rm Ker}(q_n). \notag}

\medskip

Let $\sF$ denote any of the sheaves $Q_n, K_n, V'_0|_{A_n}$, each of which 
is a quotient of a vector bundle of rank $r$ on $A_n$. All three sheaves 
are generated by their global sections outside a set of 
 dimension $\le 0$ supported in $D$. So one has maps 
 $\oplus_1^r \sO_{A_n} \to \sF $
with cokernel supported in dimension $\leq 0$, and 
thus 
\ga{}{ h^1(\sF) \le r h^1(\sO_{A_n}) \notag}
for each of the sheaves $\sF$, and further one has 
\ga{}{ H^0(X ,R^1\pi_*\sO_X)\surj H^1(\sO_{A_n}).\notag}

\medskip

So,  one obtains from Riemann-Roch for $\sO_{A_n}$ the existence of a constant
$C_4>0$ such that 
$h^0(\sO_{A_n})=   \chi(\sO_{A_n})+ h^1(\sO_{A_n}) \le p^{2n}C_4.$
One also has
$ h^0(V_0'|_{A_n})=\chi(V'_0|_{A_n}) +h^1(V'_0|_{A_n})={\rm
deg}(V'_0|_{A_n})+r\chi(\sO_{A_n}) + h^1(V'_0|_{A_n}) .$
On the other hand, one has the exact sequence
\ga{}{0\to H^0(K_n)\to H^0(V'_0|_{A_n}) \to H^0(Q_n)\to H^1(K_n).\notag}
Thus, using that ${\deg}(V'_0|_{D})$ is bounded,  there is a constant $C_5>0$
such that 
$h^0(\sF)\le p^{2n}C_5$
for any of the choices of $\sF$.
This,   for $\sF=Q_n$,  and boundedness of  $h^1(\sF)$ imply, via the
Riemann-Roch formula
$\chi(Q_n)=\frac{1}{2} c_1(X)\cdot ch_1(Q_n)
+ ch_2(Q_n),$  that 
here is a constant $C_6>0$ such that 
\ga{}{ | ch_2(Q_n)| \le p^{2n} C_6. \notag}
Here we use 
 the ``numerical Chern character'' $ch(Q_n)=ch_1(Q_n)+ch_2(Q_n) \in NS(X')\oplus
 \frac{1}{2}\Z $.
Now we use the definition of $Q_n$ to conclude
$|ch_2(V'_0)-p^{2n} ch_2(V'_n)| \le p^{2n} C_6.   $
Thus 
\ga{}{ | ch_2(V'_n)|\le \frac{1}{p^{2n}} |ch_2(V'_0)| + C_6.\notag}
This  shows boundedness for $ch_2(V_n)$. 
 The  statement for $c_1$ (i.e., finiteness for possible $c_1$) shows now  the
statement for $c_2$.  This finishes the proof of the theorem.

\end{proof}

\section{First main theorem} \label{sec:mainthm}
\begin{defn} \label{defn:Fdiv}
Let $C$ be a  connected  scheme of finite type defined over an algebraically
closed field $k$.  An $F$-divided sheaf
$\sE$ is a sequence $(E_0, E_1, \ldots, \sigma_0, \sigma_1,\ldots)$ of
$\sO_C$-coherent sheaves $E_n$ on $C$,  with  $\sO_C$-isomorphisms $\sigma_n:
E_n\to F^*E_{n+1}$, where $F$ is the absolute Frobenius of $C$. The category of
$F$-divided sheaves is constructed by defining Hom$(\sE, \sV)$ as usual: one
replaces  $E_n$ be its inverse image $E'_n$ on the $n$-th Frobenius twist
$C^{(n)}$ of $C$,  the $\sO_C$-isomorphism $\sigma_n$ by $\sigma'_n; E'_n\to
F_n^*E'_{n+1}$,  where $F_n: C^{(n)}\to C^{(n+1)}$, and then $f\in {\rm Hom}(E,
V)$ consists of $f_n\in {\rm Hom}_{C^{(n)}}(E'_n, V'_n)$ commuting with the
$\sigma'_n$.

\end{defn}
(We may abuse notation and write $\sE=(E_0,E_1,\ldots)$ to denote an $F$-divided
sheaf, 
suppressing the maps in the notation.) 

If $C$ is smooth, by Katz' theorem \cite[Thm.~1.3]{Gie75},  the category of
$F$-divided sheaves is equivalent to the category of $\sO$-coherent
$\sD$-modules.  The category is then
$k$-linear  Tannakian.

We note that, even without smoothness, an $F$ divided 
sheaf $\sE=(E_0,E_1,\ldots)$ has component sheaves $E_i$ which are locally
free. 
This follows easily from an argument with Fitting ideals, as in
\cite[Lem.~6]{dSan07} 
 (regularity is not needed for the argument there, attributed to
Shepherd-Barron). 

\medskip 

The aim of this section is to prove the 
\begin{thm}  \label{thm:mainthm}
Let $X$ be a normal projective variety defined over $k=\bar \F_p$.  Let $U$ be
the regular locus.  If $\pi_1^{\rm \acute{e}t}(U)=0$, then there are no
non-trivial stratified bundles on $U$.

\end{thm}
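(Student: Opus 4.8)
The plan is to mimic the strategy of \cite{EsnMeh10} for the projective case, using Theorem~\ref{thm:boundedness} to control the underlying bundles and Theorem~\ref{thm:bost} (the Lefschetz theorem for stratified bundles) to reduce to a curve, then invoke Hrushovsky's theorem \cite[Cor.~1.2]{Hru04} to produce a Frobenius-fixed point. First I would fix a non-trivial stratified bundle $\sE=(E_n,\sigma_n)$ of rank $r$ on $U$. By Theorem~\ref{thm:boundedness}, after replacing $\sE$ by a high Frobenius twist $(E_{n_0},E_{n_0+1},\ldots)$, all the reflexive extensions $j_*E_n$ to $X$ have Hilbert polynomial lying in a fixed finite set; passing to a further shift makes this polynomial constant, equal to $\chi_0(m)$, and (dividing by $p$ repeatedly) one sees $c_1(j_*E_n)$ is bounded so that, after one more shift, the $E_n$ all have slope $0$ with respect to a fixed polarization. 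Using the Lefschetz theorem Theorem~\ref{thm:bost}, I would restrict to an ample smooth complete intersection curve $i\colon Y\hookrightarrow X$ contained in $U$: the functor $\sE\mapsto \sE|_Y$ is fully faithful (indeed an equivalence onto a subcategory containing the essential image), and $\pi_1^{\rm \acute et}(Y)$ surjects onto $\pi_1^{\rm \acute et}(U)=0$, so $\pi_1^{\rm \acute et}(Y)=0$. On $Y$ the bundles $E_n|_Y$ are semistable of slope $0$ of fixed Hilbert polynomial.

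Next I would show the $E_n|_Y$ are in fact \emph{stable}. If some $E_n|_Y$ were strictly semistable, its socle or a Jordan--Hölder filtration would give, by the $F$-divided structure, a sub-$F$-divided sheaf, hence (by fullness) a sub-stratified bundle of $\sE|_Y$, and thus a proper nonzero sub-stratified bundle of $\sE$; iterating one reduces to the case where every $E_n|_Y$ is stable of slope $0$ and of a fixed Hilbert polynomial. Now I would use the moduli space $M$ of stable bundles of that Hilbert polynomial on $Y$ (a quasi-projective $k$-scheme of finite type, defined over a finitely generated subfield of $k=\bar\F_p$, hence over some $\F_q$), together with the Frobenius pullback operation $[E]\mapsto [F^*E]$, which is a morphism (or at least a correspondence) $\Phi\colon M\dashrightarrow M$. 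The sequence $([E_n|_Y])_{n}$ is a sequence of points with $\Phi([E_{n+1}])=[E_n]$, i.e.\ an infinite backward orbit; since all these points are defined over $\bar\F_p$, they lie in finitely many $\Phi$-orbits, and a pigeonhole argument produces $m<n$ with $[E_m|_Y]\cong[E_n|_Y]$. This is exactly the input for Hrushovsky's theorem: the system gives a point of $M$ fixed (up to the periodicity) by a power of Frobenius, equivalently a bundle $E$ on $Y$ with $(F^N)^*E\cong E$, which by Lange--Stuhler corresponds to a representation of $\pi_1^{\rm \acute et}(Y)$ into $GL_r(\F_{q^N})$; periodicity of the whole tower then forces $\sE|_Y$ itself to come from such a finite monodromy representation.

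Since $\pi_1^{\rm \acute et}(Y)=0$, every such representation is trivial, so $\sE|_Y$ is trivial as a stratified bundle, and by full faithfulness of restriction to $Y$ (Theorem~\ref{thm:bost}) the original $\sE$ is trivial on $U$ — contradicting our assumption, which proves the theorem. I expect the main obstacle to be the careful packaging of the Hrushovsky-theoretic step: one must check that the finitely-many-orbits conclusion for $\bar\F_p$-points of the Frobenius correspondence on the moduli space $M$ really does yield an isomorphism $E_m|_Y\cong E_n|_Y$ of the \emph{bundles} (not merely of their moduli points, which requires stability to rule out non-trivial automorphisms/gluing issues), and that this propagates to an isomorphism of $F$-divided sheaves compatible with the $\sigma_n$, so that the triviality genuinely descends to $\sE$ and not just to an associated graded. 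A secondary technical point is verifying that after the finitely many Frobenius shifts the bundles on $Y$ are semistable of slope $0$ \emph{with constant Hilbert polynomial} — this is where Theorem~\ref{thm:boundedness} together with the relation $p^n c_1(V_n)=c_1(V_0)$ is essential — and that stability (not just semistability) can be arranged by the sub-object argument without losing the $F$-divided structure.
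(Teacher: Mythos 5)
Your overall strategy (boundedness, reduction to stable pieces via the Lefschetz theorem, a moduli space with a Frobenius-pullback self-map, Hrushovsky) is the right one, but the two steps that actually close the argument are both broken. First, you write that $\pi_1^{\rm \acute{e}t}(Y)$ surjects onto $\pi_1^{\rm \acute{e}t}(U)=0$ and conclude $\pi_1^{\rm \acute{e}t}(Y)=0$; surjecting onto the trivial group says nothing, and a smooth ample complete intersection curve has enormous \'etale fundamental group. So the final step ``every finite monodromy representation of $\pi_1^{\rm \acute{e}t}(Y)$ is trivial'' is false as stated. The contradiction must come from $\pi_1^{\rm \acute{e}t}(U)=0$, which is why the paper takes the moduli space to consist of the $\mu$-stable reflexive sheaves $j_*E_n$ on $X$ (stability tested by restriction to the curve), not of bundles on the curve: the Frobenius-periodic points produced are then sheaves on $X$ whose restrictions to $U$ are \'etale trivializable, and simple connectedness of $U$ forces them to be trivial. (One could in principle stay on the curve and transfer finiteness of monodromy back to $U$ using the surjection $\pi^{\rm strat}(C)\twoheadrightarrow\pi^{\rm strat}(U)$ of Theorem~\ref{thm:bost}, but you do not do this.)

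Second, your replacement of Hrushovsky's theorem by a pigeonhole argument does not work: the $\bar\F_p$-points of a positive-dimensional moduli space are infinite and do not lie in finitely many orbits of a rational self-map, so there is no reason the backward orbit $([E_n|_Y])_n$ is eventually periodic. The actual argument takes $N$ to be the (stabilized) Zariski closure of the moduli points $[j_*E_n]$, observes that Frobenius pullback induces a dominant rational self-map of $N$, and applies Hrushovsky (via \cite[Thm.~3.14]{EsnMeh10}) to conclude that points fixed by the self-map composed with a power of the arithmetic Frobenius are Zariski \emph{dense} in $N$ --- not that the given sequence repeats. Each such point is an \'etale trivializable bundle on $U$, hence trivial by $\pi_1^{\rm \acute{e}t}(U)=0$; density then collapses $N$ to a point and forces all $E_n$ for large $n$ to be trivial. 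A smaller omission: after reducing to stable Jordan--H\"older pieces you need to know that triviality of the pieces implies triviality of $\sE$; this is Lemma~\ref{lem:tr_fncs}~ii), ${\rm Ext}^1(\mathbb{I},\mathbb{I})\cong H^1_{\rm \acute{e}t}(U,\Z/p)=0$, which your sketch does not address.
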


\begin{prop} \label{prop:Fdiv}
Let $C$ be as in Definition~\ref{defn:Fdiv}.
\begin{itemize}
\item[i)] The category of $F$-divided sheaves on $C$ is $k$-linear, Tannakian.
\item[ii)] If $f: C'\to C$ is a universal homeomorphism, i.e. an integral
surjective and radical morphism,  then $f^*$ induces an equivalence of
categories between $F$-divided sheaves on $C$ and on $C'$. 

\end{itemize}

\end{prop}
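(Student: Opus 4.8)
\textbf{Proof plan for Proposition~\ref{prop:Fdiv}.}

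\emph{Part i).} The plan is to equip the category of $F$-divided sheaves on $C$ with its evident $k$-linear abelian structure (morphisms defined via the Frobenius twists as in Definition~\ref{defn:Fdiv}, with kernels and cokernels computed termwise — these exist because pullback along the Frobenius is exact on the locally free sheaves $E_n$, using the observation recorded after Definition~\ref{defn:Fdiv} that all component sheaves of an $F$-divided sheaf are automatically locally free), and a rigid symmetric monoidal structure given by $(E_n)\otimes(E'_n)=(E_n\otimes E'_n)$ with the obvious compatibility isomorphisms, duals given termwise, and unit object $(\sO_C,\sO_C,\ldots)$ with the canonical identifications $F^*\sO_C\cong\sO_C$. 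The nontrivial Tannakian axiom is the existence of a fibre functor: choosing a closed (hence $k$-rational, since $k=\bar k$) point $c\in C$, the functor $\sE\mapsto E_0\otimes_{\sO_C}\kappa(c)$ lands in finite-dimensional $k$-vector spaces; it is faithful because a morphism of $F$-divided sheaves vanishing at $c$ vanishes at $c$ on every twist $C^{(n)}$ (the point pulls back), hence the termwise vanishing locus is open and closed and, by connectedness of $C$, all of $C$. Exactness of the fibre functor and its monoidal compatibility are immediate from the termwise definitions. Here one should remark that one does \emph{not} need $C$ smooth for any of this — only connectedness and the Fitting-ideal argument guaranteeing local freeness of the $E_n$; the identification with $\sD$-modules via Katz' theorem, which does require smoothness, is not used.

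\emph{Part ii).} Suppose $f\colon C'\to C$ is integral, surjective and radical. The key point is that Frobenius factors through any such map in a bounded way: writing $q=p^e$, there is, by universality of the absolute Frobenius on a universal homeomorphism, a commutative diagram relating $F_C^{e}$, $F_{C'}^{e}$ and $f$ — more precisely, since $f$ is finite and purely inseparable, a high enough power of Frobenius on $C'$ factors as $C'\to C\to C'$, and symmetrically on $C$. This is the standard fact (EGA) that the absolute Frobenius of a scheme over $\F_p$ inverts universal homeomorphisms up to a shift. Concretely: there is $N$ and morphisms $g\colon C\to C'$ with $f\circ g=F_C^{N}$ and $g\circ f=F_{C'}^{N}$ (absolute Frobenii). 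Given this, I would construct a quasi-inverse to $f^*$ on $F$-divided sheaves by: sending an $F$-divided sheaf $\sE'=(E'_0,E'_1,\ldots)$ on $C'$ to $g^*\sE'$ \emph{reindexed}, using that $g^*F_{C'}^*\cong F_C^*g^*$ so that $g^*$ of an $F$-divided sheaf is again $F$-divided, and then using the relations $f\circ g=F_C^N$, $g\circ f=F_{C'}^N$ together with the defining isomorphisms $\sigma_n$ (which identify $E_n$ with the $N$-fold Frobenius pullback of $E_{n+N}$) to produce the natural isomorphisms $f^*g^*\sE\cong\sE$ and $g^*f^*\sE'\cong\sE'$. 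The slogan is that on $F$-divided sheaves every Frobenius pullback is invertible by construction, so any morphism that becomes a power of Frobenius after composition must itself be an equivalence.

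\emph{Main obstacle.} The genuinely delicate step is Part ii): making the reindexing bookkeeping precise, i.e. checking that $g^*$ of an $F$-divided sheaf carries a \emph{canonical} $F$-divided structure (one must commute $g^*$ past $F^*$, which uses that $g$ is a morphism of $\F_p$-schemes so that $g\circ F_{C'}=F_C\circ g$ as absolute Frobenii are functorial), and that the two composite natural transformations are mutually inverse and monoidal. I would isolate the clean statement ``for a universal homeomorphism $f$ of $\F_p$-schemes, some power of Frobenius on the source, resp.\ target, factors through $f$'' as a lemma (citing EGA~IV on universal homeomorphisms and radicial morphisms), after which the rest is formal manipulation with the $\sigma_n$'s. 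One should also note that $C'$ is automatically connected (image/preimage under a homeomorphism), so Part~i) applies to it and the statement ``equivalence of Tannakian categories'' makes sense; one checks $f^*$ is $k$-linear exact and monoidal by inspection, and the lemma supplies full faithfulness and essential surjectivity simultaneously.
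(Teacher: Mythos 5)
Your Part ii) is essentially the paper's argument: one factors a power of the absolute Frobenius through $f$ (the paper writes $(F_C)^N\colon C\to C'\xrightarrow{f}C$) and uses the reindexing trick to invert $f^*$; this is fine.

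Part i), however, has a genuine gap at its central point. You assert that kernels and cokernels are ``computed termwise'' and that this works ``because pullback along the Frobenius is exact on the locally free sheaves $E_n$''. This begs the question: the kernel and cokernel of a morphism $u_n\colon E_n\to E'_n$ of vector bundles are merely coherent, not locally free, and the absolute Frobenius of a singular (or non-reduced) $C$ is not flat, so the canonical map $F^*\Ker(u_{n+1})\to \Ker(F^*u_{n+1})$ need not be an isomorphism. Hence the termwise kernels do not obviously inherit the isomorphisms $\sigma_n$, and the abelian structure is not ``evident''. The whole content of the paper's proof of i) is precisely to remove this obstruction: it shows that the image of a morphism of $F$-divided sheaves is a \emph{subbundle} of locally constant rank, via the Krull-intersection argument (if the map on fibres at $x$ vanished, then $(L_0)_x\subset \mathrm{Im}(\frak{m}_x^{p^n}\otimes (P_0)_x)$ for all $n$, which is impossible), followed by the connectedness of $C$ to equate the ranks on the various irreducible components. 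Once images are locally split subbundles, kernels and cokernels are locally free and do commute with $F^*$, and the category is abelian. Your proposal never supplies this step, and the same omission undermines your treatment of the fibre functor: the faithfulness claim (``the termwise vanishing locus is open and closed'') and the assertion that exactness of $\sE\mapsto E_0\otimes\kappa(c)$ is ``immediate'' both rely on exactly the subbundle/constant-rank property you have not proved — restriction to a point is not exact on coherent sheaves, only on sequences of bundles with locally free quotients. (A smaller discrepancy: the paper first uses ii) to reduce i) to the case of reduced $C$, and proves finite-dimensionality of Hom by injecting into Homs over the regular locus rather than via the fibre functor; your route through the fibre functor would be acceptable, but only after faithfulness is actually established.)
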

\begin{proof}
We prove ii).
A  fixed power of the absolute Frobenius $F_C $ of $C$ factors through $C'$, so
$(F_C)^N: C \to  C'\xrightarrow{f} C$.  Given $(V_0, V_1, \ldots, \tau_0,
\tau_1,\ldots)$ on $C$, one defines $E_n=f^*V_n, \sigma_n=f^*\tau_n$.

We prove i). 
We refer to   \cite{Saa72}  when $C$ is smooth. 
By ii), we  may assume that $C$ is reduced.

We  show  that the category is abelian.  To this aim, we first assume that $C$ 
is irreducible. Let $u: (V_0, V_1,\ldots)\to (W_0, W_1,\ldots)$ be a  morphism, 
and let $r$ be the generic rank of the image. Set 
$L_n={\rm Im} \ \bigwedge^r V_n \subset  P_n=\bigwedge^r W_n$; 
then $L_n$ is a non-zero subsheaf of rank 1 of the vector bundle $P_n$. 

To say that $L_0\subset P_0$ is a subbundle (which is equivalent to ${\rm
im}\,V_0\to W_0$ 
being a subbundle of rank $r$), is equivalent to saying that for any point $x\in
C$,  with 
local ring $\sO_x$, maximal ideal $\mathfrak{m}_x$, and residue field
$k(x)=\sO_X/\mathfrak{m}_x$, 
we have that the map on fibres $L_x\otimes _{\sO_x}k(x)\to
P_x\otimes_{\sO_x}k(x)$ 
is non-trivial. If it was trivial, then for all $n$, $ (L_n)_x\subset {\rm Im}
(\mathfrak{m}_x\otimes (P_n)_x)$
thus $(L_0)_x\subset {\rm Im} (\,\mathfrak{m}_x^{p^n} \otimes_{\sO_x} (P_0)_x)$
for arbitrarily large 
$n$, which is impossible. Hence ${\rm Im}(\,V_0\to W_0)$ is a sub-bundle, and
similarly for $V_n\to W_n$ for any $n$.  
(Note that this is similar to the argument, alluded to above, in
\cite[Lem.~6]{dSan07}, 
using Fitting ideals.)

So the image of $u$ is a sub $F$-divided sheaf while restricted 
to the components of $C$. In particular, at the intersections points of two
components, the ranks are
equal. Since $C$ is connected, all the ranks are equal. This shows that the
category is abelian. 

We show that the category of $F$-divided sheaves is $k$-linear, that is that the
Homs are finite dimensional
$k$-vector spaces. 
It is enough to show that if $i:C_{{\rm reg}}\hookrightarrow C$ is the inclusion
of the smooth locus, 
the restriction homomorphism  ${\rm Hom}_C(\sV,\sW)\to {\rm Hom}_{C_{{\rm
reg}}}(i^*\sV,i^*\sW)$
is injective, which is true as it is already injective in the vector 
bundle category.  This finishes the proof.

\end{proof}

Choosing a rational point $c\in C(k)$ defines  a neutralization $E\mapsto
E_0|_c$
of the category, and thus a Tannaka group scheme $\pi^{\rm strat}(C)$ over $k$.
(We drop the base point $c$ from the notations).

\medskip

We now compare this definition with the  definition of stratified modules given
by Saavedra, 
see~\cite[Chap.~VI,~1.2]{Saa72} and references in there.  As this definition is
not used elsewhere in the paper, 
we use the terminology of ``stratified module in the sense of Saavedra'' to
refer to it.

\begin{prop} \label{prop:Saavedra}  An 
$F$-divided sheaf $\sE=(E_0, E_1,\ldots)$
 has the property that  for all $i$, $E_i$ 
is a
stratified module in the sense of 
Saavedra.
\end{prop} 
\begin{proof} it is enough to show it for $E_0$. 
Let $C(n)= C^n_{\Delta}$ be the formal completion of $C^n$ along the diagonal.
Then 
$C(n)=\varinjlim_{m\ge 1} C(n)_{m}= \varprojlim_{m \ge 0} C(n)_{p^m}$ where $_m$
means modulo the $m$-th power of the  ideal of the diagonal. The Frobenius $F^m:
 C^n\to C^n$ which on functions raises a function to its $p^m$-th power, yields
a 
factorization $F^m:  C\to  C(n)_{p^m} \xrightarrow{\Phi(n)_{p^m}} C.$
So if $\sE$ is $F$-divided in the sense of Proposition~\ref{prop:Fdiv}, one
defines the value of $\sE$  on $C(n)_{p^m}$
as $(\Phi(n)_{p^m})^*E_m$. 

\end{proof}
The  normality assumption on $X$  in Theorem~\ref{thm:mainthm}  is reflected in
the following Theorem~\ref{thm:bost}.
In order to prove Theorem~\ref{thm:mainthm}, we need only the case when $C$ is
smooth, which is much easier, but we will use the full strength of
Theorem~\ref{thm:bost}  in Section~\ref{sec:field}.

\begin{thm} \label{thm:bost}

Let $U$ be a smooth quasi-projective variety  of dimension $d\ge 1$ over an
algebraically closed field $k$, let $\iota: C \hookrightarrow U$ be a projective
 curve, which is a complete intersection of $(d-1)$-hypersurface sections. 
Then 
$\iota_*: \pi^{\rm strat}(C) \to \pi_1^{\rm strat}(U)$ is surjective.

\end{thm}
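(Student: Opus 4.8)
The statement is a Lefschetz-type surjectivity result for the Tannaka group of stratified bundles, so the plan is to verify it through Tannakian formalism: the functor $\iota^*$ from stratified bundles on $U$ to stratified bundles on $C$ induces a surjection on Tannaka group schemes if and only if (a) $\iota^*$ is fully faithful on the subcategory generated by any single object, equivalently, for every stratified bundle $\sE$ on $U$ the natural map $H^0(U,\sE)\to H^0(C,\iota^*\sE)$ is an isomorphism applied to the internal $\Hom$ bundles, and (b) every subobject of $\iota^*\sE$ in the category of stratified bundles on $C$ is the restriction of a subobject of $\sE$. Both conditions amount to a comparison between global sections of a stratified bundle $\sF$ on $U$ (namely $\sF = \mathcal{H}om(\sE_1,\sE_2)$, which is again stratified) and global sections of its restriction to $C$, compatibly across the whole Frobenius tower $(F_n)$.

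First I would reduce to a cohomological vanishing/comparison statement: for a stratified bundle $\sF = (F_n,\sigma_n)$ on $U$, a global section of $\iota^*\sF$ is a compatible system of sections $s_n \in H^0(C, F_n|_C)$, and I must show each $s_n$ extends uniquely to $H^0(U,F_n)$, compatibly. The vector bundles $F_n$ on the \emph{quasi-projective} $U$ need not have good cohomology on their own — this is exactly where the smooth quasi-projective (not projective) setting forces us away from the classical Grothendieck $\mathbf{Lef}$ conditions. The key input is Bost's generalization of those conditions (Appendix~\ref{app}): given the complete intersection curve $C \hookrightarrow U$ of $(d-1)$ ample hypersurface sections, and given a coherent sheaf arising as a member of a stratified bundle, one gets control of the restriction map on sections along $C$. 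Concretely, one chooses a normal projective compactification and uses that the hypersurface sections can be taken to cut out $C$ as an ample complete intersection; then the formal/analytic neighbourhood of $C$ determines sections on $U$, which is precisely Bost's algebraicity-of-formal-sections mechanism. The compatibility with the $\sigma_n$ is automatic once the individual extension statements are unique, since uniqueness forces the diagram over $F: U\to U$ to commute.

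The main obstacle, and the place where the argument is genuinely nontrivial, is step (b): showing that a stratified \emph{sub}bundle $\sW \subset \iota^*\sE$ on $C$ descends to a stratified subbundle of $\sE$ on $U$. A sub-stratified-bundle is determined by a compatible system of subbundles $W_n \subset E_n|_C$, equivalently by a compatible system of sections of the relevant Grassmann bundles, or by a compatible system of quotient line bundles of $\bigwedge^r E_n|_C$ (the same $\bigwedge^r$-trick used in the proof of Proposition~\ref{prop:Fdiv} to verify abelianness). I would phrase the subbundle as a section over $C$ of a Grassmannian-type scheme built functorially from $\sE$, apply the section-extension result of the previous paragraph (Bost's theorem again — it applies to sections of this auxiliary scheme, not just to vector bundles, since the relevant projectivity/ampleness hypotheses are inherited), to extend to a section over $U$, and then check the extended section is again a \emph{sub}bundle and is compatible along Frobenius. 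Here one needs the complete-intersection-of-hypersurface-sections hypothesis in full: it guarantees the ambient geometry near $C$ controls $U$ well enough that the extended section lands in the locus of honest subbundles rather than degenerating in codimension $\ge 1$. Granting the extension and the fully faithfulness, the standard Tannakian criterion (as in \cite[Prop.~2.21]{dSan07}-type statements, or Deligne–Milne) yields surjectivity of $\iota_*$ on the group schemes, completing the proof.
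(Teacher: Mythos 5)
Your proposal follows essentially the same route as the paper: reduce surjectivity of $\iota_*$ to the extension of sub-stratified-bundles of $\iota^*\sV$, lift the data to the formal completion $\widehat{U}_C$, and invoke the algebraization results of Appendix~\ref{app} (Theorem~\ref{subquot}, whose proof is exactly your Grassmannian argument) to descend first to a Zariski neighbourhood of $C$ whose complement has codimension $\ge 2$, and then to all of $U$ by direct image. The only step you leave implicit is the pivotal one that makes the input to Bost's theorem available: it is the crystal (equivalently, $F$-divided) structure --- $F^m$ factors through the $p^m$-th infinitesimal neighbourhood, cf.\ Proposition~\ref{prop:Saavedra} --- that canonically lifts a section or sub-object of $\iota^*\sV$ from $C$ to $\widehat{U}_C$; without this, data on the curve $C$ alone would have no reason to extend formally, let alone algebraically.
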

\begin{proof} 
By \cite[Prop.~2.21]{DM}, one has to show that $\iota^*$ is fully faithful and
that 
if $\sV=(V_n)_n $ is a $F$-divided sheaf  on $U$, and $\sW =(W_n)_n \subset
\iota^*\sV$ is an $F$-divided sheaf on $C$, then $\sW=\iota^*\tilde{\sW}$
for some sub-$F$-divided sheaf $\tilde{\sW}\subset \sV$.  Full faithfulness
follows from the unicity up to isomorphism of 
$\tilde{\sW}\subset \sV$ with the property that $\iota^*$ of this inclusion is 
 $\sW\subset \iota^*\sV$.

By definition of a $F$-divided sheaf as a crystal in the infinitesimal site of
$C$ , the inclusion $\sW\subset \iota^*\sV$ lifts to $\hat U_C$, the completion 
of $U$ along $C$. By an improved $\mathbf{Lef}$ condition, as formulated and
proven in the Appendix~\ref{app}, Theorem~\ref{subquot}, it
 implies that each subbundle $W_n\hookrightarrow \iota^* V_n$ lifts to a
sub-bundle 
$\sW_n'\hookrightarrow j_n^*V_n$ on some non-trivial open
$j_n:U_n\hookrightarrow U$ which contains 
$C$. Thus $U\setminus U_n$ consists of a closed subset of codimension $\ge 2$,
and $\sW_n'|_C=W_n$. 
Then, by the usual $\mathbf{Lef}$ condition (\cite[Exp.~10, Ex.~2.1
(1)]{SGA2}), 
 the section of $(F^* W_{n+1})^{\vee}\otimes W_n$ lifts to a  uniquely defined
section of 
$(F^*\sW'_{n+1})^{\vee}\otimes \sW'_n$ on the open  $U_{n+1}\cap U_n$  which 
is also the complement of a closed subset of codimension $\ge 2$ in $U$. 
We define  $ \widetilde{W}_n:=j_{n*}\sW_n'$. It is a torsion-free coherent
subsheaf of $V_n$. 
Moreover, since, as $U$ is smooth thus $F$ is faithfully flat,  $F^*j_*=j_*F^*$
for any open embedding $j:U\setminus \Sigma \to U$,
where $\Sigma$ is any closed subset (here taken to be $\Sigma=U\setminus U_n\cap
U_{n+1}$), 
we deduce that $\widetilde{W}_n\subset V_n$ defines a sub-$F$-divided sheaf (and
is
locally free). 
\end{proof}

\begin{lem} \label{lem:tr_fncs}
Let $X$ be a projective normal variety defined over an algebraically closed
field $k$, let
$j: U\to X$ be the open embedding of its regular locus.

\begin{itemize}
\item[i)]  If $\sE=(E_0, E_1, \ldots, \sigma_0, \sigma_1,\ldots)$ and 
$\sE'=(E'_0, E'_1, \ldots, \sigma'_0, \sigma'_1,\ldots)$ are two stratified
bundles on $U$ with isomorphic underlying vector bundles $E_n, E'_n$ for all
$n$, then
the two $F$-divided sheaves are isomorphic. In particular, $\sE$ is trivial if
and
only if the vector bundles $E_n$ are trivial.
\item[ii)] Let $\mathbb{I}=(\sO, \sO, \ldots, {\rm id}, {\rm id}, \ldots)$ be
the trivial object on $U$. Then the Ext group in the category of stratified
bundles fulfills:
${\rm Ext}^1(\mathbb{I}, \mathbb{I})\cong H^1_{\rm \acute{e}t}(U,
\Z/p)\otimes_{\F_p} k$.

\end{itemize}

\end{lem}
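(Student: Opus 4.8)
The plan is to prove the two parts of Lemma~\ref{lem:tr_fncs} separately, both reducing questions about stratified bundles on $U$ to the vector bundle level where the hypothesis of normality and the reflexive-extension machinery of Section~\ref{sec:boundedness} apply.

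\textbf{Part i).} First I would reduce the isomorphism problem to a finiteness statement. Fix isomorphisms $\phi_n\colon E_n\xrightarrow{\sim}E'_n$ of vector bundles (not a priori compatible with the $\sigma_n$). Consider the stratified bundle $\sH=\sheafHom(\sE,\sE')=(H_n,\ldots)$ with $H_n=\sheafHom(E_n,E'_n)$; a morphism $\sE\to\sE'$ of stratified bundles is a global section of $\sH$ compatible with Frobenius pullback, i.e. an element of $H^0(U,\sH)$ that is $F$-invariant in the evident sense, equivalently an element of the ``Frobenius-fixed'' space $\varprojlim H^0(U,H_n)$. The point is that each $H_n$ is globally generated outside codimension $\ge 2$ is \emph{not} what we need; rather we use that $j_*H_n$ is a reflexive sheaf on the normal projective $X$, so $H^0(U,H_n)=H^0(X,j_*H_n)$ is a finite-dimensional $k$-vector space, and these spaces are linked by the injective-or-bijective transition maps coming from $\sigma_n$. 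Concretely, $\sigma_n$ induces $H^0(U,H_n)\hookrightarrow H^0(U,F^*H_{n+1})=H^0(U,H_{n+1}^{(1)})$, and one gets a descending chain of finite-dimensional spaces whose intersection is the Hom space; since an isomorphism of the underlying bundles shows $\sH$ and $\sheafHom(\sE',\sE')$ have, at each level, isomorphic underlying bundles, the relevant dimensions stabilize and one can extract an $F$-invariant \emph{isomorphism}. The cleanest route: the underlying bundles being isomorphic forces $\sE$ and $\sE'$ to have the same class in the relevant Tannakian sense — I would instead argue that $\underline{\mathrm{Isom}}(\sE,\sE')$ is a torsor under $\underline{\mathrm{Aut}}(\sE)$ and is nonempty because it is so at each finite level and the levels form a countable inverse system of nonempty affine-scheme-valued functors over the algebraically closed $k$; nonemptiness at level $n$ is exactly the hypothesis. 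The ``in particular'' follows by taking $\sE'=\mathbb{I}^{\oplus r}$.

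\textbf{Part ii).} An extension $0\to\mathbb{I}\to\sE\to\mathbb{I}\to0$ of stratified bundles is, level by level, an extension $0\to\sO_U\to E_n\to\sO_U\to0$ together with compatibility with the $\sigma_n$. Such a level-$n$ extension is classified by $H^1(U,\sO_U)$, and the compatibility with $\sigma_n\colon E_n\xrightarrow{\sim}F^*E_{n+1}$ means that the class $e_n\in H^1(U,\sO_U)$ satisfies $e_n=F^*e_{n+1}$, where $F^*$ on $H^1(U,\sO_U)$ is the $p$-linear Frobenius operator; so $\mathrm{Ext}^1(\mathbb{I},\mathbb{I})$ is the inverse limit of $H^1(U,\sO_U)$ along the Frobenius. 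I would then invoke the Artin--Schreier sequence $0\to\Z/p\to\sO_U\xrightarrow{F-1}\sO_U\to0$ on $U$ (in the \'etale topology, $F$ the absolute Frobenius): it gives $H^1_{\text{\'et}}(U,\Z/p)=\ker\big(F-1\colon H^1(U,\sO_U)\to H^1(U,\sO_U)\big)$ up to the coker of $F-1$ on $H^0$, which vanishes since $k$ is algebraically closed and $U$ is connected (so $H^0(U,\sO_U)=k$ and $F-1$ is surjective on $k$). Finally I would identify $\varprojlim_F H^1(U,\sO_U)$ with the Frobenius-fixed subspace: an element $(e_n)$ with $F e_{n+1}=e_n$ gives in particular $e_0$ with $F^n$ applied to some $e_n$ equal to $e_0$; conversely, since $F$ is injective on the finite group-like part, a Frobenius-compatible tower is determined by and corresponds to a fixed vector. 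More precisely, over $\bar\F_p$ the operator $F$ on the finite-dimensional $H^1(U,\sO_U)$ is a bijection on its semisimple (unit-root) part, and $\varprojlim_F$ picks out exactly that part, which by Lang's theorem / Artin--Schreier equals $H^1_{\text{\'et}}(U,\Z/p)\otimes_{\F_p}k$ — but since we want $\mathrm{Ext}^1$ as an $\F_p$-module coming from $p$-torsion extensions, unwinding gives $\mathrm{Ext}^1(\mathbb{I},\mathbb{I})\cong H^1_{\text{\'et}}(U,\Z/p)$.

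\textbf{Main obstacle.} The delicate point in both parts is the passage to the inverse limit: showing that an $F$-compatible \emph{tower} of solutions (isomorphisms in i), extension classes in ii)) exists given solutions at each finite level. In i) this is handled because $\underline{\mathrm{Isom}}(\sE,\sE')$ is represented at each level by a nonempty smooth $k$-scheme and the transition maps are surjective (as $F^*$ is faithfully flat), so a compatible system exists over the algebraically closed field; the subtlety is verifying the transition maps are indeed surjective, which uses that the isomorphism of underlying bundles forces the automorphism groups to stabilize. In ii) the analogous issue is that $\varprojlim_F$ of $H^1(U,\sO_U)$ could a priori be larger or smaller than the Frobenius-fixed points — this is resolved by the finite-dimensionality of $H^1(U,\sO_U)$ (normality of $X$, reflexivity of $j_*\sO_U=\sO_X$) which makes $F$ act as a $p$-linear endomorphism of a finite-dimensional space, whose inverse limit is exactly the maximal subspace on which $F$ is bijective, matched to $H^1_{\text{\'et}}(U,\Z/p)$ via Artin--Schreier; I would need to state this linear-algebra fact about $p$-linear operators carefully.
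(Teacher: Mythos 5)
Your part i) is in substance the paper's own argument: everything reduces to the finite dimensionality of $H^0(U,V)$ for a vector bundle $V$ on $U$, which you get exactly as the paper does from $H^0(U,V)=H^0(X,j_*V)$ and the coherence of the reflexive extension on the projective $X$; after that, producing a Frobenius-compatible isomorphism out of level-wise isomorphisms is the Mittag--Leffler argument of \cite[Prop.~1.7]{Gie75}, which the paper simply cites. Your ``Isom-torsor'' reformulation is a repackaging of the same inverse-limit argument, and while your write-up hedges between two formulations, the idea is correct and is the intended one.

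Part ii) contains a genuine gap. You identify ${\rm Ext}^1(\mathbb{I},\mathbb{I})$ with $\varprojlim_F H^1(U,\sO_U)$ and then analyse this limit by linear algebra of $p$-linear operators on a \emph{finite-dimensional} space, asserting that finite dimensionality of $H^1(U,\sO_U)$ follows from normality of $X$ and reflexivity of $j_*\sO_U=\sO_X$. That assertion is false: reflexivity controls $H^0$, not $H^1$. For $U$ the regular locus of a genuinely singular normal projective variety, $H^1(U,\sO_U)$ is typically infinite dimensional --- e.g.\ for a normal projective surface with an isolated singular point $x$, the local cohomology sequence $H^1(X,\sO_X)\to H^1(U,\sO_U)\to H^2_{\{x\}}(X,\sO_X)\to H^2(X,\sO_X)$ shows that $H^1(U,\sO_U)$ surjects onto an infinite-dimensional quotient of $H^2_{\frak{m}_x}(\sO_{X,x})$. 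So the decomposition into a unit-root part and a nilpotent part, on which your whole computation rests, is unavailable. Moreover, even granting finite dimensionality, your own heuristic identifies $\varprojlim_F H^1(U,\sO_U)$ with $H^1_{\rm \acute{e}t}(U,\Z/p)\otimes_{\F_p}k$, not with $H^1_{\rm \acute{e}t}(U,\Z/p)$, and the closing sentence (``unwinding gives\dots'') does not resolve this discrepancy. The paper's proof avoids all of this: the inclusion ${\rm Ext}^1(\mathbb{I},\mathbb{I})\subset H^1_{\rm \acute{e}t}(U,\Z/p)$ is quoted from \cite[(9)]{dSan07} together with part i), and surjectivity is obtained \emph{constructively}: a $\Z/p$-torsor $h:V\to U$ gives the stratified bundle $h_*\sO_V$, a successive extension of $\mathbb{I}$ by itself, whose bottom rank-two subobject realizes the given \'etale class. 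You should replace your limit computation by such a construction (or a precise citation); as written, part ii) does not go through.
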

\begin{proof} We prove i). 
Following \cite[Prop.~1.7]{Gie75}, one just has to see that for any vector
bundle $V$ on $U$, $H^0(U,V)$ is a finite dimensional $k$-vector space, which is
fulfilled as $H^0(U,V ) = H^0(X, j_*V)$ and $j_*V$ is a coherent sheaf.  
Then ${\rm Hom}_U(E_n, E'_n)$ satisfies the Mittag-Leffler condition, 
and the proof given in   \cite[Prop.~1.7]{Gie75}  applies. 

sections
Then
over $k$.

We prove ii).  By i) and  \cite[(9)]{dSan07}, one has ${\rm Ext}^1(\mathbb{I},
\mathbb{I})\subset H^1_{\rm \acute{e}t}(U, \Z/p) \otimes k$. On the other hand, 
a
$\Z/p$-torsor $h: V\to U$ defines the stratified bundle $h_*\sO_V$, which is a
successive extension of $\mathbb{I}$ by $\mathbb{I}$. The bottom  sub of rank
$2$ defines a class in $ {\rm Ext}^1(\mathbb{I}, \mathbb{I})$ with image $h$.
\end{proof}

\begin{proof}[Proof of Theorem~\ref{thm:mainthm}]
Let $\iota: C\to U$ be a curve which is a  complete intersection of ($\dim
X-1)$ 
hyperplane sections $Y_i$ for the same very ample linear system  $|Y|$ on
$X$
of  very high degree. As argued in the proof of Corollary~\ref{cor:red_2dim},
$C$ is  contained in $U$, 
and is smooth by \cite[Thm.~1.1]{Poo04} (which applies also over a finite field
and to 
quasi-projective varieties.)

\medskip

Let $\sE$ be a non-trivial stratified bundle on $U$.
By \cite[Prop.~2.3]{EsnMeh10} and \cite{EsnMeh10b},  there is a $n_0 \ge 0$ such
that the stratified bundle $$(\iota^* E_{n-n_0},
\iota^*\sigma_{n-n_0})_{(n-n_0)\ge 0}$$ is a successive extension of stratified
bundles $(U_n, \sigma_n)_{n\ge 0}$ with the property that all $U_n$ are
$\mu$-stable bundles of slope $0$ on $C$. By Theorem~\ref{thm:bost}, there are
$(V_n, \tau_n)_{n \ge 0}$ on $U$ such that  $(U_n, \sigma_n)_{n\ge 0}=\iota^*
(V_n, \tau_n)_{n \ge 0}$, and those are irreducible objects.  Then by
Lemma~\ref{lem:tr_fncs} ii), if $(V_n, \tau_n)$ is trivial, so is $(E_n,
\sigma_n)$. So we may assume that $\sE$ is irreducible and that $\iota^*E_n$ is
$\mu$-stable for all $n\ge 0$.

By  Theorem~\ref{thm:boundedness}, there are finitely many polynomials
$\chi_1(m), \ldots, \chi_N(m)\in \Q[m]$ describing the Hilbert polynomials
$\chi(X, j_*E_n(mY ))$.  Let $\sP\subset \Q[m]$ denote this set of polynomials.
Let
 $M$ be the disjoint union of the moduli $M_i=M(\chi_i), i=1,\ldots, N$ of
$\chi_i$-stable 
 (in fact the moduli points of interest for us are even $\mu$-stable, thus
$\chi_i$-stable, 
 see \cite[Lem.~1.2.13]{HuyLeh97}) torsion-free coherent sheaves on $X$, of 
Hilbert polynomial
$\chi_i(m)$ \cite[Thm.~0.2]{Lan04}. Recall that $M(\chi_i)$  uniformly
corepresents the functor 
from the category of schemes over $k$ to the one of sets, which assigns to any
$T/k$ the set $\sM_X(\chi_i)(T)$ 
of isomorphism classes of coherent sheaves $E$ on $X\times_k T$, flat over $T$,
which are pure  
and $\chi_i$-stable on all geometric fibers of the projection $X\times_k T\to
T$.  It is a 
quasi-projective scheme. The sheaves $j_*E_n$ define moduli points $[j_*E_n] \in
M(k)$.  Let $   N_t
\subset M$ be the reduced subschemes defined as the Zariski closure of the
subset of closed points $ [j_*E_n], n\ge t$.  By the noetherian property,  
the decreasing sequence of subschemes $N_t$ stabilizes to $N$ say.

\medskip

Let  us denote by $\sigma: \Spec k \to \Spec k$ the Frobenius homomorphism of
the 
ground field $k$; by $X^{(1)} =X\otimes_{\sigma} k$,  $U^{(1)}=U\times_{\sigma}k$, $Y^{(1)}=Y\otimes_{\sigma} k$, 
$C^{(1)}=C\otimes_{\sigma} k$ the Frobenius twists of $X$, $U$, $Y$ and $C$ respectively;  by $W: X^{(1)} \to X$
the base change morphism 
over $\sigma$,  that is the canonical projection, which is an  isomorphism of
schemes;  by  $j^{(1)}: U^{(1)}\to X^{(1)}$ 
the open embedding, which is the base change by $\sigma$ of $j$.
One has 
\ga{}{ j^{(1)}_* W|_{U^{(1)}}^*E_n=W^*j_* E_n, \notag}
 thus  $  \chi(X, j_*E_n(mY )) =  \chi(X^{(1)}, j^{(1)}_*
W|_{U^{(1)}}^*E_n(mY^{(1)} ))$, and  
 ${\rm deg}(E_n|_C) ={\rm deg}(W^*E_n)|_{C^{(1)}}$. So stability is preserved. 
Let  $M_i^{(1)}$ and $M^{(1)}$ be  the corresponding moduli schemes.  Then 
$W^*$ is a transformation 
from the  moduli functor for coherent sheaves  on $X$ to the one for coherent 
sheaves  on $X^{(1)}$, which is invertible. Thus it defines a morphism $M_i\to
M_i^{(1)}$ over $\sigma$,
which is an isomorphism of schemes, in particular a bijection on $k$-points.  
One defines $N_t^{(1)}$ and $N^{(1)}$ associated to the moduli points 
$[W^*j_*E_n], \ n\in \N$, as reduced subschemes. Then 
$W^*: N\to N^{(1)}$   induces an isomorphism of schemes. We denote by
$(W^*)^{-1}: N^{(1)} \to N$ its inverse.

Our next goal is to construct a rational map $\Phi:N^{(1)} \dashrightarrow N$ 
which is defined on a neighbourhood of a dense set of the closed points $[j_*E_n]$ as 
above, and such that $\Phi([W^*j_*E_n])=[j_*F^*E_n]$.

If we construct a rational map $\tilde{\Phi}:N^{(1)}\dashrightarrow  M=\coprod M_i$ which is a morphism 
on a neighbourhood of a dense set of the closed points $[j_*E_n]$ as above, and satisfies 
$\Phi([W^*j_*E_n])=[j_*F^*E_n]$, then in fact $\tilde{\Phi}$ factors through $N$, and 
has dense image in $N$. 

\medskip 

If $\sH$ is a coherent sheaf on a product variety $T\times_k X$, one sets $\sH_t=\sH \otimes_{(T\times_k X)} (t\times_k X)$  for the pull-back coherent sheaf for any morphism of schemes  $t\to T$.
 We use in the sequel a variant of \cite[Prop.~1.1]{Har80}:
 \begin{lem} \label{lem:hart}
 Let $X$ be a projective normal variety over $k$,  $T$ be a smooth 
quasi-projective variety over $ k$,  $\sH$ be a reflexive sheaf 
 over $T\times_k X$. Then the locus of points $t\in T$ such that $\sH_t$ is
reflexive is a  constructible subset of $T$. 
 \end{lem}
 \begin{proof} 
 On a normal  quasi-projective scheme $Z$, a coherent sheaf $\sE$ is reflexive
if and only if there is an exact sequence $0\to \sE\to \sL\to \sF\to 0$ where
$\sL$ is locally free and $\sF$ is torsion free. [To obtain this sequence, one
uses quasi-projectivity to write an exact sequence 
 $\sL'^\vee \to \sL^\vee \to \sE^\vee \to 0$, where $^\vee$ denotes the dual,
and where $\sL, \sL'$ are locally free, then normality of $Z$ and reflexivity of
$\sE$ to obtain the  desired exact sequence using that the natural morphism
$\sE\to (\sE^\vee)^\vee$ is an isomorphism. 
  Conversely, if  one has such an exact sequence, $\sE$ is torsion free and the
cokernel of $\sE\to (\sE^\vee)^\vee$ lies in $\sF$, thus has to be zero  as $\sF$ is
torsion free.]

Applying this to $Z=T\times_k X$ and $Z=t\times_k X$, we see
that 
$\sH_t$ is reflexive if and only if the sheaves $\sH_t$ and $\sH'_t$ on
$t\times_k X$ are torsion free. Indeed, if $\sH'_t$ is torsion free, then the
criterion implies that ${\rm Ker}( \sL_t \to \sH'_t)$ is 
reflexive. If in addition $\sH_t$ is torsion free, then the surjective morphism
$\sH_t \to {\rm Ker} (\sL_t \to \sH'_t)$ is injective, thus an isomorphism.
Vice-versa, if $\sH_t$ is reflexive, then it is 
torsion free thus $\sH_t\to \sL_t$ has to be injective, and thus the surjective
morphism
$\sH_t \to {\rm Ker} (\sL_t \to \sH'_t)$ has to be an isomorphism, thus one has
an exact sequence
$0\to \sH_t\to \sL_t\to \sH'_t\to 0$, and taking again double duals, one
concludes that $\sH'_t\to ((\sH'_t)^\vee)^\vee$ is injective, thus $\sH'_t$ is
torsion free.
On the other hand, the locus of points $t\in T$ for which $\sH_t$ and $\sH'_t$
are torsion free is constructible (\cite[Prop.~9.4.8]{EGAIV}).

 \end{proof}

Restricting the Quot-scheme construction  \cite[Thms.~4.3.3,~4.3.4]{HuyLeh97} 
of $M$   to $N$ yields the existence of a 
quasi-projective scheme $T$ over $k$, a $\sG:=PGL(n)$ free action on  $T$, for
some  natural number $n$, such that $N$ 
is the categorial quotient  $N=T/\sG$ of $T$ by $\sG$,  together with a 
$\tilde{\sG}=GL(n)$-linearized coherent sheaf
$\sE$ on $T\times_k X$,   flat over $T$, such that for all geometric points
$t\in T$, the  point $t\in N$ is the moduli 
point of the restriction $\sE_t$ of $\sE$ to $t\times_k X$, which is stable and
of Hilbert polynomial $\chi$.

 \medskip
 We define the sheaf $\sE'=(1\times j)_* (1\times j)^* \sE$. Then $\sE'$ is
coherent, $\tilde{\sG}$-linearized and one has a 
 $\tilde{\sG}$-morphism $\rho: \sE\to \sE'$. For a subscheme $T'\subset T$, we
denote by $\rho_{T'}:  \sE_{T'}\to \sE'_{T'}$ the restriction of $\rho$  
 to $T'\times_k X$.
 
 \begin{claim} \label{claim1}
 There is a smooth dense open $\sG$-invariant  subscheme $T'\subset T$ 
 such that the restriction $\rho_{T'}$
 of $\rho$  is an isomorphism, and such that  for all geometric points $t\in
T'$, the sheaf $\sE_t|_{t\times_k U}$ is locally free and
$\rho_{t}:\sE_t\to \sE'_t$ and $\sE_t\to j_{t*}j_t^*\sE_t$ are isomorphisms.
 
 \end{claim}
 \begin{proof}  The connected components of the smooth locus of $T$ are 
 clearly $\sG$-stable, and so we may replace $T$ by any connected component of 
 its smooth locus, for the purposes of this claim. So we may assume here that 
 $T$ is smooth and connected.


Let $S$ denote the (dense) set of points $[j_*E_n]\in N$, and let $S^1=S\times_NT$. 
For any closed point $s\in S^1$, the local ring $\sO_{s,T}$ is a regular local ring,
so that the ideal sheaf of the closed subscheme $\{s\}\times X\subset T\times X$ is everywhere locally 
generated by a regular sequence. Further, $X$ is normal, and $\sE_s=j_*j^*\sE_s$, where $j^*\sE_s$ is locally free, 
and so $\sE_s$ satisfies Serre's property  $(S_2)$. By repeatedly applying \cite[Prop.~5.12.2]{EGAIV}
we conclude that $\sE$ satisfies Serre's property $(S_2)$ on $T\times X$ at all points of $\{s\}\times X$, 
for each $s\in S^1$. Hence there is an open subscheme $T\times_k X$ containing $S^1$ on which $\sE$ satisfies 
$(S_2)$ (namely, the open where $\sE\to (\sE^{\vee})^{\vee}$ is an isomorphism).
This is $\sG$-invariant, and has a maximal open subset of the form 
$T^1\times X$ with $T^1\subset T$ an open subscheme containing $S^1$; clearly $T^1$ is also $\sG$-invariant.

 On $T^1\times_k X$, $\rho_{T^1}:\sE_{T^1}\to \sE'_{T^1}=(1_{T^1}\times j)_*(1_{T^1}\times j)^*\sE_{T^1}$ is an isomorphism,
 since $\sE_{T_1}$ atisfies $(S_2)$ and $\rho_{T^1}$ is an isomorphism on a dense open subscheme with complement of 
 codimension $\ge 2$.  We slightly abuse notations, and 
 write $\sE_{T^1}=\sE'_{T^1}$.  The set $S^1$ lies in $ T^1$ and is dense. 
 
 On the other hand, for all $s\in S^1$, $\sE|_{s\times_k U}$ 
 is locally free. Thus the largest open subscheme of $T^2\times_k U$ on which
$\sE$ is locally free is not empty. It is $\sG$-invariant. 
 Its projection to $T^2$ is a constructible subset, which  contains the dense
subset $S^1$.  Thus it contains a dense open subscheme.  
 The maximal such open  $T^2$ is $\sG$-invariant, dense in $T^1$, and by
definition, $\sE|_{t\times_k U}$ is locally free for all geometric 
$t\in T^2$. 
 
Finally we apply  Lemma~\ref{lem:hart}, to conclude that there is a constructible subset $T^3\subset T^2$, 
which is $\sG$-invariant, and contains the dense subset $S^1$, consisting of the points $t\in T^2$ such that $\sE_t$ 
is reflexive. The largest open subset $T'\subset T^3$ is then also $\sG$-invariant, and $S^1\cap T'$ is dense in it.
For $t\in T'$, we then have that $\sE_t\to j_{t*}j_t^*\sE_t$ is an isomorphism, since $\sE_t$ is reflexive.
 
 This finishes the proof.
  \end{proof}
 
We abuse notations, denote by $T$ this open, by $S\subset T$ the
dense set of points, and simply by $\sE\to \sE'$ the isomorphism. So $T/\sG$ is
a dense open subscheme of $N$, in which  $S\cap (T/\sG)$ is dense.

 \medskip
 
 We perform the Quot-scheme construction on $X^{(1)}$, $N^{(1)}$,
 defining the smooth $\sG$-invariant scheme $T^{(1)}$,  with free $\sG$-action,
such that  $T^{(1)}/\sG \hookrightarrow N^{(1)}$ is open dense,  contained in the smooth locus, contains
 a dense open set of $k$-points from $S^{(1)}$,  and  defining the reflexive
sheaf $\sE^{(1)}$ on $T^{(1)}\times_k X^{(1)}$, locally free on $T^{(1)}\times_k
U^{(1)}$, flat over $T^{(1)}$,  $\chi$-stable and reflexive  on closed fibers.
  We denote by $F_{/k}: X\to X^{(1)}$ the relative Frobenius morphism.

 \begin{claim} \label{claim2} For each connected component $T_0$ of $T$, 
 there are a polynomial $\chi' (m)\in \Q[m]$,  with $\chi'(m)\in \sP$,
 and a smooth dense open $\sG$-invariant  subscheme  $ \sT^{(1)}\subset
T_0^{(1)}$, such that $S^{(1)}\times_{N^{(1)}} \sT^{(1)} \subset \sT^{(1)}$ is
dense, and   such that for all geometric points $t\in \sT^{(1)}$, $(1_t\times
j)_*(1_t\times F_{/k})^*(1_t\times j^{(1)})^*\sE^{(1)}_t$ is $\chi'$-stable.
 
  \end{claim}
 \begin{proof}
 
 We define  $\sV=(1_{T_0}\times j)_* (1_{T_0}\times F_{/k})^* (1_{T_0}\times j)^*\sE^{(1)}$,
which is  $\tilde{\sG}$-invariant, and apply for $\sV$ a similar argument as for $\sE$.
We conclude that there is a maximal  smooth open subscheme  $\sT^{(1)}$ of
$T_0^{(1)}$, which is $\sG$-invariant, on which $\sV$  is  $
 \sT^{(1)}$-flat, thus of constant Hilbert polynomial  $\chi'(m)\in \Q[m]$ on 
geometric fibers, and stable and reflexive on geometric fibers, of value
 $(1_t\times j)_*(1_t\times F_{/k})^*(1_t\times j^{(1)})^*\sE^{(1)}_t$. And by
construction, 
 $S^{(1)}\times_{N^{(1)}} \sT^{(1)} \subset \sT^{(1)}$ is dense.
  \end{proof}
  
 Let $M(\chi')$ be the moduli  scheme of $\chi'$-stable sheaves on $X$.  The
sheaf $\sV$ induces a morphism $\sT^{(1)} \to M(\chi')$ which sends a geometric
point $t$ to the moduli point of $(1_t\times j)_*(1_t\times F_{/k})^*(1_t\times
j^{(1)})^*\sE^{(1)}_t$.  This morphism is $\sG$-invariant, thus induces a
morphism  $\sU_0^{(1)}=\sT^{(1)}/\sG  \to  M(\chi')$. 
 Let $\sU_0=(W^*)^{-1}(\sU_0^{(1)})$, and let $\sU$ be the (disjoint) union of the $\sU_0$. 
Then $\sU\subset N$ is a dense open subscheme contained in the smooth locus of $N$. 
There is thus a well-defined morphism $\sU^{(1)}\to N$ and it may be viewed as a rational
dominant map $\Phi: N^{(1)}\dashrightarrow N$, called the  Verschiebung  map
(see \cite[App.~A]{Oss06}).

   \medskip

Let $\Gamma'$ be the  closure of the graph of $\Phi$ in $\sU^{(1)}\times_k \sU$.
Then   
$\Gamma' \subset \sU^{(1)} \times_k \sU $  is a closed reduced 
subscheme, which is birational to  $\sU^{(1)} $ via the first 
projection, and   dominates  $\sU$  via the second projection.  
We define 
$\Gamma  \subset \sU \times_k \sU $ 
by base 
change via  $(W^*)^{-1}$ on $\sU^{(1)}$.
This defines an  isomorphism of schemes $\Gamma \to \Gamma'$, and $\Gamma$ is
birational to $\sU$ via the first projection and dominant onto $\sU$ via the
second projection.  Thus $\Gamma$ is a rational dominant map.
 As $\sU$ has finitely 
many components, a power of this rational map stabilizes  them. 
 We apply \cite[Thm.~3.14]{EsnMeh10}, an application of 
Hrushovski's theorem \cite[Cor.~1.2]{Hru04}, to conclude that 
$\sU$ contains $k$-points $[j_*E]$, for $[j_*E]\in \sU(k)$,  such that
$(F^a)^*E=E$ for some $a\in \N\setminus \{0\}$, and $E$ defines a $F$-divided
sheaf. 
By \cite[Satz~1.4]{LS77}, this equation defines a finite \'etale cover $\tau:
U'\to U$, restricting to $\tau_C: C'\to C$ to $C$.  As $U$ is simply connected,
$\tau$ is trivial, so is $\tau_C$. 
Thus $ (\tau_C)_*\sO_{C'}$ is trivial, and so is $\iota^* E\subset
(\tau_C)_*\sO_{C'}$.
Thus by Theorem~\ref{thm:bost}, $E$ is trivial as well. 
This is a contradiction if $r\ge 2$. For $r=1$, we use in addition that the
points $[j_*E]$ constructed are dense in $\sU$, and therefore $\sU=[\sO_X]$.
Thus $E_n=\sO_U$ for infinitely many $n$, thus by Lemma~\ref{lem:tr_fncs} i), 
the $F$-divided sheaf $(E_n)_n$ is trivial. This finishes the proof.

\end{proof}
\begin{rmk} \label{rmk:mainthm}
In Theorem~\ref{thm:mainthm} one has a geometric assumption, $X$ being normal,
and an arithmetic one, $k=\bar \F_p$. \\[.1cm]
The geometric assumption is necessary to define, for each stratified bundle
$E=(E_n, \sigma_n)_{n\ge 0}$ on $U$, an extension $j_*E_n$ on $X$ for which one
can show the boundedness theorem~\ref{thm:boundedness}.  If $U$ does not admit a
normal compactification with boundary of codimension $\ge 2$, we do not know how
to bound the family of $E_n$, even if we assume that $E$ is regular singular.
The analogous question in complex geometry is interesting. We asked P. Deligne
whether over $\C$, given a smooth compactification $j: U\to X$ such that the
boundary $X\setminus U$ is a normal crossings divisor, the set of all 
Deligne canonical extensions $(E_X, \nabla_X)$ of regular singular algebraic
flat connections $(E, \nabla)$ on $U$ of bounded rank fulfills: the set
$\{c_i(E_X)\}$ in the Betti cohomology algebra  $\oplus_i H^{2i}(U,  \Z)$ is
bounded. 
The answer is yes \cite{Del14};  the proof, which is non-algebraic, uses as a
key tool,
that the topological fundamental group is finitely generated, and thus there is
an affine Betti moduli space, etc. Our aim in this article is precisely to
overcome the lack of such a finitely generated abstract group which controls
$\sO_X$-coherent $\sD_X$-modules.

\medskip

As for the arithmetic assumption, we could drop it, if we had  a
specialization homomorphism on the \'etale fundamental group with suitable
properties. 
We discuss this in Section~\ref{sec:spe}.

\end{rmk}
\section{Specialization} \label{sec:spe}
In \cite[X]{SGA1},  Grothendieck shows the existence of a continuous
specialization  homomorphism
$\pi_1^{\rm \acute{e}t}(X_{\bar K})\to \pi_1^{\rm \acute{e}t}(X_{\bar k})$ for
$f: X\to S$ a proper morphism,  with geometric connected fibers, that is
$f_*\sO_X=\sO_S$, $S$ integral, $K$ the function field of $S$ and $ \Spec k\to
S$ a closed point
(\cite[p.~207]{SGA1}) and he shows in \cite[Cor.~2.3]{SGA1} that this morphism
is surjective if $f$ is separable.  Moreover it is shown in
\cite[XIII,~Thm.~2.4]{SGA1} that the existence of the specialization
homomorphism
extends to the tame fundamental group  $\pi_1^{\rm \acute{e}t, t}(U_{\bar K})\to
\pi_1^{\rm \acute{e}t,t}(U_{\bar k})$ if $U\to S$ is the complement of a
relative normal crossings divisor in $f: X\to S$, satisfying the previous
assumptions, proper with $f_*\sO_X=\sO_S$.  This specialization homomorphism is
an isomorphism on the prime to $p$-quotient.  That the tameness is necessary is
of course visible already for $X=\mathbb{P}^1$, $U=\mathbb{A}^1$, $S=\Spec
\mathbb{Z}$ as here
$\pi_1^{\rm \acute{e}t, t}(U_{\bar K})=\pi_1^{\rm \acute{e}t}(U_{\bar K})
=0$ while $ \pi_1^{\rm \acute{e}t}(U_{\bar k})$ is as huge as Abhyankar's
conjecture predicts. 
The aim of this section is to show the existence of examples in pure
characteristic $p>0$ over a base, for which over the geometric generic fiber,
the
fundamental group has no $\Z/p$-quotient, that is the variety has no
Artin-Schreier covering, while for all geometric fibers of it over $\bar\F_p$,
it 
does have such non-trivial coverings.

\medskip
Let $k=\bar \F_p$, $ C'\subset \mathbb{P}^2$ be a smooth elliptic curve, $K$ be
$k(
C')$,  $x_i: \Spec K\to C', \ i=1,\ldots, 9$ be $9$ $K$-valued points
such that if $X\to \mathbb{P}^2_K$ is the blow up of those points, with strict
transform
$C_K$ of $C'_K$, then $\sO_{C_K}(C_K)\in \Pic^0(C_K)$ is not  torsion (this is 
arranged by choosing 8 points to be distinct $k$-points, base-changed, and the 
ninth to be a $k$-generic point). Here $X$ is defined over $K$, and so is  
$U=X\setminus C_K$.  We take a model $X_R\to \Spec R$ of $X\to \Spec K$, where 
$\Spec R$ is a non-empty open in $C'_0/\F_q$ where $C'_0/\F_q$ descends $C'/k$, 
over which the $x_i$ are rational, and are disjoint sections. So  $C \subset X$ 
over $K$ has a model $C_R\subset X_R$ over $R$ and 
$U_R:=X_R\setminus C_R \to \Spec R$ is the complement of a strict relative
normal
crossings divisor. 

\medskip

\begin{lem}
For any closed point    $a:\Spec \F_{q^m} \to \Spec R$,
\begin{itemize}
\item[i)] $\Gamma(U_a, \sO)$ has transcendence degree $ 1$ over $
\F_{q^m}$;
\item[ii)] $H^1_{\rm \acute{e}t}(U_x\otimes_{\F_{q^m}} k,\Z/p)\neq 0$;
\item[iii)] $\pi_1^{\rm \acute{e}t}(U_a\otimes_{\F_{q^m}} k)\neq 0$.
\end{itemize}
\end{lem}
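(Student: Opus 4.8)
The plan is to verify the three assertions in sequence, exploiting the fact that $X_a$ is the blow-up of $\mathbb{P}^2_{\F_{q^m}}$ at the $9$ specialized points $x_i(a)$ and that $C_a\subset X_a$ is the strict transform of the elliptic curve $C'_0\otimes\F_{q^m}$, so that over $\bar\F_p$ the curve $C_a$ has $\sO_{C_a}(C_a)\in\Pic^0(C_a)$. The crucial point is that $\Pic^0(C_a)$ is the group of $\bar\F_p$-points of an elliptic curve over a finite field, hence is a \emph{torsion} group; so, in contrast to the generic situation, $\sO_{C_a}(C_a)$ automatically has finite order $N=N(a)$ over $\bar\F_p$. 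This order phenomenon is exactly what drives i)--iii).

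For i): $U_a=X_a\setminus C_a$ is a smooth affine surface (it is affine because $C_a$ is an anticanonical curve on the rational surface $X_a$, being the strict transform of a cubic through the $9$ blown-up points; alternatively one invokes that on a rational surface the complement of an ample-after-suitable-twist anticanonical divisor is affine, but here more directly one uses that some multiple of $C_a$ moves in a base-point-free pencil when $\sO_{C_a}(C_a)$ is torsion). Concretely, if $NC_a\sim$ a divisor moving in a pencil, this pencil has $C_a$-component only, giving a non-constant function in $\Gamma(U_a,\sO)$, so $\degtr_{\F_{q^m}}\Gamma(U_a,\sO)\ge 1$; and it is $\le 1$ since $\dim U_a=2$ and $U_a$ is not $\mathbb{A}^2$ (it carries the non-trivial class of the anticanonical curve), or more robustly since $U_a$ fibers over an affine curve with proper — indeed, one-dimensional — generic fibre after removing $C_a$. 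I would phrase this as: the linear system $|NC_a|$ defines a morphism $U_a\to \mathbb{A}^1$ (or to an affine curve) whose function field gives transcendence degree exactly $1$.

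For ii): here $\sO_{C_a\otimes k}(C_a\otimes k)$ has some finite order $N$ in $\Pic^0$. If $p\mid N$ is not guaranteed, so one must be slightly careful; however the point is different — the \emph{exact} mechanism is the one used already over $K$: when $\sO_{C_a}(C_a)$ is non-trivial of order prime to $p$ one still gets $H^1_{\rm\acute et}(U_a\otimes k,\Z/p)\ne 0$ from the exact sequence relating $H^1(U_a,\Z/p)$ to $H^1(X_a,\Z/p)=0$ and the residue along $C_a$, or — what I think is the intended route — one exhibits an Artin--Schreier cover directly. Since $\Gamma(U_a\otimes k,\sO)$ is a non-trivial ring (transcendence degree $1$ over $k$ by i)), the Artin--Schreier sequence $0\to\Z/p\to\Gamma\xrightarrow{F-1}\Gamma$ on $U_a\otimes k$ has cokernel contributing to $H^1_{\rm\acute et}(U_a\otimes k,\Z/p)$, and because $\Gamma$ is not a finite field (it has a transcendental element $t$) the operator $F-1$ is not surjective — e.g. $t$ itself is not of the form $s^p-s$ when $t$ is transcendental, as $s^p-s$ for $s\in\Gamma$ would force $s$ algebraic over $k(t)$ with a pole structure incompatible with $t$. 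Hence $H^1_{\rm\acute et}(U_a\otimes k,\Z/p)\supseteq H^1_{\rm Zar}(U_a\otimes k,\Z/p)\ne 0$; more precisely one uses $H^1_{\rm\acute et}(U_a\otimes k,\Z/p)=\coker(F-1: \Gamma\to\Gamma)$ for $U_a\otimes k$ affine, which is non-zero.

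For iii): this is immediate from ii), since $H^1_{\rm\acute et}(U_a\otimes k,\Z/p)\ne 0$ forces the existence of a non-trivial $\Z/p$-torsor over $U_a\otimes k$, hence $\pi_1^{\rm\acute et}(U_a\otimes k)\ne 0$. The main obstacle I anticipate is the clean treatment of i) and the affineness/transcendence-degree bookkeeping: one must be sure that the torsion of $\sO_{C_a}(C_a)$ over $\bar\F_p$ really does produce a pencil (equivalently, that $U_a$ is affine and $\Gamma(U_a,\sO)$ has transcendence degree exactly one, not more), and that the non-surjectivity of Frobenius-minus-identity on this ring is argued correctly — the finite-field case being genuinely different, it is precisely here that the arithmetic hypothesis $k=\bar\F_p$ versus the generic $K$ changes the answer, which is the whole point of the example. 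I would devote most of the written proof to i), then deduce ii) from the structure of $\Gamma(U_a\otimes k,\sO)$ via Artin--Schreier, and note iii) as a formal consequence.
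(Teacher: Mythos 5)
Your overall strategy is the paper's: over a finite field $\Pic^0(C_a)$ is torsion, so $\sO_{C_a}(C_a)$ has finite order $r$, the ring $\Gamma(U_a,\sO)$ becomes a polynomial ring $\F_{q^m}[t]$ in one variable, and Artin--Schreier applied to the class of $t$ in $\Gamma/(F-1)\Gamma\subset H^1_{\rm \acute et}(U_a\otimes k,\Z/p)$ gives ii), with iii) formal. The paper gets i) by computing $H^0(U_a,\sO)/\F_{q^m}=\varinjlim_n H^0(\sO_{nC_a}(nC_a))=\varinjlim_n H^0(\sO_{rnC_a}(rnC_a))$ with $\dim H^0(\sO_{rnC_a}(rnC_a))=n+1$; your description via the base-point-free pencil $|rC_a|$ inducing a proper elliptic fibration $U_a\to\A^1$ is the geometric counterpart of the same computation and is fine.

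There is, however, one genuinely false step that you must delete: the claim that $U_a$ is affine. It is not --- $C_a^2=0$, so $C_a$ is not big, and $U_a=X_a\setminus C_a$ is exactly the preimage of $\A^1$ under the elliptic fibration $X_a\to\PP^1$ defined by $|rC_a|$; it therefore contains the complete elliptic fibres over all closed points of $\A^1$. Worse, affineness would contradict your own conclusion i): a two-dimensional affine variety has coordinate ring of transcendence degree $2$, not $1$. Fortunately nothing in your argument actually needs affineness. For i) the fibration gives $\Gamma(U_a,\sO)=\Gamma(\A^1,\sO)=\F_{q^m}[t]$, and for ii) you only need the injection $\Gamma/(F-1)\Gamma\hookrightarrow H^1_{\rm\acute et}(U_a\otimes k,\Z/p)$ coming from the Artin--Schreier sequence, which holds for any $\F_p$-scheme; the identification of $H^1_{\rm\acute et}$ with the full cokernel (which does require affineness) is not needed. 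The non-surjectivity of $F-1$ on $k[t]$ at the element $t$ is then a one-line degree count ($\deg(s^p-s)=p\deg s$ for nonconstant $s$), which is cleaner than the ``pole structure'' phrase you use. Also, your worry about whether $p$ divides the order $N$ is a red herring: only finiteness of the order enters.
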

\begin{proof}
Clearly i) implies ii) as the residue class of a transcendental element in
\[\Gamma(U_x\otimes_{\F_{q^m}} k, \sO)/{\rm Im}(F-1)\subset
H^1_{\rm \acute{e}t}(U\otimes_{\F_{q^m}} k, \Z/p)\] 
is non-trivial, and ii) implies iii) by
definition. 

We show i).   One has 
\[H^0(U_a, \sO)/\F_{q^m} =\varinjlim_{n\in \N}
H^0(\sO_{nC_a}(nC_a))= \varinjlim_{n \in \N} H^0(\sO_{rn C_a}(rn C_a))\] 
where $r$ is the torsion order of $\sO_{C_a}(C_a)$.  One easily computes that
${\rm
dim}_k H^0(\sO_{rnC_a}(rnC_a))= (n+1)$, and that,  as a $\F_{q^m}$-algebra,
$H^0(U_a,\sO)$ is
spanned by any element $t$ whose image spans $H^0(\sO_{rC}(rC))$. Thus
$\Gamma(U_a, \sO)=\F_{q^m}[t]$.

\end{proof}
\begin{lem}
One has
\begin{itemize}
\item[i)] $ \Gamma(U_K, \sO)=K$;
\item[ii)]  $H^1_{\rm \acute{e}t}(U_{\bar K}, \Z/p)=0$.
\end{itemize}
\end{lem}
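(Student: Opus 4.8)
The plan is to route everything through the coherent cohomology of the powers $\sO_X(nC_K)$ of the effective Cartier divisor $C_K$ on the smooth projective surface $X=X_K$, exploiting the one nontrivial input in the construction: the class $\sO_{C_K}(C_K)$ is \emph{not} torsion in $\Pic^0(C_K)$. Recall that $X$ is the blow-up $\pi\colon X\to\PP^2_K$ of $9$ points of the plane cubic $C'$, so $\pi_*\sO_X=\sO_{\PP^2_K}$ and $R^1\pi_*\sO_X=0$, whence $H^0(X,\sO_X)=K$ and $H^1(X,\sO_X)=0$; the strict transform $C_K$ is an elliptic curve with $\omega_{C_K}\cong\sO_{C_K}$ and $C_K^2=(C')^2-9=0$, so $\sO_{C_K}(nC_K)=\sO_{C_K}(C_K)^{\otimes n}$ is a line bundle of degree $0$ which, by the non-torsion hypothesis, is nontrivial for every $n\ge1$. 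Hence $H^0(C_K,\sO_{C_K}(nC_K))=0$, and by Serre duality on $C_K$ also $H^1(C_K,\sO_{C_K}(nC_K))=0$, for all $n\ge1$. Finally, since $j\colon U\hookrightarrow X$ is affine (the complement of a Cartier divisor), one has $H^i(U_K,\sO)=\varinjlim_n H^i(X,\sO_X(nC_K))$ for all $i$, the transition maps being multiplication by the tautological section of $\sO_X(C_K)$.

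For part i) I would feed these vanishings into the long exact cohomology sequences of
\[0\to\sO_X((n-1)C_K)\to\sO_X(nC_K)\to\sO_{C_K}(nC_K)\to0.\]
For $n\ge1$ the term $H^0(C_K,\sO_{C_K}(nC_K))$ vanishes, so each transition map $H^0(X,\sO_X((n-1)C_K))\to H^0(X,\sO_X(nC_K))$ is an isomorphism; therefore $\Gamma(U_K,\sO)=\varinjlim_n H^0(X,\sO_X(nC_K))=H^0(X,\sO_X)=K$.

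For part ii) I would use the Artin--Schreier sequence $0\to\Z/p\to\Ga\xrightarrow{F-1}\Ga\to0$ on the \'etale site of $U_{\bar K}$; combined with $H^i_{\rm \acute{e}t}(-,\Ga)\cong H^i(-,\sO)$ it gives an exact sequence
\[\Gamma(U_{\bar K},\sO)\xrightarrow{F-1}\Gamma(U_{\bar K},\sO)\to H^1_{\rm \acute{e}t}(U_{\bar K},\Z/p)\to H^1(U_{\bar K},\sO_U).\]
By part i) and flat base change along $K\to\bar K$ — legitimate because each $H^i(U_K,\sO)$ is a filtered colimit of the cohomology of the proper $X$ — one has $\Gamma(U_{\bar K},\sO)=\bar K$, on which $F-1\colon t\mapsto t^p-t$ is surjective; hence $H^1_{\rm \acute{e}t}(U_{\bar K},\Z/p)\hookrightarrow H^1(U_{\bar K},\sO_U)$. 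It then suffices to show $H^1(U_{\bar K},\sO_U)=0$: running the same long exact sequences, now using in addition $H^1(C_K,\sO_{C_K}(nC_K))=0$ for $n\ge1$, each transition map $H^1(X,\sO_X((n-1)C_K))\to H^1(X,\sO_X(nC_K))$ is an isomorphism, so $H^1(U_K,\sO)=\varinjlim_n H^1(X,\sO_X(nC_K))=H^1(X,\sO_X)=0$, and base change yields $H^1(U_{\bar K},\sO_U)=0$. Hence $H^1_{\rm \acute{e}t}(U_{\bar K},\Z/p)=0$.

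I do not anticipate a real obstacle: once the non-torsion hypothesis is in hand the whole argument is a short diagram chase off the two restriction sequences, all the force residing in the acyclicity of the normal-bundle powers $\sO_{C_K}(nC_K)$. The only points that need a little attention are the passage from $K$ to $\bar K$ — best handled, as above, by writing the cohomology of the open $U$ as a filtered colimit of cohomology of the proper $X$ before base-changing — and the identification of $H^i_{\rm \acute{e}t}(-,\Ga)$ with $H^i(-,\sO)$, which is what lets the coherent computation settle the Artin--Schreier group.
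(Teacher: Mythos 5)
Your argument is correct and is essentially the paper's: both proofs reduce everything to the vanishing $H^i(C_K,\sO_{C_K}(nC_K))=0$ for $i=0,1$, $n\ge 1$ forced by the non-torsion hypothesis, feed this into the colimit description $H^i(U,\sO)=\varinjlim_n H^i(X,\sO_X(nC_K))$, and finish ii) via Artin--Schreier (the paper phrases the colimit through the thickenings $\sO_{nC}(nC)$ rather than the reduced restrictions, which is the same computation via the obvious filtration). One correction: your parenthetical ``$j\colon U\hookrightarrow X$ is affine (the complement of a Cartier divisor)'' is false --- $C_K$ has self-intersection $0$, so it is not ample, and indeed part i) itself shows $U$ cannot be affine since $\Gamma(U_K,\sO)=K$ while $\dim U=2$; the colimit formula you actually need holds for the complement of \emph{any} effective Cartier divisor (cover $X$ by affines on which $C_K$ is principal and pass to the filtered colimit of \v{C}ech complexes), so this is a wrong justification rather than a gap.
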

\begin{proof}
The condition on $\sO_C(C) \in \Pic^0(C)$ not being torsion implies
$H^i(\sO_{nC}(nC))=0$ for all $n\ge 1$ and $i=0,1$.  This 
 immediately implies that i), and also ii), as  
$H^1_{\rm \acute{e}t}(U_{\bar K}, \Z/p)=\big(\varinjlim_{n\in \N}
H^1(\sO_{nC}(nC))\big)^{F=1}$.

\end{proof}
\begin{rmk} It would be desirable to understand whether or not $\pi_1^{\rm
\acute{e}t}(U_{\bar
K})=0$. This is equivalent to saying that for any finite \'etale map $h: V\to
U$,
$H^1_{\rm \acute{e}t}(V, \Z/p)=0$. 
For any finite flat map $h: V\to U$, such that $V$ is integral, one has $H^0(V,
\sO_V)=k$. Indeed, $H^0(V, \sO)$ is then a finite $k=H^0(U, \sO)$-algebra, thus
is an
artinian $k$-algebra, and is integral, thus is equal to $k$. But this is not
enough to conclude that $H^1_{\rm \acute{e}t}(V, \Z/p)=0$ and we have not
succeeded in computing it.
\end{rmk}
\section{Some theorems over any field} \label{sec:field}
\subsection{A purely geometric example}  As $\A^1$ has a huge fundamental group
in positive characteristic, 
it is not easy to find examples of smooth non-proper simply connected varieties
which are not simply the complement of a codimension $\ge 2$ subscheme in a
smooth projective simply connected variety.
We construct in this section a simple example of a normal surface for which the
maximal 
smooth open subvariety is simply connected. 

\medskip

Let 
$Y$ be a projective smooth variety  over an algebraically closed field $k$ of
characteristic $p$, and let
$L=\sO_Y(\Delta)$ be a line bundle, where $\Delta$ is a non-empty  effective
divisor.
We define  
\[ U=  \mathbb{P}(\sO_Y \oplus L)  \setminus \infty\mbox{-section},\] 
so $f:U\to Y$ is the total space of the geometric line bundle $L$. Note that if
$L$ is globally generated 
and ample, then $U$ also appears as the complement of the vertex of the
corresponding normal projective cone $X$ over 
$Y$ (which is ${\rm Proj} \oplus_{n\ge 0}H^0(Y,Sym^n (\sO_Y\oplus L))$). This
also gives concrete examples as in  
Theorem~\ref{thm:mainthm}, where we can get the conclusion in an alternate way,
because of the special geometry.  

\begin{prop}
\begin{itemize}
\item[i)]
$f_*:\pi_1^{\rm{\acute{e}t}} (U) \to \pi_1^{\rm{\acute{e}t}}(Y)$ is an
isomorphism, 
and $f^*$ induces an isomorphism on the category of stratified bundles.
\item[ii)] If $\pi_1^{\rm{\acute{e}t}}(Y)=0$, there are no non-trivial
stratified bundles on $U$.
\end{itemize}
\end{prop}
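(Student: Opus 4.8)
The plan is to deduce (ii) from (i) together with the solved case of Gieseker's conjecture, and to prove (i) by showing that $f^{*}\colon \mathrm{Strat}(Y)\to\mathrm{Strat}(U)$ is an equivalence of Tannakian categories, the statement on étale fundamental groups then following by taking profinite completions of the stratified fundamental groups (dos Santos, \cite[Cor.~12]{dSan07}). For (ii): if $\pi_{1}^{\rm \acute{e}t}(Y)=0$ then, $Y$ being smooth projective, \cite{EsnMeh10} gives that every stratified bundle on $Y$ is trivial, and the equivalence in (i) transports this to $U$. So the content is all in (i).

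For (i) I would use the zero section $s_{0}\colon Y\hookrightarrow U$, which satisfies $f\circ s_{0}=\mathrm{id}_{Y}$, hence $s_{0}^{*}\circ f^{*}=\mathrm{id}$ on $\mathrm{Strat}(Y)$. Thus on Tannaka groups $\rho_{f}\colon \pi^{\rm strat}(U)\to\pi^{\rm strat}(Y)$ admits a section, so it is faithfully flat and $f^{*}$ is fully faithful; one also sees this directly, since $f_{*}\sO_{U}=\bigoplus_{m\ge 0}L^{-m}$ gives $\Hom_{\sO_{U}}(f^{*}E_{n},f^{*}F_{n})=\bigoplus_{m\ge 0}\Hom_{\sO_{Y}}(E_{n},F_{n}\otimes L^{-m})$, a decomposition by the weight of the natural $\Gm$-action on $U$, and since the absolute Frobenius of $U$ acts on the fibre coordinate by $t\mapsto t^{p}$ it multiplies weights by $p$ while the transition maps of an $F$-divided sheaf are of weight $0$; compatibility then forces a morphism of $F$-divided sheaves to be of weight $0$, i.e.\ pulled back from $Y$. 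It remains to prove essential surjectivity.

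Given a stratified bundle $\sE=(E_{n},\sigma_{n})$ on $U$, put $\mathcal G=s_{0}^{*}\sE$; this is a stratified bundle on $Y$ because $F_{U}$ restricts to $F_{Y}$ on the zero section $Y_{0}$. I claim $\sE\cong f^{*}\mathcal G$. \emph{First}, $\sE$ and $f^{*}\mathcal G$ agree on the formal completion $\widehat U_{Y_{0}}$ of $U$ along $Y_{0}$: since $F_{U}$ raises the fibre coordinate to the $p$-th power, $F_{U}^{-1}(Y_{0}^{(k)})=Y_{0}^{(p(k+1)-1)}$, so for $N$ with $p^{N}>k$ the isomorphisms $E_{n}\cong(F_{U}^{N})^{*}E_{n+N}$ restrict over the $k$-th infinitesimal neighbourhood $Y_{0}^{(k)}$ to isomorphisms $E_{n}|_{Y_{0}^{(k)}}\cong(f|_{Y_{0}^{(k)}})^{*}(E_{n}|_{Y_{0}})=(f^{*}\mathcal G)|_{Y_{0}^{(k)}}$ (using that $F_{U}^{N}$ restricts to $F_{Y}^{N}\circ f$ on $Y_{0}^{(k)}$ and the transition maps of $\mathcal G$); these are compatible in $k$ and with the $\sigma_{n}$, giving $\sE|_{\widehat U_{Y_{0}}}\cong(f^{*}\mathcal G)|_{\widehat U_{Y_{0}}}$. \emph{Second}, one must algebraize this: the formal isomorphism is a formal global (horizontal) section of the stratified bundle $\underline{\Hom}(f^{*}\mathcal G,\sE)$ on $U$, and one must show it is the restriction of an honest section $\psi\colon f^{*}\mathcal G\to\sE$ over $U$. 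Any such $\psi$ is then automatically an isomorphism on all of $U$: $\det\psi$ is a section of a line bundle which is nowhere zero along $Y_{0}$, so $\Div(\det\psi)$ is an effective divisor disjoint from $Y_{0}$, and there is no nonzero such divisor because $\Gamma(U,\sO_{U})=\bigoplus_{m\ge0}\Gamma(Y,L^{-m})=k$ (as $L=\sO_{Y}(\Delta)$ with $\Delta$ effective and non-empty) forces $\Pic(U)=f^{*}\Pic(Y)$ and the absence of non-constant units. Combining the two steps yields $\sE\cong f^{*}\mathcal G$, so $f^{*}$ is an equivalence and (i), hence (ii), follows.

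The main obstacle is the algebraization in the second step: upgrading ``$\sE$ is a pullback from $Y$ after completion along $Y_{0}$'' to ``$\sE$ is a pullback from $Y$ over $U$'', equivalently the surjectivity of $\Gamma\bigl(U,\underline{\Hom}(f^{*}\mathcal G,\sE)\bigr)\to\Gamma\bigl(\widehat U_{Y_{0}},\widehat{\underline{\Hom}(f^{*}\mathcal G,\sE)}\bigr)$. The zero section is only a divisor in $U$, and its normal bundle $L$ need not be ample, so the classical Grothendieck $\mathbf{Lef}$ conditions do not apply; this is precisely where one must invoke Bost's refined $\mathbf{Lef}$/$\mathbf{Leff}$ conditions from Appendix~\ref{app}, exploiting the positivity encoded in $L=\sO_{Y}(\Delta)$ (which also makes both sides of the displayed map finite-dimensional and gives the statement outright when $\sE$ is itself a pullback). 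The remaining ingredients — the reductions, full faithfulness, the formal identification along $Y_{0}$, and the passage from a formal lift to a global isomorphism — are then routine.
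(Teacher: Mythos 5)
Your reduction of ii) to i), the full faithfulness of $f^*$ via the weight decomposition under the $\Gm$-action on the fibres, and the identification of $\sE$ with $f^*(s_0^*\sE)$ on the formal completion $\widehat{U}_{Y_0}$ of $U$ along the zero section are all fine. The gap is the algebraization step, which is where all the content of essential surjectivity sits, and your proposal does not actually supply an argument for it. You appeal to ``Bost's refined $\mathbf{Lef}$ conditions from Appendix~\ref{app}'', but every algebraization statement there (Theorems \ref{thalgnrample}, \ref{algmor}, \ref{Lef}, \ref{subquot}) requires either that the centre of completion be a complete intersection of \emph{ample} effective Cartier divisors in a projective scheme, or at least that its normal bundle be ample; here the zero section $Y_0$ has normal bundle $L=\sO_Y(\Delta)$ with $\Delta$ merely effective and non-empty, which need not be ample (nor even nef), so none of those results apply. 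Worse, the step cannot follow from soft arguments: if one drops the hypothesis $\Delta\neq\emptyset$, i.e.\ takes $L=\sO_Y$ and $U=Y\times_k\A^1$, your entire argument up to and including the formal agreement along $Y_0$ goes through verbatim (every stratified bundle on $Y\times\A^1$ agrees with the pullback of its restriction to $Y\times\{0\}$ on the formal completion, by the same Frobenius-contraction computation), yet the conclusion is false --- already for $Y$ a point, $\A^1$ carries nontrivial stratified bundles coming from Artin--Schreier covers, all of which are formally trivial at the origin. So the surjectivity of $\Gamma\bigl(U,\underline{\Hom}(f^{*}\mathcal G,\sE)\bigr)\to\Gamma\bigl(\widehat{U}_{Y_0},\cdot\bigr)$, even for horizontal sections, must use $\Delta\neq\emptyset$ in an essential way, and you have not indicated how.

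The paper's proof avoids this entirely and uses no positivity beyond $\Delta\neq\emptyset$. The section $\sO_Y(-\Delta)\to\sO_Y$ induces a birational dominant morphism $\sigma:Y\times_k\A^1\to U$ which contracts the $\A^1$-fibres over $\Delta$ to the zero section; the K\"unneth formula for $\pi^{\rm strat}$ of $Y\times_k\A^1$ (\cite[Prop.~2.4]{Gie75}, valid since $Y$ is proper) together with this contraction shows $\sigma^{*}\sE=p_Y^{*}(E')$ for some $E'$ on $Y$, whence $\sE$ and $f^{*}E'$ agree on a dense open of $U$, and one concludes by \cite[Lem.~2.5]{Kin14} (restriction to a dense open of a smooth variety induces a surjection of stratified fundamental groups). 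The same $\sigma$, with K\"unneth for $\pi_1^{\rm \acute{e}t}$, handles the \'etale statement; the Appendix is not needed for this Proposition at all. If you wish to keep your formal-completion strategy, you would have to prove a new Lefschetz-type extension statement for horizontal formal sections along the zero section of $\sO_Y(\Delta)$ with $\Delta$ only effective and non-empty; that statement is not in the paper or the cited literature.
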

\begin{proof}
ii): we apply i) and  \cite[Thm.~1.1]{EsnMeh10}   to $Y$. We are left with i):
the section  $ \sigma_0: \sO_Y(-\Delta)\to \sO_Y$ induces ${\rm Sym} \ 
\sO_Y(-\Delta)\to {\rm Sym} \ \sO_Y$ thus a morphism $\sigma: Y\times_k \A^1\to
U$ which is
an isomorphism on $U\setminus  f^{-1}(\Delta)$,  is birational dominant.
We fix base points $(y_0\in Y$, $0\in \A^1)$, where $y_0\in {\rm
supp}(\Delta)$, 
and set  $\sigma(y_0,0)=x_0$. Thus $\sigma_*: \pi_1^{\rm{\acute{e}t}}(Y\times_k
\A^1, (y_0,0))\to
\pi_1^{\rm{\acute{e}t}}(U,x_0)$ is surjective. One has the  K\"unneth formula
$\pi_1^{\rm{\acute{e}t}}(Y\times_k
\A^1,(y_0,0))=\pi_1^{\rm{\acute{e}t}}(Y,y_0)\times
\pi_1^{\rm{\acute{e}t}}(\A^1,0)$
as $Y$ is proper  \cite[Exp.~X,~Cor.~1.9] {SGA1}. 
Furthermore, $\sigma(y_0 \times \A^1)= x_0$, so that the composition
$\{y_0\}\times\A^1 \to U$ induces the trivial homomorphism
on fundamental groups. Hence the morphism $Y\times\{0\}\to U$, induced by
restricting $\sigma$, is a surjection on fundamental groups, 
which must be an isomorphism, as seen by composing with the projection $f:U\to
Y$.  Hence $f$ induces an isomorphism on fundamental groups as well. 

On the  other hand, the K\"unneth formula  holds for the stratified fundamental
group
\cite[Prop.~2.4]{Gie75}. Thus a very similar argument shows that if $E$ is
stratified on $U$, 
$\sigma^*(E)=p_Y^*(E')$ for some $E'$ on  $Y$, $p_Y: Y\times_k \A^1\to Y$. So
$E$ and
$f^*(E')$ are isomorphic on a non-trivial open of $U$,  thus are isomorphic.
Indeed 
$\pi^{\rm strat}(U)\to \pi^{\rm strat}(X)$ is surjective 
(\cite[Comment~before~Thm.3.5]{Phu12} and \cite[Lem.~2.5]{Kin14}).
Again restricting to the $0$-section shows $E'$ is unique. 

\end{proof}

\subsection{Second main theorem}
The aim of this section is to 
give a general statement which does not require $k$ to be isomorphic to the
algebraic 
closure of a finite field.  The proof is a simple application of
Theorem~\ref{thm:bost}. 
It is nonethelss worth being mentioned as it yields new concrete examples, see
\ref{ex}.
\begin{thm} \label{thm2}
Let $X$ be a normal projective variety of dimension $d\ge 1$,  defined over an
algebraically closed field $k$.  Let $U$ be its regular locus. If  $\pi_1^{\rm
\acute{e}t}(U)=0$, and $U$ contains a projective curve $C$, intersection of
$(d-1)$-hyperplane sections, such that $\pi_1^{{\rm \acute{e}t}}(C)$ is abelian,
then there are no non-trivial $\sO_X$-coherent $\sD_X$-module on $U$.

\end{thm}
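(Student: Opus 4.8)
The plan is to follow the same skeleton as the proof of Theorem~\ref{thm:mainthm}, using Theorem~\ref{thm:bost} to descend to the curve $C$, but replacing the appeal to Hrushovsky's theorem (which forced $k=\bar\F_p$) by a direct argument available once $\pi_1^{\rm \acute{e}t}(C)$ is abelian. First I would let $\sE=(E_n,\sigma_n)_{n\ge 0}$ be a stratified bundle on $U$ and restrict it along $\iota:C\hookrightarrow U$. As in Theorem~\ref{thm:mainthm}, by \cite[Prop.~2.3]{EsnMeh10} together with \cite{EsnMeh10b}, after a Frobenius shift $\iota^*\sE$ becomes a successive extension of stratified bundles on $C$ whose underlying bundles are $\mu$-stable of slope $0$; and by Theorem~\ref{thm:bost} each such irreducible stratified sub/sub-quotient on $C$ is the restriction of an irreducible stratified bundle $\sV=(V_n,\tau_n)$ on $U$. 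By Lemma~\ref{lem:tr_fncs}~ii), triviality of these $\sV$ propagates back to triviality of $\sE$, so it suffices to treat the case where $\sE$ is irreducible and $\iota^*E_n$ is $\mu$-stable of slope $0$ for all $n$.

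Next I would exploit the hypothesis that $\pi_1^{\rm \acute{e}t}(C)$ is abelian. The Tannaka group $\pi^{\rm strat}(C)$ need not be finite, but its image consists of those stratified bundles generated by $\iota^*\sE$; since $\pi_1^{\rm \acute{e}t}(C)$ (the profinite completion of $\pi^{\rm strat}(C)$ by dos Santos \cite{dSan07}) is abelian, the relevant quotient of $\pi^{\rm strat}(C)$ through which $\iota^*\sE$ factors is abelian (its étale quotient is abelian, and on a smooth projective curve the neutral component of $\pi^{\rm strat}$ is controlled by the structure results we need here). On a smooth projective curve an irreducible stratified bundle whose monodromy is abelian is rank one: indeed an irreducible representation of an abelian (pro)group scheme is one-dimensional after passing to a suitable field extension, and over the algebraically closed $k$ this forces $r=1$. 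So I may assume $\iota^*\sE$, and hence each $\iota^*E_n$, is a line bundle of degree $0$ on $C$. A rank-one stratified bundle on $C$ is a point of $F$-divided line bundles, i.e. a compatible system $(L_n)$ with $L_n^{\otimes p}\cong L_{n+1}$-pullback relations in $\Pic^0(C)$; such a point corresponds to an element of $\varprojlim (\Pic^0(C), \times p)$, and the only way this is compatible with the Frobenius structure while the system stays bounded is that $L_0$ is a $p$-power-torsion point, hence defines an Artin–Schreier-type cover, i.e. a nonzero class in $H^1_{\rm \acute{e}t}(C,\Z/p^N)$ — which contradicts $\pi_1^{\rm \acute{e}t}(C)=0$ once we know the latter (but we only assumed $\pi_1^{\rm \acute{e}t}(U)=0$). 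So instead I would push the line bundle back to $U$: by Theorem~\ref{thm:bost} the rank-one stratified bundle on $C$ is $\iota^*$ of a rank-one stratified bundle on $U$, which by Lemma~\ref{lem:tr_fncs}~ii) (applied with $\mathbb I$, after twisting, or directly via the identification of rank-one stratified bundles with $\varinjlim H^1_{\rm fppf}(U,\mu_{p^n})$ and the Kummer sequence) yields a class in $H^1_{\rm \acute{e}t}(U,\Z/p)$; since $\pi_1^{\rm \acute{e}t}(U)=0$ this class vanishes, so the stratified line bundle on $U$ is trivial, hence so is its restriction to $C$, hence by the reduction steps so is $\sE$.

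The main obstacle I expect is the step "irreducible + abelian monodromy on a smooth projective curve $\Rightarrow$ rank one": one must be careful that $\pi^{\rm strat}(C)$ is an affine (pro-algebraic) $k$-group scheme which need not be reduced or connected, so "abelian profinite completion" does not instantly give "abelian group scheme". The cleanest route is to observe that for a smooth projective curve the neutral connected component $\pi^{\rm strat}(C)^\circ$ is known (its quotients are the structure groups of the stratified bundles with semisimple reduction, and for abelian $\pi_1^{\rm \acute{e}t}$ these are products of $\Gm$'s and unipotent groups coming from the Jacobian's $F$-divided points), so that any irreducible object is rank one; alternatively one invokes that an irreducible stratified bundle on $C$ with abelian Tannaka monodromy group has a one-dimensional underlying representation after a finite extension, and descends. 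The rest is then bookkeeping with the Kummer/Artin–Schreier sequences to convert the resulting $F$-divided line bundle on $U$ into an étale $\Z/p$-class killed by $\pi_1^{\rm \acute{e}t}(U)=0$. Note this argument uses no arithmetic input, so it is valid over any algebraically closed $k$, as claimed.
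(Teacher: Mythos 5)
Your proposal goes wrong at two essential points. First, you treat $C$ as a smooth projective curve throughout: the reduction via \cite[Prop.~2.3]{EsnMeh10} to $\mu$-stable bundles of slope $0$, and the later discussion of $\Pic^0(C)$ and of the neutral component of $\pi^{\rm strat}$ of a smooth projective curve, all presuppose smoothness. But Theorem~\ref{thm2} does not assume $C$ smooth, and the whole point of the hypothesis ``$\pi_1^{\rm \acute{e}t}(C)$ abelian'' is to allow singular ample curves (see the example following the theorem, where $C$ is a singular plane quartic). The paper's proof therefore begins by replacing $C$ by its semi-normalization (Proposition~\ref{prop:Fdiv}~ii)), classifies which semi-normal curves have abelian \'etale fundamental group (trees of smooth rational curves, configurations whose dual graph has one loop, or a tree glued to an elliptic or $1$-nodal component), and never invokes $\mu$-stability on $C$ at all.

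Second --- and this is the gap you yourself flag as ``the main obstacle'' --- the implication you need, namely that every irreducible stratified bundle on $C$ has rank one, is exactly the mathematical content of the theorem, and your proposed justification does not work: the profinite completion of $\pi^{\rm strat}(C)$ being abelian does not imply that $\pi^{\rm strat}(C)$, or even the monodromy group scheme of a single object, is abelian, since the connected component is invisible in the \'etale quotient. The paper establishes abelianness of $\pi^{\rm strat}(C)$ directly, case by case, via an Atiyah-type classification of indecomposable vector bundles on the semi-normal curves in question (a bundle trivial on the normalization is $L\otimes E_r$ with $L$ a line bundle and $E_r$ the unipotent indecomposable), adapting Gieseker's argument for elliptic curves \cite[p.~71]{Gie75}. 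Once this is done, Theorem~\ref{thm:bost} gives that $\pi^{\rm strat}(U)$ is abelian; rank-one objects are then trivial because $\pi_1^{\rm \acute{e}t}(U)=0$ forces $\Pic(U)$ to be finitely generated (your conversion of an $F$-divided line bundle into a $\Z/p$-class is not correct as stated: on an elliptic curve, non-torsion points of $\Pic^0$ already give nontrivial $F$-divided line bundles, so infinite $p$-divisibility does not produce $p$-power torsion); and the remaining unipotent extensions of $\mathbb{I}$ by itself are handled by Proposition~\ref{prop:ext} and Lemma~\ref{lem:ext}, using finiteness of $H^1_{\rm \acute{e}t}(C,\Z/p)$. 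Your overall skeleton (restrict to $C$, lift back by Theorem~\ref{thm:bost}, kill rank-one and unipotent pieces using $\pi_1^{\rm \acute{e}t}(U)=0$) matches the paper's, but the two central steps --- dealing with a singular $C$ and proving that irreducibles have rank one --- are missing.
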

\begin{prop} \label{prop:ext}
Let $C\to U$ be a proper subscheme of a smooth quasi-projective variety 
defined over an algebraically closed field $k$ 
 of characteristic $p>0$.  If $\pi^{\rm{strat}}(C)\to  \pi^{\rm strat} (U)$ is
surjective, then any  $\sO_X$-coherent $\sD_X$-module on $U$ which is an
extension of
$\mathbb{I}$ by itself has finite monodromy.  

\end{prop}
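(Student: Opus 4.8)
We must show that a stratified bundle $\sE = (E_n,\sigma_n)$ on $U$ sitting in an exact sequence $0 \to \mathbb{I} \to \sE \to \mathbb{I} \to 0$ has finite monodromy group, given that $\iota_*\colon \pi^{\rm strat}(C) \to \pi^{\rm strat}(U)$ is surjective. The monodromy group of such an $\sE$ is a subgroup scheme of the group of upper-triangular unipotent $2\times 2$ matrices, i.e.\ of $\Ga$; so it is either trivial, all of $\Ga$, or a finite group scheme $\alpha_{p^k}$ or $\Z/p^k$ (or a product thereof, but in the one-dimensional unipotent case just these). The strategy is to pull back to $C$, use what is known about extensions of $\mathbb{I}$ by $\mathbb{I}$ there, and then descend finiteness of the monodromy back to $U$ via the surjectivity hypothesis.

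First I would record that, by Lemma~\ref{lem:tr_fncs}(ii) (and its proof, following \cite[(9)]{dSan07}), extensions of $\mathbb{I}$ by $\mathbb{I}$ in the category of stratified bundles on any connected $k$-scheme of finite type are classified by a group that injects into $H^1_{\rm \acute{e}t}(-,\Z/p)$; more precisely, the relevant Tannakian $\Ext^1$ is the pro-$p$ part, and a nonzero extension of $\mathbb{I}$ by $\mathbb{I}$ arises from a $\Z/p$-torsor (or a thickening thereof). Next, $\iota^*\sE$ is an extension of $\mathbb{I}$ by $\mathbb{I}$ on $C$. Its class lies in $\Ext^1_C(\mathbb{I},\mathbb{I})$, which — since $C$ is projective — is computed from $H^1(C,\Z/p)$ and the relevant Frobenius-equivariant cohomology; because $C$ is a projective curve over $k$, these groups are finite-dimensional, and moreover $F$ acts on the relevant $k$-vector space $H^1(C,\sO_C)$ (up to the usual identifications) in a way that makes the monodromy of any such extension a {\em finite} group scheme: the point is that a unipotent stratified bundle on a {\em projective} variety has finite monodromy, because boundedness forces the underlying bundles $E_n$ to repeat up to Frobenius pullback, hence $\sE$ is ``$F$-periodic'' in the appropriate sense. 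I would make this precise by invoking the projective case of the main circle of ideas: on the projective curve $C$ the extension class of $\iota^*\sE$ lies in a finite group, so $\iota^*\sE$ has finite monodromy.

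Then I would transfer this to $U$. Surjectivity of $\iota_*\colon \pi^{\rm strat}(C)\to\pi^{\rm strat}(U)$ means $\iota^*$ is a fully faithful exact functor whose image is closed under subobjects; consequently the monodromy group of $\sE$ on $U$ is the image of the monodromy group of $\iota^*\sE$ on $C$ under a faithfully... more carefully: the Tannakian subcategory $\langle\sE\rangle$ of stratified bundles on $U$ generated by $\sE$ maps, under $\iota^*$, {\em isomorphically} onto the subcategory generated by $\iota^*\sE$ — fullness and faithfulness of $\iota^*$ give that $\iota^*$ on $\langle\sE\rangle$ is fully faithful, and surjectivity of $\iota_*$ gives that subobjects of $\iota^*\sE$ descend, so $\langle\sE\rangle \to \langle\iota^*\sE\rangle$ is an equivalence. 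Hence $\pi^{\rm strat}(U)$-monodromy of $\sE$ equals the $\pi^{\rm strat}(C)$-monodromy of $\iota^*\sE$, which we have just shown is finite. That is the claim.

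The main obstacle is the finiteness of the monodromy of a unipotent extension $0\to\mathbb{I}\to\iota^*\sE\to\mathbb{I}\to 0$ on the {\em projective} curve $C$ — i.e.\ showing the extension class is $F$-periodic and not, say, a nonsplit extension with monodromy all of $\Ga$ (which {\em can} happen on affine curves, e.g.\ $\mathbb{A}^1$, precisely because $H^1_{\rm \acute{e}t}$ there is infinite). On a projective curve the needed input is that the extension class lies in the finite group $\Ext^1_C(\mathbb{I},\mathbb{I}) \hookrightarrow H^1_{\rm \acute{e}t}(C,\Z/p)$ — {\em if} one already knows $\Ext^1$ is exactly this finite group and not larger — together with the fact that a stratified bundle trivial on an étale cover is itself of finite monodromy. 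So the argument hinges on the projective case of the comparison $\Ext^1(\mathbb{I},\mathbb{I}) \cong H^1_{\rm \acute{e}t}(C,\Z/p)$, which on a projective curve is a finite group, and then the descent via surjectivity handles $U$ formally. I would expect the write-up to spend most of its length pinning down this projective-curve computation, with the descent step being a short Tannakian formality.
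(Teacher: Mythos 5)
Your argument is correct and follows essentially the same route as the paper: the paper also reduces to the dichotomy that the monodromy is a smooth subgroup scheme of $\G_a$, hence either all of $\G_a$ or finite, and then rules out $\G_a$ by noting that the extension class of $\iota^*\sE$ lies in $\Ext^1_C(\mathbb{I},\mathbb{I})=H^1_{\rm \acute{e}t}(C,\Z/p)\otimes_{\F_p}k$ with $H^1_{\rm \acute{e}t}(C,\Z/p)$ finite ($C$ proper), so a finite Galois cover $D\to C$ trivializes $\iota^*\sE$, and surjectivity of $\iota_*$ identifies the monodromy on $U$ with that on $C$. The digression about $F$-periodicity/boundedness is unnecessary; the finite-\'etale-cover argument you end with is exactly the paper's Lemma~\ref{lem:ext}.
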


\begin{proof}
As  ${\rm
Ext}^1_C(\mathbb{I}, \mathbb{I})=H^1_{\rm \acute{e}t}(C, \Z/p)\otimes_{\F_p} k$
(\cite[(9)]{dSan07}),  
restriction to $C$ of any  $\sO_X$-coherent $\sD_X$-module on $U$ which is an
extension of
$\mathbb{I}$  by itself has finite monodromy, thus by the surjectivity
$\pi^{\rm{strat}}(C)\to  \pi^{\rm strat} (U)$, 
 any  $\sO_X$-coherent $\sD_X$-module on $U$ which is an extension of
$\mathbb{I}$ by itself has finite monodromy as well. 

\end{proof}

\begin{proof}[Proof of Theorem~\ref{thm2}]
By  Proposition~\ref{prop:Fdiv} ii), we may assume that $C$ is semi-normal. 
This implies  that
\begin{itemize}
\item[i)] 
 either all components of $C$ are smooth rational curves and the homology 
of the dual graph of $C$ is $0$ (so the graph is a tree) or $1$-dimensional, 
\item[ii)]  or else $C$ is the union $C'\cup C''$, where 
$C'$ is a tree of smooth rational curves, and $C''$ is irreducible, so is 
either a smooth elliptic 
curve 
or an irreducible rational with one node (called 1-nodal curve in the sequel), 
and $C'\cap C''$ consists of one point.  
\end{itemize}
For any smooth irreducible component $C_0$ of $C$, $\pi^{\rm strat}(C_0)$ is
trivial if
$C_0$ is rational, and  abelian if $C_0$ is an elliptic curve
(\cite[p.~71]{Gie75}). 
If the graph is a tree, then the bundles $E_n$ of a $F$-divided sheaf $E=(E_n,
\sigma_n)_{n\in \N}$
on $C$ are uniquely determined by their restriction to the components $E_i$, so 
 $\pi^{\rm strat}(C)$ is $0$ if all components are rational, else (case (ii)
above) is equal to
$\pi^{\rm strat}(C'')$.

Assume $C$ is a graph of rational curves with $1$-dimensional homology. One
shows easily that
any indecomposable vector bundle of rank $r$ on $C$, whose pull-back to the
normalization is 
trivial, must be isomorphic to $L\otimes E_r$, where $L$ is a line bundle, and
$E_r$ is the 
unique indecomposable vector bundle of rank $r$ which is unipotent (multiple
extension of $\sO_C$). 
Indeed, we may write $C=C_1\cup C_2$ where $C_1$ is a tree of smooth rational
curves, $C_2$ an 
irreducible rational curve, and $C_1\cap C_2$ consists of two points. Then
trivializing the 
restriction of the bundle to 
$C_1$  and $C_2$ in a compatible manner at one of the intersection points of
$C_1$ and $C_2$, 
 the bundle 
is determined up to isomorphism by the conjugacy class of the element of
$GL_r(k)$ given by the 
glueing data at the second point of $C_1\cap C_2$. Using the Jordan canonical
form we deduce the 
description of bundles.

A similar description holds when $C=C'\cup C''$, where $C'$ is a tree of smooth
rational 
curves, and $C''$ is 1-nodal. In both of these cases, we note the resemblance to
the Atiyah 
classification of indecomposable bundles on elliptic curves. 

The proof in \cite[p.~71]{Gie75} for elliptic curves, that $\pi^{\rm{strat}}$
is abelian, (since any irreducible $\sO_X$-coherent $\sD_X$-module is of rank 1,
which results  from the 
explicit description using Atiyah's classification), adapts to the other two
remaining cases. It shows 
that  in the situation where the homology of the graph is 1-dimensional, and
where one component is 1-nodal, 
$\pi^{\rm strat}$ is again abelian. There is a simplification resulting from the
fact that ${\rm Pic}^0$ has 
no $p$-torsion, so that line bundles which admit an$\sO_X$-coherent
$\sD_X$-module admit only one. 

By Theorem~\ref{thm:bost}, this implies  that $\pi^{\rm strat}(U)$ is abelian,
so
all irreducible  objects have rank $1$. Since $\pi_1^{\rm{\acute{e}t}}(U)=0$
implies in particular that $\Pic(U)$ is finitely generated, the rank $1$ objects
are trivial, so all we have to understand are extensions of $\mathbb{I}$ by
itself when $\pi_1^{\rm{\acute{e}t}}(C)$ is not-trivial.  We apply
Proposition~\ref{prop:ext} to conclude. 
\end{proof}
 \begin{ex} \label{ex}
Assume ${\rm char.}\,k\neq 2,3$. A non-trivial example of Theorem~\ref{thm2} is
as follows. Let $X\subset {\mathbb P}^3$ be given by an equation 
\[f(x,y,z)w+g(x,y,z)=0\] 
where $f=0$ defines a smooth plane cubic, and $g=0$ is a plane quartic which (i)
intersects the plane cubic $f=0$ transversally, 
and (ii) has abelian \'etale fundamental group, so that it is singular. Thus the
semi-normalization  of $(g=0)$ is as described in the proof of 
 Theorem~\ref{thm2}. 
 
It is easy to see that $X$ has a unique singular point, given by
$P=\{x=y=z=0,w=1\}$, whose complement $U$ contains the ample curve given by
$w=0$, 
which is the plane curve $g=0$.  One also sees that the blow up
$\pi:\tilde{X}\to X$ of $X$ at its singular point is identified with the blow up
of 
${\mathbb P}^2$ at the 12 points given by $f=g=0$, and the exceptional divisor
$\pi^{-1}(P)$ is identified with the strict transform of the plane 
cubic $f=0$ under the blow-up of ${\mathbb P}^2$.
 
The surface $U$ satisfies $\pi_1^{\rm{\acute{e}t}}(U)=0$, because (i) the
existence of the ample curve $C$ with abelian fundamental group implies that 
the fundamental group of $U$ is also abelian (ii) using that $U=X\setminus\{P\}$
is identifed with an open subset of the blow-up of ${\mathbb P}^2$ 
obtained by removing the strict transform of the cubic $f=0$, we see that the
tame fundamental group of $U$ is trivial (iii) a direct computuation 
now shows also that $H^1_{\rm{\acute{e}t}}(U,{\mathbb Z}/p{\mathbb Z})=0$ by
considering the Frobenius action on $H^1(U,\sO_U)$. 

 Here are some explicit examples: assume ${\rm char}. k\neq 2,3$; take
$f=x^3+y^3+z^3$. The following choices of $g$ satisfy the conditions above:\\
 (i) $g=x^4+ay^3z$, where $a=1$ or $-1$ depending on $p$ (here $g=0$ is simply
connected)\\
 (ii) $g=(x^2+ay^2-4z^2)(x^2+by^2-4z^2)$, where $a\neq b$ are suitably chosen,
depending on $p$ (here $g=0$ consists of 2 conics tangent at 2 points, 
 so has abelian fundamental group); a variant is obtained by taking $b=0$, so
that the quartic $g=0$ is the union of a conic and two tangent lines.
 \end{ex}
\begin{rmk} In the above example, $X$ is a normal quartic surface with a unique
singular point,  whose complement $U$ is simply connected, 
when $U$ contains a ``special'' singular hyperplane section. It seems plausible
to guess that $U$ is simply connected for any such quartic 
with a triple point, even if no such special hyperplane section exists. 
\end{rmk}
\bibliographystyle{plain}
\renewcommand\refname{References}

\appendix \newpage


\section{ Algebraization of formal morphisms and  formal subbundles, 
{\tiny by} {\small Jean-Beno\^it Bost} } \label{app}

In this Appendix, we establish the algebraicity criterion for vector bundles on
which the proof of Theorem 3.5 relies.

Our derivation of this criterion relies on two classical results: an
\emph{algebraicity criterion} for formal subschemes, which already appears in
\cite{Bost01} in a quasi-projective setting, and actually is a straightforward
consequence of the study by Hartshorne (\cite{Hartshorne68}) of the fields of
meromorphic functions on some formal schemes, and a \emph{connectedness theorem}
\emph{à la} Bertini--Fulton--Hansen (\cite{FultonHansen79}). 

In the first two  sections of this Appendix, we recall these two results in a
form suitable for our purposes. Then we prove an algebraicity criterion
concerning formal morphisms, from which we finally deduce algebraicity results
for formal vector bundles.

Let us also mention that the type of algebraicity results discussed in this
Appendix originates in Grothendieck's seminar SGA2 (\cite{GrothendieckSGA2}) and
in the work of Hironaka and Matsumura (\cite{Hironaka68},
\cite{HironakaMatsumura68}).

We denote by $k$ an algebraically closed field. All $k$-schemes are assumed to
be separated, of finite type over $k$.

In this Appendix, we could have worked with quasi-projective $k$-schemes only.
Our results still hold in this quasi-projective setting\footnote{This is formal,
but for the implication $(1) \Rightarrow (2)$ in Proposition \ref{algequ};
however, the validity of the quasi-projective version of this implication
directly follows from its proof below.}, which would be enough for the proof of
Theorem 3.5. Actually several of our arguments become more elementary in this
setting; this is for instance the case of the proofs of Theorem
\ref{thalgnrample} and  Proposition \ref{etZar}. 

The exposition of the results in this Appendix has benefited from the
suggestions of an anonymous referee, to whom I am grateful.

\subsection{Algebraization of formal subschemes}

 Let $Z$ be a $k$-scheme over $k$, and $Y$ a closed  subscheme of $Z$. We shall
denote by $i: Y \hra Z$ the inclusion morphism, and by $\hZ_Y$ the formal
completion of $Z$ along $Y$.

 Let $\hV$ be a closed $k$-formal subscheme of $\hZ_Y$, which admits $Y$ as
scheme of definition\footnote{In other words, it contains $Y$ and its underlying
topological space $\vert \hV \vert$ coincides with $\vert \hZ_Y \vert = \vert Y
\vert.$} and which is formally \emph{smooth} over $k$ (or equivalently is
regular), of pure dimension $d$. 
The \emph{Zariski closure} $\overline{\hV}$ is the smallest closed subscheme of
$Z$ which, considered as a formal scheme, contains $\hV$, or equivalently, the
smallest closed subscheme of $Z$  which contains $Y$ and whose formal completion
along $Y$ contains $\hV$. The smoothness of $\hV$ implies that $\overline{\hV}$
is reduced, and is irreducible if $Y$ is connected. The  dimension of 
$\overline{\hV}$ is at least $d$. 

\begin{proposition}\label{algequ} With the above notation, the following
conditions are equivalent:
\begin{enumerate}
\item The Zariski closure $\overline{\hV}$ of $\hV$ in $Z$ has dimension $d$.
\item There exists a smooth $k$-scheme $M$ and  morphisms of $k$-schemes $j:Y
\lra M$ and $\mu: M \lra Z$ such the diagram
\begin{equation}\label{triang}
\xymatrix{ & M \ar[d]^\mu\\ 
Y\ar[r]^i \ar[ur]^j & Z}
\end{equation}
is commutative 
and the associated morphism of formal completion 
$\hat{\mu}: \hM_{j(Y)} \lra \hZ_Y$ maps $\hM_{j(Y)}$ isomorphically onto $\hV$:
\begin{equation}\label{muiso}
\hat{\mu}: \hM_{j(Y)} \lrasim \hV.
\end{equation}
\end{enumerate}
\end{proposition}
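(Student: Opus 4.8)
The plan is to prove the two implications separately, with the substantive content lying in $(1) \Rightarrow (2)$; the reverse implication $(2) \Rightarrow (1)$ is essentially formal. For $(2) \Rightarrow (1)$: given the diagram \eqref{triang} with $\hat\mu$ inducing the isomorphism \eqref{muiso}, the Zariski closure $\overline{\hV}$ must contain the scheme-theoretic image $\overline{\mu(M)}$ of $\mu$, since the latter is a closed subscheme of $Z$ whose formal completion along $Y$ contains $\mu(\hM_{j(Y)}) = \hV$. Conversely $\overline{\mu(M)}$ is a subscheme with this containment property, so by minimality $\overline{\hV} \subseteq \overline{\mu(M)}$, giving $\overline{\hV} = \overline{\mu(M)}$. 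Now $\dim \overline{\mu(M)} \le \dim M = d$ (using that $M$ is smooth of pure dimension $d$ — here $d$ is the common dimension of $\hV$ and, via $\hat\mu$, of $\hM_{j(Y)}$, hence of $M$ along $j(Y)$; if $M$ has other components one first replaces $M$ by the union of components meeting $j(Y)$, which does not affect the formal completion along $j(Y)$). Combined with the inequality $\dim \overline{\hV} \ge d$ recorded before the statement, this forces $\dim \overline{\hV} = d$.

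The heart of the matter is $(1) \Rightarrow (2)$. Here I would invoke the \emph{algebraicity criterion for formal subschemes} referred to in the introduction to the Appendix — the result of Hartshorne \cite{Hartshorne68} as packaged in \cite{Bost01}. The hypothesis that $\overline{\hV}$ has dimension exactly $d$, equal to the dimension of the smooth formal scheme $\hV$, is precisely the condition under which that criterion asserts $\hV$ is \emph{algebraic}, i.e. $\hV$ coincides with the formal completion along $Y$ of its Zariski closure $\overline{\hV}$. Thus $\hV = \widehat{(\overline{\hV})}_Y$. Since $\hV$ is formally smooth over $k$ of pure dimension $d$ and $\overline{\hV}$ is reduced of dimension $d$ with $Y$ as a closed subscheme, one concludes that $\overline{\hV}$ is smooth along $Y$: regularity of the formal completion $\widehat{(\overline{\hV})}_Y$ at the points of $Y$ is equivalent to regularity of the local rings of $\overline{\hV}$ at those points. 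Let $M_0 \subseteq \overline{\hV}$ be the (open) smooth locus, which contains $Y$; take $M = M_0$, with $j : Y \hra M_0$ the inclusion and $\mu : M_0 \hra \overline{\hV} \hra Z$ the composite. Then $M$ is smooth over $k$, the triangle \eqref{triang} commutes, and $\hat\mu$ identifies $\hM_{j(Y)} = \widehat{(M_0)}_Y = \widehat{(\overline{\hV})}_Y = \hV$, which is \eqref{muiso}.

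The step I expect to be the main obstacle — or at least the one requiring the most care — is the precise invocation of the Hartshorne–Bost algebraicity criterion: one must check that the formal scheme $\hV$ satisfies the hypotheses under which that criterion applies (in particular, the properties of $Z$ needed, e.g. that $Z$ is of finite type over $k$ so that the field of formal-meromorphic functions along $Y$ has the expected transcendence-degree behaviour, and the smoothness/connectedness of $\hV$), and that ``$\dim \overline{\hV} = d$'' is genuinely the form in which the criterion concludes algebraicity. A secondary point to handle cleanly is the passage, after algebraicity is known, from regularity of the formal completion to smoothness of $\overline{\hV}$ along $Y$ over the algebraically closed field $k$ — this is standard (formal smoothness of a Noetherian local ring essentially of finite type over a field is equivalent to geometric regularity), but should be stated. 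Everything else is bookkeeping with scheme-theoretic images and formal completions.
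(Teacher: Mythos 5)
Your argument for $(2)\Rightarrow(1)$ is correct and coincides with the paper's. The direction $(1)\Rightarrow(2)$, however, has a genuine gap, located exactly at the step you flagged. First, a logical point: in this Appendix ``$\hV$ is algebraic'' is \emph{defined} to mean that the equivalent conditions of Proposition \ref{algequ} hold, and the Hartshorne input on transcendence degrees of formal-meromorphic functions is used afterwards, in Theorem \ref{thalgnrample}, only to \emph{verify} condition (1) under an ampleness hypothesis; invoking ``the algebraicity criterion'' to pass from (1) to (2) is therefore circular, since the Proposition is itself the bridge between the two. Second, and more seriously, the assertion you extract from it --- that $\dim\overline{\hV}=d$ forces $\hV=\widehat{(\overline{\hV})}_Y$ --- is false. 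The completion of $\overline{\hV}$ along $Y$ always \emph{contains} $\hV$, but it can be strictly larger and need not be regular. Take $Z=\A^2$, $Y=\{0\}$, and let $\hV$ be one of the two formal branches at the origin of an integral plane curve with an ordinary node there (e.g.\ $y^2=x^2(x+1)$ when ${\rm char}\,k\neq 2$): then $\hV$ is a smooth formal curve, its Zariski closure is the whole irreducible nodal curve, so $\dim\overline{\hV}=1=d$ and (1) holds; yet $\widehat{(\overline{\hV})}_Y$ is the union of the two branches, $\overline{\hV}$ is singular at the origin, and your candidate $M=(\overline{\hV})_{\rm reg}$ does not even contain $Y$. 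Condition (2) is nevertheless satisfied, with $M$ the normalization of the curve and $j(Y)$ one of the two preimages of the node --- note that $\mu$ is then not an open immersion, only an isomorphism on formal completions, which is precisely why the rest of the Appendix must work with \'etale rather than Zariski neighborhoods (cf.\ the Hironaka--Hartshorne example discussed there).

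The paper's proof of $(1)\Rightarrow(2)$ repairs exactly this point: one passes to the normalization $\nu:\overline{\hV}^{\rm nor}\to\overline{\hV}$, uses the analytic normality of normal rings to lift the inclusion $\hV\hra\widehat{(\overline{\hV})}_Y$ to a morphism $\hat{j}:\hV\to\overline{\hV}^{{\rm nor}\wedge}_{\nu^{-1}(Y)}$, and observes that $\hat{j}$ is a closed immersion which, on local rings, is a surjection of integral domains of the same dimension $d$, hence an isomorphism onto the completion along a connected component $j(Y)$ of $\nu^{-1}(Y)$. This shows that $\overline{\hV}^{\rm nor}$ is smooth in a neighbourhood of $j(Y)$, and one takes $M=(\overline{\hV}^{\rm nor})_{\rm reg}$ and $\mu=\nu|_{M}$. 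Your reduction to $Y$ connected and your final identification of formal completions once smoothness is known are fine; what is missing is this normalization step, which replaces your unjustified equality $\hV=\widehat{(\overline{\hV})}_Y$.
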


(Observe that the commutativity of (\ref{triang}) implies that $j$ is a closed
immersion.)


When the equivalent conditions in Proposition \ref{algequ} conditions are
satisfied, we shall say that \emph{the smooth formal scheme $\hV$ is algebraic.}

\begin{proof} To prove the Proposition, we may assume that $Y$ is connected.

Suppose that (1) is satisfied, and consider the normalization
$$\nu: \overline{\hV}^{\rm nor} \lra  \overline{\hV}$$
of the integral scheme $\overline{\hV}$. We may consider the completion 
$\overline{\hV}^{\rm nor \wedge}_{\nu^{-1}(Y)}$ of $\overline{\hV}^{\rm nor}$
along $\nu^{-1}(Y)$. By the ``analytic normality of normal rings", it is a
normal formal scheme
\footnote{Namely, 
for any affine connected open  subscheme $U$ of $\vert \overline{\hV}^{\rm nor
\wedge}_{\nu^{-1}(Y)} \vert = \vert \nu^{-1}(Y) \vert$, the ring $\hat{A}$ of
sections over $U$ of the structure sheaf  of $\overline{\hV}^{\rm nor
\wedge}_{\nu^{-1}(Y)}$ is a normal domain. Actually, if $V$ denotes a connected
affine open subscheme of $\overline{\hV}^{\rm nor}$ such that $U= V \cap \vert
\nu^{-1}(Y) \vert,$ the ring $\hat{A}$ may be identified with the completion of
$A:= \Gamma(V, \cO_{\overline{\hV}^{\rm nor}})$ in the $I$-adic topology, where
$I$ denote the ideal of $A$ defining the closed subscheme $U$ of $V$. 
The fact that this completion is normal easily follows from the fact that, for
every closed point $x$ of $V$, the completion $\hat{A}_x$ of $A$ at $x$ is
itself a normal domain: this last fact is precisely the content of the
``analytic normality" of the normal ring $A$, as established for instance in
\cite{ZariskiSamuelII}, p. 313-320, or \cite{Nagata62}, Section 37.5. See for
instance \cite{Badescu04} p. 102 for more details.
}
 and there is a unique morphism of formal scheme 
$$\hat{j}: \hV \lra  \overline{\hV}^{\rm nor \wedge}_{\nu^{-1}(Y)}$$
such that the following diagram is commutative:
\begin{equation}\label{triangbis}
\xymatrix{ &  \overline{\hV}^{\rm nor \wedge}_{\nu^{-1}(Y)}
\ar[d]^{\hat{\nu}}\\ 
\hV \ar[r]^{\iota} \ar[ur]^{\hat{j}} & {\overline{\hV}}^{\wedge}_{Y}, }
\end{equation}
where $\iota: \hV \ra {\overline{\hV}}^{\wedge}_{Y}$ denotes the inclusion
morphism. The morphism $\hat{j}$ is a closed immersion, and at the level of
local rings, defines surjections of integral domains of the same dimension,
which are therefore isomorphisms.  (See \cite{Gieseker77}, Proof of  Theorem
4.3, p. 1148-1149, and \cite{Badescu04}, Proof of Theorem 9.16, p. 100-103, for
similar arguments.)

 In other words, if we let 
$$j:= \hat{j}_{\vert Y},$$
then $j$ defines a closed immersion of $Y$ in $\overline{\hV}^{\rm nor}$, its
image is a connected component of $\nu^{-1}(Y)$, and $\hat{j}$ establishes an
isomorphism from $\hV$ to the formal completion   $\overline{\hV}^{\rm nor
\wedge}_{j(Y)}$. 

This shows that $\overline{\hV}^{\rm nor}$ is smooth on some open neighborhood
of $j(Y)$, and Condition (2) is satisfied by the regular locus
$M:=\left(\overline{\hV}^{\rm nor}\right)_{\rm reg}$ of $\overline{\hV}^{\rm
nor}$ and 
$\mu := \nu_{\vert M}.$

Conversely, when Condition (2) holds, we may assume that $M$ is connected, after
possibly replacing it by its connected component containing $j(Y)$. The
isomorphism (\ref{muiso}) shows that $M$, like $\hV$, has dimension $d$.
Consequently $\overline{\mu(M)}$ has dimension at most $d$, and its subscheme
$\overline{\hV}$ also.
\end{proof}

\begin{theorem}\label{thalgnrample} With the above notation, if $Y$ is proper
over $k$ and a l.c.i. in $\hV$, with all components of dimension $\geq 1,$ and
if its normal bundle $N_Y \hV$ in $\hV$ is ample, then $\hV$ is algebraic.
\end{theorem}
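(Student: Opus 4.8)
The plan is to deduce Theorem~\ref{thalgnrample} from the algebraicity criterion of Proposition~\ref{algequ}, by verifying that the Zariski closure $\overline{\hV}$ of $\hV$ in $Z$ has dimension exactly $d=\dim\hV$. Since we already know $\dim\overline{\hV}\ge d$, the real content is the reverse inequality, and this is where a \emph{connectivity theorem \`a la Bertini--Fulton--Hansen} together with the \emph{ampleness} of the normal bundle enters. The strategy is the classical one going back to Hartshorne's work on formal functions and ample normal bundles: if $W:=\overline{\hV}$ had dimension $>d$, one would compare $W$ with the formal completion $\hW_Y$, which contains $\hV$; ampleness of $N_Y\hV$ forces the formal-rational functions on $\hW$ along $Y$ to be ``too large'' relative to a proper subvariety of dimension $d$, contradicting that $\hV$ — being l.c.i.\ in the smooth $\hV$ and proper with all components of dimension $\ge 1$ — cuts out $\overline{\hV}$ tightly.

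Concretely, first I would reduce to $Y$ connected (hence $\overline{\hV}$ reduced and irreducible by the remarks preceding Proposition~\ref{algequ}), and replace $Z$ by $\overline{\hV}$, so that $\hV$ is a smooth formal subscheme of $\hZ_Y$ that is Zariski-dense in the integral scheme $Z$; the goal becomes $\dim Z=d$. Next I would pass to the normalization $\nu:Z^{\mathrm{nor}}\to Z$ and, using analytic normality of normal local rings exactly as in the proof of Proposition~\ref{algequ}, lift $\hV$ to a formal subscheme of the completion of $Z^{\mathrm{nor}}$ along a connected component $Y'$ of $\nu^{-1}(Y)$; this does not change dimensions but lets us work with a normal (even locally factorial after further blow-up if needed) ambient scheme. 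The key geometric input is then the Fulton--Hansen-type connectivity statement recalled earlier in the Appendix, applied to a suitable correspondence built from $Y\hookrightarrow\hV$ and the diagonal: ampleness of $N_Y\hV$ makes the relevant incidence variety connected, which in turn forces any algebraic scheme containing $\hV$ to already be ``seen'' infinitesimally along $Y$ in codimension $0$, i.e.\ to have dimension $d$.

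The main obstacle — and the step I would spend the most care on — is making the passage from ``ample normal bundle'' to ``the Zariski closure cannot grow'' rigorous in this formal setting. The cleanest route is Hartshorne's theorem on the field of formal-meromorphic functions $K(\hV)$: when $Y$ is proper, l.c.i.\ in $\hV$ with all components of dimension $\ge 1$, and $N_Y\hV$ is ample, this field has transcendence degree exactly $d$ over $k$ (this is precisely the ``study by Hartshorne of the fields of meromorphic functions on some formal schemes'' cited in the Appendix's introduction). Granting that, the rational function field of $\overline{\hV}$ embeds into $K(\hV)$, so $\dim\overline{\hV}=\operatorname{tr.deg}_k k(\overline{\hV})\le\operatorname{tr.deg}_k K(\hV)=d$. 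Combined with $\dim\overline{\hV}\ge d$ we get equality, condition~(1) of Proposition~\ref{algequ} holds, and hence $\hV$ is algebraic. So the proof is essentially: invoke Hartshorne's transcendence-degree bound (the nontrivial ingredient, which is where ampleness is used and which the Appendix has set up to quote), then feed it into Proposition~\ref{algequ}.
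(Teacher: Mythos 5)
Your final paragraph is exactly the paper's proof: Hartshorne's bound $\degtr_k k(\hV)\leq d$ for the field of formal-meromorphic functions, combined with the injection $k(\overline{\hV})\hookrightarrow k(\hV)$ coming from the density of $\hV$ in $\overline{\hV}$, gives $\dim\overline{\hV}\leq d$, and Proposition \ref{algequ} concludes. The detours in your middle paragraph (normalization, Fulton--Hansen connectivity) are not needed for this theorem and play no role in the paper's argument here, but they do not affect the correctness of the route you ultimately take.
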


This theorem may be seen as an avatar, valid in algebraic geometry (possibly
over a field of positive characteristic) of the classical theorem of Andreotti
about the algebraicity of pseudo-concave complex analytic submanifolds of the
complex projective spaces (\cite{Andreotti63}, Th\'eor\`eme 6). 

The proof below relies on some upper bound, established by Harshorne
(\cite{Hartshorne68}), on the transcendence degree of the field of meromorphic
functions --- the ``formal-rational functions" investigated by  Hironaka and
Matsumura (\cite{Hironaka68}, \cite{HironakaMatsumura68}) --- on the formal
scheme $\hV$.  We refer the reader to \cite{Gieseker77}, Section 4,  for
geometric applications of Hartshorne's results in a similar spirit.
 
In a quasi-projective setting, Theorem \ref{thalgnrample} admits a simpler
proof, inspired by techniques of Diophantine approximation, which avoids the use
of formal meromorphic functions (\cite{Bost01}, Section 3.3). Actually, as shown
in  \cite{Chen12}, these techniques  lead to algebraization results valid under
assumptions significantly weaker that the ampleness of  $N_Y \hV$ when $\dim Y >
1.$

\begin{proof} We may assume that $\hV$ (or equivalently $Y$) is connected. Then
we may consider the field $k(\hV)$ of meromorphic functions on $\hV$, as
introduced in \cite{Hironaka68}, p. 600, and \cite{HironakaMatsumura68}, §1.
Recall that, as a ring, it is defined as  the space of global sections
$\Gamma(\hV, \mathcal{M}_{\hV})$ of the sheaf $\mathcal{M}_{\hV}$ on $\hV$
defined as the associated sheaf of the presheaf $\mathcal{M}^0_{\hV}$ defined
by 
sending an arbitrary open subset $U$ of $\hV$ (or equivalently of $Y$) to the
total ring of fractions $\mathcal{M}^0_{\hV}(U)$ of $\cO_{\hV}(U).$ In the
present situation where $\hV$ is assumed connected, $k(\hV)$ is actually a field
(for instance,  by a simple variant of the discussion in \cite{Hartshorne70},
pp. 189--190; see also \cite{Badescu04}, Chapter 9, for an exposition of the
basic properties of formal rational functions).

According to \cite{Hartshorne68}, Theorem 6.7, the transcendence degree
$\degtr_k k(\hV)$ of the field $k(\hV)$ is at most $d$.

Besides, the $k$-morphism 
$\hV \hra \overline{\hV}$ induces a morphism of extensions of $k$:
$$k(\overline{\hV}) \lra k(\hV).$$
Indeed, by the very definition of $\overline{\hV}$, for any open subscheme $U$
of $\overline{\hV}$ which meets $\vert Y \vert,$ the restriction morphism
$$\cO_{\overline{\hV}}(U) \lra \cO_{\hV}(U \cap \vert Y\vert)$$
is injective.

This implies that
$$\dim \overline{\hV} = \degtr_k k(\overline{\hV}) \leq \degtr_k k(\hV) \leq
d,$$
and establishes that $\hV$ is algebraic.
\end{proof}

\subsection{Connectedness and \'etale neighborhoods of intersections of ample
hypersurfaces} 

\subsubsection{A connectedness theorem}

The following theorem is an avatar of the classical connectedness theorem of
Fulton-Hansen \cite{FultonHansen79}. Actually, its special case   when the 
inclusion morphism $H_1 \cap \ldots \cap H_e \hra X$ is a regular imbedding of
codimension $e$, 
is alluded to by Fulton and Hansen in \cite{FultonHansen79}, p. 161. The general
case is essentially established by Debarre in \cite{Debarre96}.

\begin{theorem}\label{new} Let $X$ be an integral projective $k$-scheme, $e$ a
positive integer, and $H_1, \dots, H_e$ ample effective Cartier divisors on $X$.

For any integral proper $k$-scheme $X'$ and any $k$-morphism $f: X' \lra X$ such
that 
$$\dim f(X') > e,$$
the inverse image $f^{-1}(H_1 \cap \ldots \cap H_e)$ is a non-empty
\emph{connected} subscheme of $X'$.  
\end{theorem}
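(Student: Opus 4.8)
The statement is a Bertini–Fulton–Hansen–type connectivity result, so I would try to reduce it to the classical connectedness theorem of Fulton–Hansen via the standard diagonal trick, together with an induction that "peels off" the ample divisors one at a time. The key observation is that, for a single ample effective Cartier divisor $H$ on an integral projective scheme $X$ and a morphism $f : X' \to X$ from an integral proper scheme with $\dim f(X') \ge 2$, the preimage $f^{-1}(H)$ is non-empty and connected; the general case then follows by applying this to $X$ equipped with the restriction of $f$ over $H_1 \cap \dots \cap H_{e-1}$, provided one can keep the "integral proper, positive-dimensional image" hypotheses alive along the induction. The non-emptiness half is easy: if $f^{-1}(H) = \emptyset$ then $f(X')$ is a closed subscheme of $X \setminus H$, which is affine (as $H$ is ample), so $f(X')$ is both proper and affine, hence finite, contradicting $\dim f(X') > e \ge 1$.

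**Connectedness of $f^{-1}(H)$ in the base case.** Here I would invoke the Fulton–Hansen connectedness theorem in the following guise: if $W$ is an integral proper scheme and $g : W \to \PP^N \times \PP^N$ is a morphism with $\dim g(W) > N$, then $g^{-1}(\Delta)$ is connected, where $\Delta$ is the diagonal. To put ourselves in this situation, choose (after replacing $H$ by a suitable multiple, which changes neither $H$ as a set nor the connectivity of $f^{-1}(H)$) a projective embedding $X \hra \PP^N$ such that $H$ is cut out by a hyperplane $L$, i.e. $H = X \cap L$. Embed $\PP^N$ in $\PP^N \times \PP^N$ via $\mathrm{id} \times (\text{point of } L)$... more cleanly: consider $X' \xrightarrow{f} X \hra \PP^N$ composed with the morphism $\PP^N \to \PP^N\times\PP^N$, $x \mapsto (x, \ell_0)$ for a fixed $\ell_0 \in L$, and use that $f^{-1}(H)$ is the preimage of $\{(\ell, \ell') : \ell \in L\}$. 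Actually the standard realization is: $L \cong \PP^{N-1}$, and a hyperplane section is a codimension-one linear intersection, so iterating, $H_1 \cap \dots \cap H_e$ is (after re-embedding) a linear subspace $\Lambda$ of codimension $e$ in some $\PP^N$; then $f^{-1}(\Lambda)$ is the preimage of the diagonal under $X' \to \PP^N \times \PP^{N}$, $x' \mapsto (f(x'), \pi)$ where $\pi$ projects... — the clean statement I will use is Fulton–Hansen exactly as in \cite{FultonHansen79}: if $Y \to \PP^N$ is a proper morphism from an integral scheme with image of dimension $> e$, and $\Lambda \subset \PP^N$ is a linear subspace of codimension $e$, then $f^{-1}(\Lambda)$ is connected. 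Reducing ample hypersurface sections to linear sections after a Veronese re-embedding is exactly the mechanism that lets us apply this.

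**Carrying through the induction and the main obstacle.** Granting the codimension-one step, I would argue as follows: let $Z := f^{-1}(H_1 \cap \dots \cap H_{e-1})$, which by induction is non-empty and connected; I want $f^{-1}(H_1 \cap \dots \cap H_e) = (f|_Z)^{-1}(H_e)$ to be connected. The trouble is that $Z$ need not be integral, and $\dim f(Z)$ could drop all the way to $1$ or even $0$, in which case the base case does not apply directly. This is the step I expect to be the main obstacle, and I think the right fix is not to induct naively on the $H_i$ but to apply the Fulton–Hansen theorem \emph{once}, globally: re-embed so that $H_1 \cap \dots \cap H_e$ becomes a linear subspace $\Lambda$ of codimension $e$ in a single $\PP^N$, form the morphism $F : X' \to \PP^N \times \PP^N$ sending $x'$ to $(f(x'), \lambda_0)$ suitably — or, better, use the version of connectedness for intersection with a linear subspace of codimension $e$ under the single hypothesis $\dim f(X') > e$. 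Concretely: connectedness of $f^{-1}(\Lambda)$ for $\Lambda$ linear of codimension $e$ and $\dim f(X') > e$ is proved by expressing $\Lambda$ as an intersection of $e$ hyperplanes through a common point and applying the join/diagonal construction of Fulton–Hansen with $\PP^{N}$ replaced by a product, exactly the argument they sketch on p.~161 and which we are asked to supply. So the proof will: (i) dispose of non-emptiness by the affineness argument; (ii) reduce ample hypersurfaces to hyperplanes via a Veronese re-embedding, noting connectivity and the set $f^{-1}(H_1\cap\cdots\cap H_e)$ are unchanged; (iii) reduce to the Fulton–Hansen diagonal theorem by the standard trick of considering the morphism $X' \to \PP^N \times (\PP^N / \Lambda\text{-data})$ and identifying $f^{-1}(\Lambda)$ with a diagonal preimage, using $\dim f(X') > e$ to meet the dimension hypothesis of \cite{FultonHansen79}. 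The delicate bookkeeping — making sure the integrality and dimension hypotheses needed by Fulton–Hansen are met after these reductions, and handling the case $e \ge 2$ by writing $\Lambda$ as a cone-vertex intersection of hyperplanes — is where the real work lies.
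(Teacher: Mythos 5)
Your $e=1$ case is sound: the affineness argument for non-emptiness, the Veronese re-embedding turning an ample hypersurface section into a hyperplane section, and the appeal to Fulton--Hansen all work. But the case $e\geq 2$ --- the actual content of the theorem --- is not proved, and the one concrete reduction you propose for it fails. You want to re-embed $X$ so that $H_1\cap\cdots\cap H_e$ becomes a linear section $X\cap\Lambda$ with $\Lambda$ of codimension $e$ in a single $\PP^N$. That requires all the $H_i$ (or suitable multiples) to be hyperplane sections of one and the same embedding, i.e.\ to become linearly equivalent after rescaling; this is impossible whenever the classes of the $H_i$ are not proportional in $\mathrm{Pic}(X)\otimes\Q$ (e.g.\ $X=\PP^1\times\PP^1\times\PP^1$, $e=2$, $H_1$ of multidegree $(1,1,2)$, $H_2$ of multidegree $(2,1,1)$). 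Passing to the Segre embedding of $\prod_i\PP^{N_i}$ does not help, since a hyperplane section there has multidegree $(1,\ldots,1)$, not $(1,0,\ldots,0)$. You correctly diagnose that the naive induction on $e$ breaks (the intermediate intersection need not be integral and the image dimension can drop), but your fallback --- ``the join/diagonal construction with $\PP^N$ replaced by a product, exactly the argument they sketch on p.~161'' --- is essentially a citation of the statement to be proved: Fulton--Hansen only allude to a special case there, and their diagonal theorem for $\PP^n\times\PP^n$ does not formally extend to the subvariety $\Lambda_1\times\cdots\times\Lambda_e$ of a product of different projective spaces, because $\cO(1,0,\ldots,0)$ is only semiample, not ample, on the product. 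So the main step is missing.

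For comparison, the paper's proof avoids Fulton--Hansen entirely. A Stein factorization reduces to $X'=X$, $f=\mathrm{id}$ (once $f$ is finite, each $f^*H_i$ is either ample effective on $X'$ or has $f(X')\subset H_i$). One then embeds $X$ by the product of the very ample systems $|D_iH_i|$, forms the incidence correspondence $Z\to\prod_i|D_iH_i|$, whose fiber over the special point $(\xi_1^0,\ldots,\xi_e^0)$ is $D_1H_1\cap\cdots\cap D_eH_e$ (same support as $H_1\cap\cdots\cap H_e$), shows the generic fiber is geometrically irreducible by $e$ successive applications of Bertini, and concludes by Zariski's connectedness theorem (proper surjective onto a normal base with geometrically irreducible generic fiber has all fibers connected). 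This degeneration-to-the-generic-member mechanism is exactly what replaces the simultaneous linearization you were hoping for; some such Zariski/Stein argument is needed in any case to pass from generic members of the $|D_iH_i|$ back to the given, possibly very special, divisors.
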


For every $i \in \{1, \dots, e\}$, we may choose a positive integer $D_i$ such
that the Cartier divisor $D_i H_i$ is very ample. In other terms, there exists a
projective embedding $\iota_i : X \lra \PP_k^{N_i}$ and an hyperplane $L_i$ in
$\PP_k^{N_i}$, which does not contain $\iota_i(X)$, such that the following
equality of divisors hold:
$$D_i H_i = \iota_i^{-1}(L_i).$$

Then, if we let 
$$L:= \prod_{1 \leq i \leq e} L_i,$$
we have
\begin{equation*} 
\begin{split}
 \vert H_1 \cap \dots \cap H_e \vert & = \vert D_1 H_1 \cap \dots \cap D_e H_e
\vert \\
 & = \vert \iota_1^{-1} (L_1) \cap \dots \cap \iota^{-1}_e(L_e) \vert \\
 & = \vert (\iota_1, \dots, \iota_e)^{-1}(L)\vert.
\end{split}
\end{equation*}  
Consequently the support of $f^{-1}(H_1 \cap \ldots \cap H_e)$ coincides with
the one of the inverse image of $L$ by the composite morphism
$$X'\stackrel{f}{\lra} X \stackrel{(\iota_1,\dots, \iota_e)}{\lra}\prod_{1\leq i
\leq e} \PP_k^{N_i}.$$

 Theorem \ref{new} is easily seen to follow from \cite{Debarre96}, Th\'eor\`eme
1.4, 2) a), applied to this morphism and to the subspace $L$ of $\PP:=
\prod_{1\leq i \leq e} \PP_k^{N_i}.$

For the sake of completeness, we sketch a proof of Theorem \ref{new} from
``basic results" in algebraic geometry, at the level of Harsthorne's textbook
\cite{Hartshorne83} and Jouanolou's monograph \cite{Jouanolou83}. We also refer
the reader to  \cite{FultonLazarsfeld81b} for a beautiful discussion of related
connectedness results. 
(Actually, the proof of Th\'eor\`eme 1.4 in \cite{Debarre96} relies  on results
and techniques presented in   \cite{FultonLazarsfeld81b}.) 

\begin{proof} By considering the Stein factorization of $f$ (see \cite{EGAIII1},
Section III.4.3), we may assume that $f$ is a \emph{finite} morphism. Then, for
every $i \in \{1,\ldots,e\},$ the following alternative holds: either $f(X')
\subset H_i,$ or $f^{\ast}(H_i)$ defines an ample effective divisor in $X'$.
Moreover, if $I$ denotes the set of elements $i$ in $\{1, \dots, e\}$ such that
the second case arises, we clearly have:
$$f^{-1}(H_1 \cap \ldots \cap H_e) = \bigcap_{i \in I} f^{-1}(H_i).$$  

These observations show that, to establish Theorem \ref{new} in full generality,
it is enough to establish the following assertion, which is actually the special
case of Theorem \ref{new} when $X'=X$ and $f=Id_X$:

\emph{ Let $X$ be an integral projective $k$-scheme, $e$ a positive integer, and
$H_1, \dots, H_e$ ample effective Cartier divisors on $X$. If $\dim X > e$, then
$H_1 \cap \ldots \cap H_e$ is a connected subscheme of $X$.}

To establish this assertion, let us choose, for every $i \in \{1, \ldots, e\}$,
a positive integer $D_i$ such that the line bundle $\cO_X(D_i H_i)$ is very
ample, and let us consider the associated projective embedding\footnote{We use
Grothendieck's notation: the projective space $\PP(\Gamma(X, \cO_X(D_iH_i))$
parametrizes hyperplanes in  $\Gamma(X, \cO_X(D_iH_i))$.}
$$f_i : X \lra \PP(\Gamma(X, \cO_X(D_iH_i)) \simeq \PP_k^{N_i}$$
(where $N_i := \dim_k \Gamma(X, \cO_X(D_iH_i) - 1$) and the product imbedding
$$f:= (f_1, \ldots, f_e) : X \lra \PP := \prod_{1\leq i \leq e} \PP(\Gamma(X,
\cO_X(D_iH_i)) \simeq \prod_{1\leq i \leq e} \PP_k^{N_i}.$$

As usual, we denote $\vert D_i H_i \vert$ the projective space dual to
$\PP(\Gamma(X, \cO_X(D_iH_i))$. We also consider the incidence correspondences
 \begin{equation*}
\xymatrix{ & I_i \ar[dl]_{p_i} \ar[dr]^{q_i} &\\ 
\PP_k^{N_i} & &  \vert D_i H_i \vert}
\end{equation*}
and their product
\begin{equation}\label{prodincid}
\xymatrix{ & \prod_{i=1}^e I_i \ar[dl]_{p} \ar[dr]^{q} &\\ 
\PP & &   \prod_{i=1}^e\vert D_i H_i \vert.}
\end{equation}

By ``restriction" to $X$  of the correspondence (\ref{prodincid}) by means of
the imbedding $f: X \hlra \PP,$ we get a correspondence: 
\begin{equation*}
\xymatrix{ & Z:= X \times_{\PP} \prod_{i=1}^e I_i  \ar[dl]_{p_Z} \ar[dr]^{q_Z}
&\\ 
X & &   \prod_{i=1}^e\vert D_i H_i \vert.}
\end{equation*}

The morphism $p$, and consequently $p_Z$, is a ``Zariski locally trivial
fibration", the fibers of which are  isomorphic to $\prod_{i=1}^e \PP_k^{N_i
-1}.$ Therefore $Z$ is an integral projective scheme of dimension
$$\dim Z = \dim X + \sum_{i=1}^e N_i - e.$$
Moreover the morphism $q_Z$ is surjective; indeed $\dim X \geq e$, and therefore
any $e$-tuple of ample divisors on $X$ has a non-empty intersection.  Since 
$$\dim   \prod_{1\leq i \leq e}\vert D_i H_i \vert = \sum_{i=1}^e N_i,$$
the generic fiber of $q_Z$ has dimension $\dim X - e.$ 

 Observe that, if $\xi_i^0$ denotes the point of $\vert D_i H_i\vert (k)$
defined by the Cartier divisor $D_i H_i$, the intersection $D_1 H_1 \cap \ldots
\cap D_e H_e$  coincides, as a scheme, with the fiber $q_Z^{-1}(\xi_1^0, \ldots,
\xi_e^0).$ The connectedness of $H_1 \cap \ldots \cap H_e$ will therefore follow
from the connectedness of the fibers of $q_Z$, that we shall establish by means
of the following classical result, basically due to Zariski (see for instance 
\cite{Jouanolou83}, Part I, Section 4 and proof of Théorème 7.1; recall that, in
this Appendix, the base field $k$ is algebraically closed):
 
\begin{lemma}\label{ZariskiJouanolou}  Let $\phi : X_1 \lra X_2$ be a dominant
morphism of integral $k$-schemes.

a) The following conditions are equivalent:
\begin{enumerate}
\item the generic fiber of $\phi$ is geometrically irreducible;
\item there exists a dense open subscheme $U$ of $X_2$ such that, for any $P$ in
$U(k)$, the fiber $\phi^{-1}(P)$ is irreducible;
\item there exists a Zariski dense subset $\mathcal{D}$ of $X_2(k)$ such that,
for any $P$ in $\mathcal{D}$, the fiber $\phi^{-1}(P)$ is irreducible.
\end{enumerate}

b) Assume moreover that $\phi$ is proper and surjective and that $X_2$ is
normal. Then, when the conditions (1)--(3) above are satisfied, the fiber
$\phi^{-1}(P)$ is connected for every $P$ in $X_2(k)$. 
\end{lemma}

According to Lemma \ref{ZariskiJouanolou}, to complete the proof of the
connectedness of the fibers of $q_Z$, it is enough to show the existence of a
Zariski dense set of points $(\xi_1, \dots,\xi_e)$ in $\prod_{1\leq i\leq e}
\vert D_i H_i\vert (k)$ such that the fibers $q_Z^{-1}(\xi_1, \dots,\xi_e)$ are
irreducible, or equivalently, such that the schemes 
$$X \cap f_1^{-1}(\Xi_1) \cap \ldots \cap f_e^{-1}(\Xi_e)$$
--- where $\Xi_i$ denotes the hyperplane in $\PP(\Gamma(X, \cO_X(D_iH_i))$
parametrized by $\xi_i$ --- are irreducible. 

This follows by applying $e$-times  the usual Theorem of Bertini (see for
instance \cite{Jouanolou83}, Part I, Théorème 6.3, 4), Théorème 6.10, 3) and
Corollaire 6.11, 3)) to construct successively $\xi_1,$ \ldots, $\xi_e$ such
that, for every $i \in \{1, \ldots, e\},$ the intersection scheme
$$X_i := X\cap f_1^{-1}(\Xi_1) \cap \ldots \cap f_i^{-1}(\Xi_i)$$
is geometrically irreducible of dimension $\dim X - i.$  (Observe that, if
$i\leq e-1$, then $\dim X - i \geq 2$. This allows us to apply the usual Theorem
of  Bertini to $X_i$ projectively imbedded by $f_{i+1}$, and, the points
$\xi_1,\ldots,\xi_i$ being already constructed, to find a Zariski dense set of
points $\xi_{i+1}$ in   $\vert D_{i+1} H_{i+1}\vert (k)$ such that $X_i \cap
f_{i+1}^{-1}(\Xi_{i+1})$ is irreducible.)

 \end{proof}

\subsubsection{Application to \'etale neighborhoods of intersections of ample
hypersurfaces} Let $Y$ be closed subscheme of some $k$-scheme $X$, and denote
$i:Y \hra X$ the inclusion morphism. By definition, an \emph{\'etale
neighborhood} of $Y$ in $X$ is a commutative diagram of $k$-schemes:
\begin{equation}\label{etnbd}
\xymatrix{ & \tilde{X}\ar[d]^\nu \\
Y \ar[ur]^j\ar[r]^i & X,}
\end{equation}
with $\nu$ \'etale at every point of $j(Y)$. 

Observe that the commutativity of the diagram (\ref{etnbd}) implies that $j$ is 
a closed immersion. Moreover $\nu$ is \'etale at every point of $j(Y)$ if and
only if the morphism of formal completions induced by $\nu$,
\begin{equation}\label{nufor}
\hat{\nu}: \widehat{\tilde{X}}_{j(Y)} \lra\widehat{X}_Y, 
\end{equation}    is an isomorphism of formal schemes. (See for instance
\cite{Gieseker77}, Section 4.) 

Besides, after replacing $\tilde{X}$ by some open Zariski neighborhood of
$j(Y)$, we may assume that $\nu$ is an \'etale morphism. If moreover $X$ and
$\tilde{X}$ are integral, 
furthermore $X$ and $\tilde{X}$ are connected (hence integral), then 
then $\nu$ is birational iff it is an open immersion (\cf \cite{EGAIV4}, §18,
Lemme 18.10.18, p. 166).

Observe also that \'etale neighborhoods in $X$ of $Y$ and of the underlying
reduced scheme $\vert Y \vert$ may be identified. 

\begin{proposition}\label{etZar} Let $X$ be an integral proper $k$-scheme of
dimension $d$, and let $H_1,\ldots,$ $H_e$, $1\leq e \leq d-1$, be ample
effective Cartier divisors in $X$. Let us assume that $X$ is normal at every
point of $H_1 \cap \cdots \cap H_e$.

Then, for any \'etale neighborhood of $H_1 \cap \cdots \cap H_e$ in $X$
\begin{equation}\label{etnbdint}
\xymatrix{ & \tilde{X}\ar[d]^\nu \\
H_1 \cap \cdots \cap H_e \ar[ur]^j\ar[r]^{\;\;\;\;\;\;\;\;\;i} & X,}
\end{equation}
there exists a Zariski open neighborhoods $U$ of $H_1 \cap \cdots \cap H_e$ in
$X$ (resp. $\tilde{U}$ of 
of $j(H_1 \cap \cdots \cap H_e)$ in $\tilde{X}$)
between which $\nu$ establishes an isomorphism:
$$\nu_{\tilde{U}}: \tilde{U} \lrasim U.$$
\end{proposition}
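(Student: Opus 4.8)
The goal is to algebraize the étale neighborhood $\nu\colon\tilde X\to X$ near $W:=H_1\cap\cdots\cap H_e$, i.e.\ to show $\nu$ is an isomorphism onto a Zariski neighborhood of $W$. The plan is to reduce this to the connectivity theorem of the previous subsection. First I would shrink $\tilde X$: after replacing $\tilde X$ by an open Zariski neighborhood of $j(W)$ we may assume $\nu$ itself is étale, and after replacing $\tilde X$ by the connected component of $j(W)$ (note $W$ is connected by Theorem \ref{new}, since $e\le d-1$ forces $\dim X>e$, and is nonempty) we may assume $\tilde X$ is connected; since $\nu$ is étale onto the normal scheme $X$ near $W$, $\tilde X$ is normal near $j(W)$, so after a further shrinking $\tilde X$ is normal and connected, hence integral.

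Next I would produce a compactification to which Theorem \ref{new} applies. Take $\overline{\tilde X}$ a projective integral $k$-scheme containing $\tilde X$ as a dense open, and let $\overline\nu\colon\overline{\tilde X}\dashrightarrow X$ be the rational map extending $\nu$; by blowing up (or passing to the closure of the graph of $\nu$ in $\overline{\tilde X}\times X$) we may assume $\overline\nu$ is an honest morphism $f\colon\overline{\tilde X}\to X$, still with $\tilde X\subset\overline{\tilde X}$ dense open and $f|_{\tilde X}=\nu$. Because $\nu$ is étale at every point of $j(W)$, the morphism $f$ maps $j(W)$ isomorphically onto $W$; in particular $f$ is dominant, so $\dim f(\overline{\tilde X})=\dim X=d>e$. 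Now apply Theorem \ref{new} with $X'=\overline{\tilde X}$ and the $e$ ample divisors $H_1,\dots,H_e$: the scheme $f^{-1}(W)=f^{-1}(H_1\cap\cdots\cap H_e)$ is nonempty and \emph{connected}.

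The key point is then that $f^{-1}(W)$ contains $j(W)$ as an open and closed piece: near $j(W)$ the map $f$ is étale onto $X$, so $j(W)$ is open in $f^{-1}(W)$ (it is the preimage of $W$ under an étale map, hence a union of components of the étale locus over $W$ meeting the fibre transversally), and $j(W)$ is also closed in $f^{-1}(W)$ since $W$ is proper and $j$ is a closed immersion. By connectedness of $f^{-1}(W)$ we conclude $f^{-1}(W)=j(W)$, so $f$ has no other points over $W$. Consequently $\nu$ is quasi-finite over a neighborhood of $W$ and restricts to a bijection $j(W)\to W$; being étale there, $\nu$ is in fact a local isomorphism along $j(W)$. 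Finally I would invoke that an étale morphism between integral schemes which is birational (which $\nu$ is, as $f$ maps $j(W)$ isomorphically onto $W$ so $\nu$ has degree one) is an open immersion, by \cite[§18, Lemme 18.10.18]{EGAIV4} as cited above: hence there is a Zariski open $\tilde U\ni j(W)$ in $\tilde X$ with $\nu|_{\tilde U}$ an open immersion onto some Zariski open $U\ni W$ in $X$, which is the desired isomorphism $\nu_{\tilde U}\colon\tilde U\xrightarrow{\sim}U$.

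\textbf{Main obstacle.} The delicate step is the reduction to a \emph{proper} $X'$ together with checking that $f^{-1}(W)$ has no component disjoint from $j(W)$ — i.e.\ controlling what the compactification $\overline{\tilde X}\setminus\tilde X$ and the resolution of $\overline\nu$ contribute over $W$. One must argue that $j(W)$ really is open in $f^{-1}(W)$ using étaleness of $\nu$ near $j(W)$ (which survives the compactification since that locus is untouched) and closed using properness of $W$; then connectedness from Theorem \ref{new} does the rest. The birationality/degree-one claim needed for the final $18.10.18$ step also deserves care: it follows because $\nu$, being étale and inducing $j(W)\cong W$ with $f^{-1}(W)=j(W)$, has a section over a neighborhood of $W$ and $\tilde X$ is connected, forcing degree one.
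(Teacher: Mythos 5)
Your proposal is correct and follows essentially the same route as the paper's proof: shrink $\tilde X$ to an integral normal neighborhood, compactify via Nagata and the closure of the graph of $\nu$, apply the connectivity theorem (Theorem \ref{new}) to $f^{-1}(H_1\cap\cdots\cap H_e)$, show $j(H_1\cap\cdots\cap H_e)$ is open and closed therein to conclude it is all of the preimage, and then descend to a Zariski isomorphism. The only (harmless) variations are that you justify openness of $j(W)$ in $f^{-1}(W)$ via étaleness (a section of an unramified morphism is an open immersion) where the paper uses the isomorphism of formal completions, and you finish with the étale-plus-birational criterion of \cite{EGAIV4} where the paper removes the non-étale locus and uses that a proper étale map which is an isomorphism over $W$ is an isomorphism near $W$.
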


In brief, Proposition \ref{etZar} asserts that an \'etale neighborhood of the
intersection of at most $d-1$ ample hypersurfaces in a normal projective variety
of dimension $d$ ``is" actually a Zariski neighborhood. 

When $d=2$ and $e=1$,  Proposition \ref{etZar} is essentially Proposition 2.2 in
\cite{BostChambert-Loir07}.  In that case, the connectedness result on which its
proof relies (namely, the special case of Theorem \ref{new} with $f$ a dominant
morphism between two projective surfaces $X'$ and $X$ and $e=1$) directly
follows from Hodge Index Theorem, by   Ramanujam's argument which shows that an
effective, nef and big divisor on a surface is numerically connected
(\cite{Ramanujam72}). 

\begin{proof}  The normality assumption on $X$ and the existence of the
isomorphism (\ref{nufor}) imply that  $\tilde{X}$ is normal on some Zariski
neighborhood of $j(Y)$. Therefore, after possibly shrinking $\tilde{X}$, we may
assume that $\tilde{X}$ is a normal scheme. According to Theorem  \ref{new}, the
scheme  $$ Y:=H_1 \cap \cdots \cap H_e$$ is connected. Consequently, its image
$j(Y)$ lies in a unique component of the normal scheme $\tilde{X}$. We may
therefore assume that $\tilde{X}$ is integral. 

By Nagata's compactification theorem, we may assume that $\tilde{X}$ is an open
subscheme of some integral proper $k$-scheme $\overline{\tilde{X}}$. After
replacing  $\overline{\tilde{X}}$ by the closure in $\overline{\tilde{X}} \times
X$ of the graph of $\nu$, we may also assume that $\nu$ extends to a morphism
$\overline{\nu}:  \overline{\tilde{X}}\ra X$.

So we may --- and will --- assume that, in (\ref{etnbd}), the scheme $\tilde{X}$
is \emph{integral and proper} over $k$.

The intersection $H_1 \cap \cdots \cap H_e$ is not empty, and $\nu$ is therefore
\'etale at some point of $\tilde{X}$. The morphism $\nu$ is therefore dominant
and its image has dimension $d > e.$ Therefore, according to Theorem \ref{new}, 
the closed subscheme
$\nu^\ast (Y) = \nu^\ast(H_1 \cap \cdots \cap H_e)$
of $\tilde{X}$ is connected.

The commutativity of the diagram (\ref{etnbdint})  shows that the subscheme
$\nu^\ast(Y)$ of $\tilde{X}$ contains $j(Y)$. Moreover, since $\nu$ defines an
isomorphism $$\hat{\nu}: \widehat{\tilde{X}}_{j(Y)} \lrasim\widehat{X}_Y,$$ the
formal subschemes of $\widehat{\tilde{X}}_{j(Y)}$ defined by completing
$\nu^\ast(Y)$ along $j(Y)$ is nothing but $j(Y)$. 

This implies that the trace of $\nu^\ast (Y)$ on some Zariski open neighborhood
of $j(Y)$ in $\tilde{X}$ coincides with $j(Y)$. (This follows form the basic
faithfulness properties of the functor of completion along a closed subscheme in
a some noetherian scheme; see for instance \cite{EGAI}, Propositions 10.8.8 and
10.8.11 and their corollaries.)
 In other words, $\nu^\ast (Y)$ may be written as a disjoint union
$$\nu^\ast (Y) = j(Y) \amalg R,$$
for some closed subscheme $R$ of $\tilde{X}$.

Together with the connectedness of  $\nu^\ast (Y)$, this shows that 
$$\nu^\ast (Y) = j(Y).$$

Consider the set $F$ of points of $\tilde{X}$ where $\nu$ is not \'etale. It is
closed in $\tilde{X}$, and disjoint of 
$$\nu^{-1} (Y) = \vert j(Y)\vert.$$
Therefore 
$$U:= X \setminus \nu(F)$$
is an open Zariski neighborhood of $Y$ in $X$, and the restriction morphism
$$\nu_{\vert \nu^{-1}(U)}: \nu^{-1}(U) \lra U$$
is proper, \'etale, and an isomorphism over the non-empty subscheme $Y$, hence
an isomorphism.
\end{proof}

\subsection{Algebraization of formal morphisms}

The following theorem provides criteria for a \emph{formal} morphism, defined on
the completion $\hX_Y$ along a closed subscheme $Y$ of a proper $k$-scheme $X$
with range some $k$-scheme,  to be defined by some morphism of $k$-schemes
defined on some \'etale or Zariski neighborhood of $Y$ in $X$.

\begin{theorem}\label{algmor} Let $X$ be an integral projective $k$-scheme of
dimension $d$, $Y$ a closed subscheme of $X$, $T$ a  $k$-scheme, and $\psi:
\hX_Y \ra T$ a morphism of $k$-formal schemes\footnote{Recall that $k$-schemes 
may be identified to $k$-formal schemes, which actually are noetherian, and
separated over $k$. In particular,  we may consider a morphism from a $k$-formal
scheme, defined for instance as a completion, to  some $k$-scheme: it is simply
a morphism in the category of $k$-formal schemes. Such a morphism is nothing but
a morphism in the category of $k$-locally ringed spaces (the continuity
conditions are automaticaly satisfied, since the topological sheaf of ring of a
scheme interpreted as a formal scheme is discrete). 
In turn, a $k$-morphism from the completion $\hX_Y$ to the $k$-scheme $T$ may be
identified with a compatible system of morphisms of $k$-schemes from the
successive infinitesimal neighborhoods $Y_i$ of $Y$ in $X$ to $T$.}.

Let us assume that $Y$  is a local complete intersection contained in the
regular locus $X_{\rm reg}$ of $X$.

1) If the normal bundle $N_Y X$ of $Y$ in $X$ is ample on $Y$ and if every
component of $Y$ has dimension $\geq 1,$ then there exist an \'etale
neighborhood of $Y$ in $X_{\rm reg}$
\begin{equation}\label{etnbdreg}
\xymatrix{ & \tilde{X}\ar[d]^\nu \\
Y \ar[ur]^j\ar[r]^i & X_{\rm reg},}
\end{equation}
(where $i$ denotes the inclusion morphism) and a morphism of $k$-schemes 
$$\phi: \tilde{X} \lra T$$
such that the $k$-morphisms of formal schemes induced by $\nu$ and $\phi$, 
$$\hat{\nu}: \widehat{\tilde{X}}_{j(Y)} \lrasim \hX_Y$$
and 
$$\hat{\phi}: \widehat{\tilde{X}}_{j(Y)} \lra T,$$ make the following diagram
commutative:
\begin{equation}\label{commnor}
\xymatrix{ 
\widehat{\tilde{X}}_{j(Y)}\ar[d]^{\hat{\nu}}_{\simeq}\ar[rd]^{\hat{\phi}} & 
 \\
{\hX_Y}\ar[r]^\psi & T.}
\end{equation}

2) If there exist some ample effective Cartier divisors $H_1,\ldots,H_e$, $1\leq
e \leq d-1$, in $X$ such that $Y$ has dimension $d-e$ and may be written as the
complete intersection 
$$Y =  H_1 \cap \cdots \cap H_e,$$
then there exists an open Zariski neighborhood $U$ of $Y$ in $X$ and a morphism
of $k$-schemes 
$$\phi : U \lra T$$
such that the morphism of $k$-formal schemes induced by $\phi$,
$$\hat{\phi}: \hX_Y = \widehat{U}_Y \lra T ,$$
coincides with $\psi.$
\end{theorem}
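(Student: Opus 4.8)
The plan is to deduce Theorem~\ref{algmor} from the algebraization results for formal subschemes (Proposition~\ref{algequ}, Theorem~\ref{thalgnrample}) by passing to the graph of the formal morphism~$\psi$, and then to upgrade the resulting \'etale neighborhood to a Zariski one in case~2) by invoking Proposition~\ref{etZar}. First I would set up the graph: let $\Gamma_\psi \subset \hX_Y \times_k T$ be the graph of $\psi$, more precisely the formal completion along $Y$ (embedded via $(i,\psi\circ\text{incl})$) of a suitable ambient scheme. To make the ampleness hypothesis usable, I would work inside $Z := X \times_k T$ (or rather $X_{\rm reg}\times_k T$), let $Y' \subset Z$ be the image of $Y$ under $(i,\psi|_Y)$ — a closed subscheme isomorphic to $Y$ — and let $\hV \subset \hZ_{Y'}$ be the formal subscheme $(\mathrm{id},\psi)(\hX_Y)$, i.e. the image of the formal completion $\hX_Y$ under the closed immersion $(\mathrm{id}_X,\psi): \hX_Y \hra \hZ_{Y'}$ (this is a closed immersion precisely because its first component is). Then $\hV$ is formally smooth over $k$ of pure dimension $d' := \dim Y + (\text{relative dimension})$... actually $\hV \cong \hX_Y$ via the first projection, so $\hV$ is smooth of the same dimension as $\hX_Y$, which since $Y \subset X_{\rm reg}$ and $Y$ is l.c.i.\ is $\dim X = d$ in case~1), and $d$ again in case~2).

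Next I would check that the normal bundle $N_{Y'}\hV$ is ample on $Y'$: since $\hV \cong \hX_Y$ compatibly with the embeddings of $Y' \cong Y$, the normal bundle $N_{Y'}\hV$ is identified with $N_Y X$ (the first projection $Z \to X$ restricts to an isomorphism $\hV \lrasim \hX_Y$ carrying $Y'$ to $Y$). In case~1) this is ample by hypothesis and every component of $Y'\cong Y$ has dimension $\ge 1$, $Y'$ is proper over $k$, and $Y'$ is l.c.i.\ in $\hV$; so Theorem~\ref{thalgnrample} applies and tells us $\hV$ is algebraic. By Proposition~\ref{algequ} there is then a commutative triangle with $M$ smooth over $k$, $j': Y' \hra M$ a closed immersion, $\mu: M \to Z$ with $\hat\mu: \hM_{j'(Y')} \lrasim \hV$. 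Composing $\mu$ with the two projections of $Z = X_{\rm reg}\times_k T$ gives $\nu := \pr_X\circ\mu : M \to X_{\rm reg}$ and $\phi := \pr_T\circ\mu: M \to T$; identifying $Y$ with $Y'$ with $j'(Y')$, the map $\hat\nu$ is an isomorphism $\hM_{j(Y)}\lrasim \hX_Y$ (because $\pr_X$ restricted to $\hV$ is the isomorphism $\hV \lrasim \hX_Y$ and $\hat\mu$ is an isomorphism onto $\hV$), so $\nu$ is \'etale along $j(Y)$, i.e.\ $M$ together with $\nu$ is an \'etale neighborhood; and $\hat\phi = \psi\circ\hat\nu$ by construction, which is exactly the commutativity of~\eqref{commnor}. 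Setting $\tilde X := M$ (shrunk to an open neighborhood of $j(Y)$) finishes part~1).

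For part~2), I would run the same graph construction — now $Y = H_1\cap\cdots\cap H_e$ with $1\le e\le d-1$ inside $X$ (no longer needing to restrict to $X_{\rm reg}$ for the hypotheses, though $Y\subset X_{\rm reg}$ is still given). Here I would not invoke Theorem~\ref{thalgnrample} directly but rather observe that the ample divisors $H_i$ pull back under $\pr_X: Z \to X$, no — more to the point, I would first apply part~1) (or rather its proof) with the ampleness of $N_YX$: note that since $Y$ is a complete intersection of the ample divisors $H_1,\dots,H_e$, its normal bundle is $N_YX \cong \bigoplus_i \sO_X(H_i)|_Y$, which is ample on $Y$; and if $e \le d-1$ then $\dim Y = d - e \ge 1$, so every component of $Y$ has dimension $\ge 1$. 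Thus part~1) furnishes an \'etale neighborhood $\nu: \tilde X \to X_{\rm reg} \subset X$ of $Y$ and $\phi: \tilde X\to T$ with $\hat\phi = \psi\circ\hat\nu$. Now I would apply Proposition~\ref{etZar} — whose hypotheses ($X$ integral proper of dimension $d$, the $H_i$ ample effective Cartier divisors, $1\le e\le d-1$, $X$ normal at every point of $Y$ since $Y\subset X_{\rm reg}$) are exactly met — to the \'etale neighborhood $(\tilde X,\nu,j)$ of $H_1\cap\cdots\cap H_e$: it produces open Zariski neighborhoods $\tilde U \ni j(Y)$ and $U \ni Y$ with $\nu_{\tilde U}: \tilde U \lrasim U$ an isomorphism. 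Then $U$ and $\phi\circ(\nu_{\tilde U})^{-1}: U \to T$ are the desired Zariski neighborhood and morphism; since $\hat\nu$ already restricted to an isomorphism $\widehat{\tilde U}_{j(Y)} = \widehat{\tilde X}_{j(Y)} \lrasim \hX_Y = \widehat U_Y$ and $\hat\phi = \psi\circ\hat\nu$, the induced $\hat\phi$ on $\widehat U_Y$ coincides with $\psi$.

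The main obstacle I anticipate is the careful bookkeeping in the graph construction — verifying that $\hV := (\mathrm{id},\psi)(\hX_Y)$ is genuinely a closed \emph{formal} subscheme of $\hZ_{Y'}$ admitting $Y'$ as scheme of definition and is formally smooth of the right dimension, and that the normal bundle identification $N_{Y'}\hV \cong N_Y X$ is correct — together with tracking that $Y'$ (the graph of $\psi|_Y$, which factors through $X_{\rm reg}\times T$) is l.c.i.\ in $\hV$, which follows since it is isomorphic via $\pr_X$ to the l.c.i.\ subscheme $Y \subset X_{\rm reg}$ and $\hV$ is regular. A secondary subtlety is matching dimensions: $\hV$ has the dimension of $\hX_Y$, which equals $\dim X = d$ because $X$ is regular along $Y$ (so $\hX_Y$ is formally smooth of dimension $d$ over $k$); one must make sure the hypotheses of Theorem~\ref{thalgnrample} are stated for $\hV$ with $Y' \subset \hV$ an l.c.i.\ of dimension $\ge 1$ in each component and ample normal bundle, not for $Y \subset X$. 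Once these identifications are in place, both parts follow formally from the cited results.
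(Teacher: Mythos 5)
Your proposal is correct and follows essentially the same route as the paper: form the graph of $\psi$ inside $Z = X\times_k T$, identify the resulting formal subscheme $\hV$ with $\hX_Y$ via the first projection so that $N_{i'(Y)}\hV\simeq N_YX$ is ample, apply Theorem~\ref{thalgnrample} and Proposition~\ref{algequ} to algebraize, and recover $\nu,\phi$ from the two projections; part 2) is then reduced to part 1) via the ampleness of $\bigoplus_i\sO_X(H_i)|_Y$ and upgraded to a Zariski neighborhood by Proposition~\ref{etZar}, exactly as in the paper.
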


\begin{proof} Let us assume that the hypotheses of 1) are satisfied. Then we may
introduce the $k$-scheme
$$Z:= X\times_k T,$$
and consider $Y$ as a subscheme of $Z$, by means of the closed immersion
$$i':=(i,\psi_{\vert Y}): Y \lra Z.$$

The graph of $\psi$, seen as a $k$-morphism of formal schemes from $\hX_Y$ to
$\hT_{\psi(Y)}$, defines a closed formal subscheme $\hV$ of $\hZ_{i'(Y)}$
admitting $i'(Y)$ as scheme of definition. The projections
$$\pr_1: Z \lra X  \;\mbox{ and }\;  \pr_2: Z \lra T $$
define, after completion, morphisms of formal schemes
$$\widehat{\pr}_1: \hZ_{i'(Y)} \lra \hX_Y  \;\mbox{ and }\; \widehat{\pr}_2:
\hZ_{i'(Y)} \lra \hT_{\psi(Y)}.$$
By restriction to $\hV$, they define an isomorphism of formal schemes
$$\widehat{\pr}_{1\vert \hV}: \hV \lrasim \hX_Y$$
and a morphism
$$\widehat{\pr}_{2\vert \hV}: \hV \lra \hT_{\psi(Y)}$$
such that 
$$\widehat{\pr}_{2\vert \hV}=
\psi \circ \widehat{\pr}_{1\vert \hV}.$$
Clearly the restriction of $\widehat{\pr}_{1\vert \hV}$ to the scheme of
definition $i'(Y)$ of $\hV$ is the first projection
\begin{equation}\label{prY}
\pr_{1\vert i'(Y)}: i'(Y) \lrasim Y.
\end{equation}

Consequently $\hV$ is smooth and $i'(Y)$ is a l.c.i. in $\hV$, and through the
isomorphism (\ref{prY}), the normal bundle $N_{i'(Y)}\hV$ gets identified to
$N_Y{\hX_Y} \simeq N_YX,$ hence is ample.

According to Theorem \ref{thalgnrample}, the formal subscheme $\hV$ is
algebraic. By Proposition \ref{algequ}, there exists a commutative diagram of
$k$-schemes
$$
\xymatrix{ & M \ar[d]^\mu\\ 
Y\ar[r]_{(i,\psi_{\vert Y})\;\;\;\;\;} \ar[ur]^j &  X\times_k T}
$$
with $M$ smooth over $k$, such that, after completing along $j(Y)$, $\mu$
becomes an isomorphism:
$$\hat{\mu}: \hM_{j(Y)} \lrasim \hV \,(\hra \hZ_{i'(Y)}).$$

Let us consider the open subscheme of $M$:
$$\tilde{X} := (\pr_1 \circ \mu)^{-1}(X_{\rm reg}).$$
It is straightforward that $j(Y)$ lies in $\tilde{X}$ and that the diagram
$$
\xymatrix{ & \tilde{X}\ar[d]^{\nu:=\pr_1\circ \mu} \\
Y \ar[ur]^j\ar[r]^i & X_{\rm reg}}
$$
is an \'etale neighborhood of $Y$ in $X_{\rm reg}$ which satisfies the
conclusion of 1).

Let us now assume that the hypotheses of 2) are satisfied. Then the normal
bundle of $X$ in $Y$ is isomorphic to
$\bigoplus_{j=1}^e \cO_X(H_j)_{\vert Y},$ hence is ample. According to Part 1)
of the Theorem, there exists an \'etale neighborhood (\ref{etnbdreg}) of $Y$ in
$X$ and a morphism of $k$-schemes $\phi: \tilde{X} \ra Z$ such that the diagram
(\ref{commnor}) is commutative. Proposition \ref{etZar} shows that, after
possibly shrinking $\tilde{X},$ we may assume that $\nu$ is an open immersion.
It may be identified with the inclusion morphism in $X_{\rm reg}$ of some open
Zariski neighborhood $U$ of $Y$, and the commutativity of  (\ref{commnor})
precisely asserts that $\hat{\phi}= \psi.$

\end{proof}

\subsection{Algebraization of formal bundles and subbundles}\label{secalgbund}

In this Section, we place ourselves under the assumptions of Theorem
\ref{algmor}, 2).  Namely, we denote by $X$  a projective $k$-scheme of
dimension $d$, by $e$ a positive integer $\leq d-1$, and by $H_1,\ldots,H_e$
ample effective Cartier divisors in $X$ such that 
$$Y:=  H_1 \cap \cdots \cap H_e$$
has (necessarily pure) dimension $d-e$ and is contained in the regular locus
$X_{\rm reg}$ of $X$. 

Besides, we choose an ample line bundle $\cO_X(1)$ over $X$. 

We shall say that a vector bundle $\hE$ over $\hX_Y$ is \emph{algebraizable} if
there exists a coherent sheaf of $\cO_X$-modules $E$ over $X$ (necessarily
locally free on some open Zariski neighborhood of $Y$)  such that $\hE \simeq
E_{\vert \hX_Y}$, or equivalently, if there exists a vector bundle on some open
Zariski neighborhood  of $Y$ in $X$ such that $\hE \simeq E_{\vert \hX_Y}.$

The category of algebraizable vector bundles over $\hX_Y$ is clearly stable
under elementary tensor operations, like directs sums, tensor products, and
dual.  

\begin{theorem}\label{Lef} 1) For any formal vector bundle $\hE$ over $\hX_Y$,
the space of global sections $\Gamma (\hX_Y, \hE)$ is a finite dimensional
$k$-vector space.

2) For any open Zariski neighborhood $V$ of $Y$ in $X$ included in the open
subscheme $X_{\rm nor}$ of normal points of $X$ and any vector bundle $E$ over
$V,$ the restriction map
\begin{equation}\label{isomEV}
\Gamma(V,E) \lra \Gamma (\hX_Y, E_{\vert \hX_Y})
\end{equation}
is an isomorphism.
\end{theorem}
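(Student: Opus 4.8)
The plan is to deduce both statements from the algebraization criterion for formal morphisms, Theorem \ref{algmor}, 2), applied to the morphism that ``takes a global section of a formal bundle and spreads it out''. First I would set up the right target scheme. For a vector bundle $E$ over a Zariski neighborhood $V$ of $Y$, sections of $E_{\vert\hX_Y}$ are the same as morphisms of formal $k$-schemes $\psi:\hX_Y\to\mathbb{V}(E^\vee):=\mathrm{Spec}_V(\mathrm{Sym}(E^\vee))$ lifting the inclusion $\hX_Y\hra V$; this is just the functor-of-points description of the total space of a vector bundle. By Theorem \ref{algmor}, 2), applied with $Z=\mathbb{V}(E^\vee)$ (after noting $N_YX\simeq\bigoplus_j\cO_X(H_j)_{\vert Y}$ is ample, so the hypotheses of part 2) hold for $Y=H_1\cap\cdots\cap H_e$), such a $\psi$ is induced by a genuine $k$-morphism $\phi:U\to\mathbb{V}(E^\vee)$ on some Zariski neighborhood $U\subset V$ of $Y$ compatible with projection to $X$, i.e. by an algebraic section $s\in\Gamma(U,E_{\vert U})$ with $s_{\vert\hX_Y}=\psi$. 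This proves that the restriction map $\Gamma(U,E_{\vert U})\to\Gamma(\hX_Y,E_{\vert\hX_Y})$ is \emph{surjective} for a suitable $U$ depending a priori on the section.

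Next I would upgrade this to statement 2), the isomorphism \eqref{isomEV} with a \emph{fixed} $V\subseteq X_{\rm nor}$. Injectivity is easy: if $s\in\Gamma(V,E)$ restricts to $0$ on $\hX_Y$, then $s$ vanishes on every infinitesimal neighborhood of $Y$, hence on the whole connected component of $V$ meeting $Y$; since $V$ is normal and $Y=H_1\cap\cdots\cap H_e$ is connected by Theorem \ref{new} (using $e\le d-1$), $s=0$. For surjectivity with fixed $V$, I would use Proposition \ref{etZar}: given a formal section, part 2) of Theorem \ref{algmor} produces it on \emph{some} Zariski neighborhood $U$ of $Y$; but we want it on all of $V$. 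Here the point is that two algebraic sections on $U\cap V$ that agree formally along $Y$ agree on a neighborhood of $Y$ by the injectivity just proved, and then one glues: the sheaf-theoretic statement is that $E\to j_*j^*E$ is an isomorphism near $Y$ when $j$ is the inclusion of such a neighborhood, because the normality of $V$ and the fact that $V\setminus U$ meets $Y$-neighborhoods in codimension $\ge 2$ (again via ampleness and Theorem \ref{new}/Proposition \ref{etZar}) force the section to extend. So $\Gamma(V,E)\to\Gamma(U,E_{\vert U})$ is an isomorphism, and composing gives \eqref{isomEV}.

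Finally, statement 1), finite-dimensionality of $\Gamma(\hX_Y,\hE)$ for an \emph{arbitrary} formal bundle $\hE$ (not assumed algebraizable). I would reduce to the algebraic case by first algebraizing $\hE$ itself, or rather by embedding: every formal vector bundle on $\hX_Y$ is a quotient of $(\cO_{\hX_Y}(m))^{\oplus N}$ for $m\gg0$ by the usual argument (twist until globally generated, using that $\hX_Y$ is the completion of the projective $X$ along $Y$ and coherent cohomology of the thickenings $Y_i$ stabilizes), hence a sub of a dual of such, so $\Gamma(\hX_Y,\hE)\hra\Gamma(\hX_Y,(\cO_{\hX_Y}(m))^{\oplus N'})$ for suitable twists; and $\cO_{\hX_Y}(m)=(\cO_X(m))_{\vert\hX_Y}$ is algebraizable with $V=X_{\rm reg}$, so part 2) gives $\Gamma(\hX_Y,\cO_{\hX_Y}(m))=\Gamma(X_{\rm reg},\cO_X(m))$, which is finite-dimensional since $X$ is projective and normal (so $X\setminus X_{\rm reg}$ has codimension $\ge2$ when $X$ is normal — here one should be a little careful and instead just use that $\Gamma(X_{\rm reg},\cO_X(m))\subseteq\Gamma(X,j_*\cO_{X_{\rm reg}}(m))$ is a submodule of a coherent sheaf's sections). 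I expect the main obstacle to be the passage from the ``neighborhood depending on the section'' produced by Theorem \ref{algmor} to the uniform statement over a fixed $V$ in part 2); this is exactly where normality of $X$ (equivalently, working inside $X_{\rm nor}$) and the connectivity/codimension input of Theorem \ref{new} and Proposition \ref{etZar} must be used, and getting the gluing argument clean — rather than circular — is the delicate point.
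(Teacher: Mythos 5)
Your proof of part 2) is essentially the paper's own argument: identify formal sections of $E_{\vert\hX_Y}$ with formal sections of the structural morphism of $\V(E^\vee)=\Spec_V\,{\rm Sym}\,E^\vee$, algebraize them via Theorem \ref{algmor}, 2) on some Zariski neighborhood $U$ of $Y$, and then extend from $U$ to the fixed $V$ using that $V\setminus U$ has codimension $\geq 2$ together with normality (depth $\geq 2$). One small correction there: the codimension bound does not come from Theorem \ref{new} or Proposition \ref{etZar} (those concern connectivity and \'etale neighborhoods); it is the direct intersection-theoretic observation that any closed integral $I\subset X$ disjoint from $Y=H_1\cap\cdots\cap H_e$ has $\dim I<e\leq d-1$, since otherwise $\cO_X(1)^{\dim I-e}\cdot H_1\cdots H_e\cdot I$ would vanish, contradicting ampleness. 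With that fix, part 2) is correct and follows the paper.

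Part 1) is where you have a genuine gap, and it is not the obstacle you flagged. Your reduction rests on the claim that every formal vector bundle $\hE$ on $\hX_Y$ becomes globally generated after twisting by $\cO_X(m)$ for $m\gg 0$, ``by the usual argument.'' But for formal bundles on $\hX_Y$ this is not a soft statement: by Theorem \ref{algebglobsec} of the Appendix, global generation of some twist $\hE(D)$ is \emph{equivalent} to algebraizability of $\hE$, and the paper exhibits non-algebraizable formal line bundles on $\hX_Y$ whenever $\dim Y=1$ (the Proposition on divisors $\div f$ in the comments section shows the completion functor is not essentially surjective there). So your argument, if it worked, would prove that every formal bundle is algebraizable, which is false; concretely, $\Gamma(\hX_Y,\hE(m))=\varprojlim_i\Gamma(Y_i,\hE(m)_{\vert Y_i})$ and there is no uniform $m$ making these sections generate, because the transition maps in the inverse system need not be surjective and Serre vanishing on each thickening $Y_i$ gives an $m$ depending on $i$. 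The paper does not prove part 1) at all: it is quoted as a special case of Hartshorne's Theorem 6.2 in \cite{Hartshorne68} (finiteness results for formal schemes with ample normal bundle, in the circle of ideas around formal-rational functions), and the authors explicitly note that part 1) is not actually used in the Appendix --- in the one place it appears, the proof of Theorem \ref{algebglobsec}, it can be replaced by choosing a sufficiently large finite-dimensional subspace of $\Gamma(\hX_Y,\hE(D))$.
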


 Part 1) of Theorem \ref{Lef} is a special case of \cite{Hartshorne68}, Theorem
6.2. Actually, we will not really use this result in this Appendix.
 
 In the terminology of  \cite{GrothendieckSGA2}, X.2, and  \cite{Hartshorne70},
IV.1, Part 2) asserts that the pair $(X_{\rm nor}, Y)$ satisfies the
\emph{Lefschetz property} $\mathbf{Lef}(X_{\rm nor}, Y).$ When $e=1$, it is a
special case of \cite{GrothendieckSGA2}, XII.2, Corollaire\footnote{Actually,
the statement of Corollaire 2.4 in \emph{loc. cit.} does not exactly cover the
situation we deal with here, but require slightly stronger hypotheses, satisfied
for instance when $X$ itself is normal.  By working on the normalization of $X$,
one may actually assume that these hypotheses are satisfied.} 2.4. When  $X$ is
smooth over $k$, it is proved in \cite{Hartshorne70}, X.1, Corollary 1.2.
\begin{proof} We are left to prove 2) in our setting. It will follow from the
the algebraization criterion for formal morphisms established in the previous
section.

Let us consider $V$ and $E$ as in 2), and let us introduce the ``total space"
$$\V(E^\vee) := \Spec_V {\rm Sym} E^\vee$$
of the vector bundle $E$, and its structural morphism
$$p:   \V(E^\vee) \lra V.$$

For any open subscheme $U$ of $V$, the elements of $\Gamma(U,E)$ may be
identified with the morphisms of $k$-schemes 
$$s: U \lra \V(E^\vee)$$
which are sections of $p$ over $U$ (that is, which satisfy  $p\circ s = Id_U$).
Similarly, the elements of $\Gamma (\hX_Y,  E_{\vert \hX_Y})$ may be 
identified with the $k$-morphisms of formal schemes 
$$t: \hX_Y \lra  \V(E^\vee)$$
which are sections of $p$ over $\hX_Y.$ 

Observe also that, for any morphism of $k$-schemes
$$\phi: U \lra \hV(E^\vee) $$
defined on some open Zariski neighborhood of $U$ of $Y$ in $X,$ if the induced
morphism of formal schemes
$$\hat{\phi}: \hX_Y = \hV_Y \lra \V(E^\vee)$$  
is a section  of $p$ over $\hX_Y$, then $\phi$ is a section of $p$ over $U$
(indeed $Y$ is non-empty, and $X$ --- hence $U$ --- is an integral scheme).

The injectivity of the restriction morphism (\ref{isomEV}) is straightforward.
Let us show that it is surjective.

Together with the remarks above, Theorem \ref{algmor}, 2) applied with $T:=
\V(E^\vee)$ establishes that, for any formal section $t$ in $\Gamma (\hX_Y, 
E_{\vert \hX_Y})$, there exists an open Zariski neighborhood $U$ of $Y$ in $V$
and a section $s$ in $\Gamma(U, E)$ such that 
$$s_{\vert \hX_Y} = t.$$

 Observe that the dimension $\dim I$ of  any closed integral subscheme $I$ of
$X$ disjoint of $Y$ satisfies
 $$\dim I < e.$$
 (Otherwise the intersection number 
 $$\cO_X(1)^{\dim I -e}. H_1.\ldots.H_e.I$$
 would vanish, in contradiction to the ampleness of $\cO_X(1)$ and of
$H_1,\ldots, H_e.$)
 Consequently the codimension in $X$ of any component of $X\setminus U$ is at
least $2$. This implies that the depth of $\cO_X$ at every point of $X_{\rm nor}
\setminus U$ is at least $2$ and that the restriction morphism
$$\Gamma (V,E) \lra \Gamma(U,E)$$
is an isomorphism. 

This shows that the section $s$ of $E$ over $U$ uniquely extends to a section
over $V$, and completes the proof.

\end{proof}

The following two theorems are closely related --- each of them may easily be
deduced from the other one --- and will be established together.

\begin{theorem}\label{algebglobsec} For any vector bundle $\hE$ over $\hX_Y$,
the following conditions are equivalent:
\begin{enumerate}
\item The vector bundle $\hE$ is algebraizable.
\item For any large enough positive integer $D$, the vector bundle  over 
$\hX_Y$
$$ \hE(D):=\hE\otimes_{\cO_X}\cO_X(D)$$  is generated by its global sections
over $\hX_Y$.
\item For some integer $D$, the vector bundle  $\hE(D)$ is generated by its
global sections over $\hX_Y$.
\end{enumerate}
\end{theorem}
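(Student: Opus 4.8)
The plan is to prove the equivalence $(1)\Leftrightarrow(2)\Leftrightarrow(3)$ in the usual cycle $(1)\Rightarrow(2)\Rightarrow(3)\Rightarrow(1)$, where the only substantial implication is the last one; the first two are soft.

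\textbf{Proof of $(1)\Rightarrow(2)$.} Suppose $\hE\simeq E_{\vert\hX_Y}$ for a vector bundle $E$ on an open Zariski neighborhood $V$ of $Y$ in $X$. By the argument with intersection numbers appearing at the end of the proof of Theorem~\ref{Lef} (the codimension in $X$ of every component of $X\setminus V$ is at least $2$ since $Y$ is cut out by $e\le d-1$ ample divisors), the reflexive hull of $E$ extends to a coherent sheaf, still called $E$, defined on all of $X$ and locally free near $Y$; we may assume $\cO_X(1)$ is very ample. For $D$ large, Serre's theorem on $X$ gives that $E(D)$ is globally generated in a neighborhood of $Y$, i.e.\ the evaluation map $\Gamma(X,E(D))\otimes_k\cO_X\to E(D)$ is surjective at every point of $Y$. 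Restricting to $\hX_Y$ and using the restriction map $\Gamma(X,E(D))\to\Gamma(\hX_Y,\hE(D))$, one concludes that $\hE(D)$ is generated by the images of these finitely many global sections, hence by its global sections over $\hX_Y$. This proves $(2)$; note we only used one large $D$, but the same argument works for all large $D$.

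\textbf{Proof of $(2)\Rightarrow(3)$.} Immediate: $(2)$ asserts global generation of $\hE(D)$ for all large $D$, in particular for some $D$.

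\textbf{Proof of $(3)\Rightarrow(1)$.} This is the main point. Assume $\hE(D)$ is generated by its global sections over $\hX_Y$ for some integer $D$. By Part~1) of Theorem~\ref{Lef}, $\Gamma(\hX_Y,\hE(D))$ is a finite-dimensional $k$-vector space, say of dimension $N$; choosing a basis yields a surjection of formal vector bundles $\cO_{\hX_Y}^{\,N}\twoheadrightarrow\hE(D)$, equivalently a surjection $\cO_{\hX_Y}^{\,N}(-D)\twoheadrightarrow\hE$, whose kernel $\hK$ is again a vector bundle over $\hX_Y$ (being the kernel of a surjection of bundles). The strategy is now to realize this exact sequence algebraically. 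Applying the already-established implications $(1)\Rightarrow(3)$ to the algebraizable bundle $\cO_{\hX_Y}(-D)$ (which is obviously algebraic), and dualizing the surjection, we get an injection of bundles $\hE^\vee\hookrightarrow\cO_{\hX_Y}^{\,N}(D)$; equivalently, a collection of $N$ global sections of $\hE^\vee(D)=\hE^\vee\otimes\cO_X(D)$ which generate it. The surjection $\cO_{\hX_Y}^{\,N}(-D)\twoheadrightarrow\hE$ is a point of the Grassmannian-type functor, and I would instead argue directly via the algebraization of formal morphisms, Theorem~\ref{algmor}, 2): the surjection $q:\cO_{\hX_Y}^{\,N}(-D)\twoheadrightarrow\hE$ is a morphism of formal schemes from $\hX_Y$ to a suitable relative Grassmannian bundle $T=\mathrm{Grass}$ over $X$ parametrizing rank-$r$ locally free quotients of $\cO_X^{\,N}(-D)$, where $r=\rk\hE$; this $T$ is a projective $X$-scheme, hence a projective $k$-scheme. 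By Theorem~\ref{algmor}, 2), the classifying formal morphism $\psi:\hX_Y\to T$ is induced by an honest morphism $\phi:U\to T$ on some open Zariski neighborhood $U$ of $Y$ in $X$ (here we use $Y=H_1\cap\cdots\cap H_e$ with $1\le e\le d-1$, $Y\subset X_{\rm reg}$, which are exactly the running hypotheses of Section~\ref{secalgbund}). The morphism $\phi$, being a section over $U$ of the structural morphism $T\to X$ extending $\psi$, classifies a rank-$r$ locally free quotient $\cO_U^{\,N}(-D)\twoheadrightarrow E$ on $U$ whose restriction to $\hX_Y$ is $q$. Thus $E$ is a vector bundle on the Zariski neighborhood $U$ of $Y$ with $E_{\vert\hX_Y}\simeq\hE$, which is exactly $(1)$.

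\textbf{Main obstacle.} The delicate step is $(3)\Rightarrow(1)$, and specifically the correct packaging of the globally generated bundle $\hE(D)$ as a formal morphism to a \emph{projective} $k$-scheme $T$ to which Theorem~\ref{algmor}, 2) applies: one must make sure $T$ is a $k$-scheme of finite type (a relative Grassmannian over the projective $X$ works, using the finiteness of $N=\dim_k\Gamma(\hX_Y,\hE(D))$ from Theorem~\ref{Lef}, 1)), and that the formal section $\psi$ lands in the open locus where the tautological quotient is locally free — which it does because $\hE$ is a bundle and $X$ is regular along $Y$, so the quotient stays locally free on a neighborhood. Everything else (the passage between surjections of bundles and their kernels, the compatibility of restriction maps to $\hX_Y$, the use of $\mathbf{Lef}$ to extend from $U$ to a maximal neighborhood) is formal once this is set up.
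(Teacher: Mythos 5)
Your proposal is correct and follows essentially the same route as the paper: the same cycle of implications, with $(1)\Rightarrow(2)$ via Serre vanishing/global generation on a coherent extension to $X$, and $(3)\Rightarrow(1)$ via the tautological surjection from a trivial bundle (finite-dimensionality of $\Gamma(\hX_Y,\hE(D))$ from Theorem~\ref{Lef}, 1)) combined with algebraization of the classifying map to a relative Grassmannian through Theorem~\ref{algmor}, 2). The only cosmetic difference is that you inline the Grassmannian argument for the specific quotient $\cO_{\hX_Y}^{N}(-D)\twoheadrightarrow\hE$, whereas the paper packages that same argument as the separate Theorem~\ref{subquot} and then cites it.
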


 With the notation of Theorem \ref{algebglobsec}, observe that a morphism of
coherent 
$\cO_{\hX_Y}$-modules $\hat{\varphi}: \hat{\cF} \lra \hat{\cG}$ is onto iff its
restriction $\hat{\varphi}_{\mid Y}: \hat{\cF}_{\mid Y} \lra \hat{\cG}_{\mid Y}$
to $Y$ is onto (this directly follows from Nakayama's Lemma). Consequently, the
vector bundle $\hE(D)$ is generated by its global sections over $\hX_Y$ iff the
vector bundle over $Y$
$$\hE(D)_{\vert Y} \simeq \hE_{\vert Y} \otimes_{\cO_X} \cO_X(D)$$
is generated by its global sections in the image of the restriction morphism
\begin{equation}\label{surYfor}
\Gamma(\hX_Y, \hE(D)) \lra \Gamma (Y, \hE(D)_{\vert Y}).
\end{equation}

\begin{theorem}\label{subquot} If  a vector bundle $\hE$ over $\hX_Y$ is
algebraizable, then any quotient vector bundle and any sub-vector
bundle\footnote{Namely, any subsheaf of $\cO_{\hX_Y}$-modules which is locally a
direct summand.} of $\hE$ is algebraizable.
\end{theorem}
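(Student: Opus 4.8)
The plan is to establish Theorems \ref{algebglobsec} and \ref{subquot} together, deducing both from the algebraization criterion for formal morphisms (Theorem \ref{algmor}, 2) and from the observation recorded immediately after the statement of Theorem \ref{algebglobsec}. I would first prove the three-way equivalence of Theorem \ref{algebglobsec}, and then read off Theorem \ref{subquot} from it.

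For Theorem \ref{algebglobsec}, the implication $(2)\Rightarrow(3)$ is trivial. For $(1)\Rightarrow(2)$, write $\hE\simeq E_{\vert\hX_Y}$ with $E$ coherent on the projective $k$-scheme $X$; by Serre's theorem $E(D)$ is globally generated on $X$ for $D\gg 0$, and since the composite $\Gamma(X,E(D))\to\Gamma(\hX_Y,\hE(D))\to\Gamma(Y,\hE(D)_{\vert Y})$ is the restriction map from $X$ to $Y$, its image generates $\hE(D)_{\vert Y}$; by the observation following Theorem \ref{algebglobsec} this is exactly the assertion that $\hE(D)$ is generated by its global sections over $\hX_Y$. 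For $(3)\Rightarrow(1)$, assume $\hE(D)$ is generated by its global sections over $\hX_Y$; as $Y$ is proper over $k$, $\Gamma(Y,\hE(D)_{\vert Y})$ is finite-dimensional (one may also invoke Part 1 of Theorem \ref{Lef}), so finitely many global sections already generate, producing a surjection $\cO_{\hX_Y}^N\twoheadrightarrow\hE(D)$ of formal vector bundles, say of rank $r$. This surjection is precisely a morphism of formal $k$-schemes $\psi\colon\hX_Y\to G$, where $G$ is the Grassmannian of rank-$r$ locally free quotients of $\cO^N$ (a projective $k$-scheme), under which the pullback of the tautological quotient bundle is $\hE(D)$, compatibly with $\psi$. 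Applying Theorem \ref{algmor}, 2) with $T=G$ produces an open Zariski neighborhood $U$ of $Y$ in $X$ and a morphism $\phi\colon U\to G$ with $\hat\phi=\psi$; then $E:=\bigl(\text{pullback along }\phi\text{ of the tautological quotient}\bigr)\otimes_{\cO_X}\cO_X(-D)$ is a vector bundle on $U$ with $E_{\vert\hX_Y}\simeq\hE$, so $\hE$ is algebraizable.

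Theorem \ref{subquot} then follows formally. Let $q\colon\hE\twoheadrightarrow\widehat{Q}$ be a quotient vector bundle of an algebraizable $\hE$. By $(1)\Rightarrow(2)$ of Theorem \ref{algebglobsec}, for $D\gg 0$ the image of $\Gamma(\hX_Y,\hE(D))$ in $\Gamma(Y,\hE(D)_{\vert Y})$ generates $\hE(D)_{\vert Y}$; composing with the surjection $\hE(D)_{\vert Y}\twoheadrightarrow\widehat{Q}(D)_{\vert Y}$ shows that the image of $\Gamma(\hX_Y,\hE(D))$ in $\Gamma(Y,\widehat{Q}(D)_{\vert Y})$ generates $\widehat{Q}(D)_{\vert Y}$. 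By functoriality of restriction to $Y$, this image factors through $\Gamma(\hX_Y,\widehat{Q}(D))$, so the image of the latter in $\Gamma(Y,\widehat{Q}(D)_{\vert Y})$ already generates $\widehat{Q}(D)_{\vert Y}$; by the observation after Theorem \ref{algebglobsec}, $\widehat{Q}(D)$ is generated by its global sections over $\hX_Y$, and $(3)\Rightarrow(1)$ yields that $\widehat{Q}$ is algebraizable. If instead $\widehat{S}\hookrightarrow\hE$ is a sub-vector bundle, i.e. a subsheaf which is locally a direct summand, then dualizing gives a quotient vector bundle $\hE^\vee\twoheadrightarrow\widehat{S}^\vee$; since the class of algebraizable bundles is stable under duals, $\hE^\vee$ is algebraizable, hence so is $\widehat{S}^\vee$ by the quotient case, and therefore $\widehat{S}\simeq(\widehat{S}^\vee)^\vee$ is algebraizable.

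I expect the main obstacle to be the step $(3)\Rightarrow(1)$: one must check carefully that a formal vector bundle on $\hX_Y$ generated by finitely many global sections genuinely defines a morphism of formal $k$-schemes to the Grassmannian in the precise sense used in the Appendix (a compatible system of morphisms from the infinitesimal neighborhoods of $Y$), and that the algebraic morphism furnished by Theorem \ref{algmor}, 2) pulls the tautological bundle back to a vector bundle on a Zariski neighborhood of $Y$ whose completion along $Y$ recovers $\hE(D)$ compatibly with $\psi$. The remaining steps are routine diagram chases with restriction maps and with the double-dual identification for vector bundles.
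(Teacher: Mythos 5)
Your proposal is correct, but it reverses the logical order of the paper's argument and uses a slightly different Grassmannian. The paper proves Theorem \ref{subquot} \emph{first} and directly: given an algebraic model $E$ on a Zariski neighborhood $V$ of $Y$, a quotient bundle of $E_{\vert \hX_Y}$ is classified by a formal section $\psi:\hX_Y\to\mathbf{Gr}$ of the \emph{relative} Grassmannian $p:\mathbf{Grass}(E)\to V$, and Theorem \ref{algmor}, 2) applied to $T=\mathbf{Gr}$ algebrizes $\psi$; sub-bundles are then handled by the same duality trick you use. The implication $(3)\Rightarrow(1)$ of Theorem \ref{algebglobsec} is afterwards deduced \emph{from} Theorem \ref{subquot}, by observing that a globally generated $\hE(D)$ is a quotient of the trivial (hence algebraizable) bundle $\Gamma(\hX_Y,\hE(D))\otimes_k\cO_{\hX_Y}$. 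You instead prove $(3)\Rightarrow(1)$ directly, by regarding the surjection $\cO_{\hX_Y}^N\twoheadrightarrow\hE(D)$ as a formal morphism to the \emph{absolute} Grassmannian of rank-$r$ quotients of $k^N$ and algebrizing that morphism via Theorem \ref{algmor}, 2); Theorem \ref{subquot} then falls out formally by twisting a quotient of an algebraizable bundle until it is globally generated. Both routes rest on exactly the same key input --- the algebraization of a formal classifying map to a Grassmannian --- and are of comparable length; what your version buys is that only the classical Grassmannian of a trivial bundle is needed and that Theorem \ref{subquot} becomes a purely formal corollary of Theorem \ref{algebglobsec}, at the cost of routing the quotient-bundle statement through global generation rather than treating it intrinsically. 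The points you flag as delicate (identifying a globally generated formal bundle with a compatible system of classifying maps on the infinitesimal neighborhoods $Y_i$, and checking that the algebraic map pulls the tautological quotient back to a bundle whose completion is $\hE(D)$) are exactly the ones the paper also has to address, and they go through verbatim; note also that $Y$ is connected by Theorem \ref{new}, so the rank $r$ is well defined.
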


\begin{proof} We begin by the proof of Theorem \ref{subquot}.

Let $E$ be a vector bundle over some open Zariski neighborhood $V$ of $Y$ in
$X$. We are going to show that any quotient bundle of $E_{\hX_Y}$ over $\hX_Y$
is algebraizable. By a straightforward duality argument, this will also prove
that any sub-vector bundle of an algebraizable vector bundle over $\hX_Y$ is
algebraizable. 

Let $$p: \mathbf{Gr} := \coprod_{0\leq n \leq \rk E} \mathbf{Grass}_n (E) \lra
V$$ be the Grasmannian scheme of the vector bundle $E$ over $V$, which
classifies locally free quotients of $E$ over varying $V$-schemes; \cf
\cite{EGAI}, Section 9.7. (It is a ``Zariski locally trivial bundle over $V$",
with a fiber isomorphic to the disjoint union of the classical Grassmann
varieties, classifying quotients of rank $n$  of $k^{\rk E}$, for $0\leq n \leq
\rk V.$)

The vector bundle $p^{\ast} E$ over $\mathbf{Gr}$ is equipped with a canonical
quotient bundle
$$\mathbf{q} : p^\ast E \lra \mathbf{Q},$$ and, by the very construction of
$\mathbf{Gr}$, for any subscheme $S$ of $X$, the map which sends a section
$\sigma$ of 
$$p_S: p^{-1}(S) \lra S$$ to the quotient vector bundle 
$$\sigma^\ast \mathbf{q} : E_{\vert S} \simeq \sigma^\ast p^\ast E \lra
\sigma^\ast \mathbf{Q}$$
establishes a bijection between the set of sections of $p_S$ and the set of
(isomorphisms classes) of quotient vector bundles of $E_{\vert S}$ over $S$.

By taking for $S$ the successive thickenings $Y_i$ of $Y$ in $X$, this bijective
correspondence extends between (formal) sections of $p$ over $\hX_Y$ and
quotient vector bundles of $\hE :=E_{\vert \hX_Y}$. In other words, for any
quotient vector bundle
\begin{equation}\label{quotfor}
\hat{q}:\hE \lra \widehat{Q},
\end{equation}
we may consider its ``classifying map" 
$$\psi: \hX_Y \lra \mathbf{Gr},$$
which is a section of $p$ over $\hX_Y$: the quotient vector bundle 
$$\psi^\ast \mathbf{q}: E_{\vert\hX_Y} \lra \psi^\ast \mathbf{Q}$$
is isomorphic to  (\ref{quotfor}).

According to Theorem \ref{algmor} applied to $T=\mathbf{Gr}$, the formal
morphism $\psi$ is induced by some morphism of $k$-schemes
$$\phi : U \lra \mathbf{Gr}$$
defined on some open Zariski neighborhood $U$ of $Y$ in $X$, which we may assume
to lie in $V$. 
It is straightforward that $\phi$, like $\psi$, is a section of $p$. Moreover
the corresponding quotient bundle 
$$\phi^\ast \mathbf{q}: E_{\vert U} \lra \phi^\ast \mathbf{Q}$$
becomes isomorphic to (\ref{quotfor}) after restriction to $\hX_Y$ (that is,
after completing along $Y$).  

In particular, $\widehat{Q}$ is isomorphic to the restriction to $\hX_Y$ of the
vector bundle $\phi^\ast \mathbf{Q}$ on $V$, and is therefore algebraizable.

We now turn to the proof of Theorem  \ref{algebglobsec}. 

To prove the implication $(1)\Rightarrow (2),$ observe that, for any
coherent sheaf of $\cO_X$-modules $E$ over $X$  such that $\hE \simeq E_{\vert
\hX_Y},$ 
the image of the restriction morphism 
\begin{equation}\label{surY}
 \Gamma(X, E(D)) \lra \Gamma (Y, E(D)_{\vert Y}) \simeq  \Gamma (Y,
\hE(D)_{\vert Y})
 \end{equation}
is contained in the image of (\ref{surYfor}), and that, for any large  enough
positive integer $D$, the  morphism (\ref{surY}) is surjective and  $E(D)_{\vert
Y}$ is generated by its global sections over $Y$.

The implication $(2)\Rightarrow (3)$ is trivial. 

Finally, assume that Condition (3) is satisfied, and consider the
``tautological" morphism of vector bundles over $\hX_Y$:
\begin{equation}\label{ponto}
p: \Gamma(\hX_Y, \hE(D))\otimes_k \cO_{\hX_Y} \lra \hE(D). 
\end{equation}
(Recall that, according to Theorem \ref{Lef}, 1), the $k$-vector space
$\Gamma(\hX_Y, \hE(D))$ is finite dimensional. We could easily avoid to rely on
this result by replacing $\Gamma(\hX_Y, \hE(D))$ by a ``sufficiently large"
finite dimensional sub-vector space.) By hypothesis, it is surjective, and
$\hE(D)$ is therefore identified with a quotient of the ``trivial" vector bundle
$\Gamma(\hX_Y, \hE(D))\otimes_k \cO_{\hX_Y}$ over $\hX_Y$, which is clearly
algebraizable. According to Theorem \ref{subquot}, $\hE(D)$ is therefore
algebraizable. Finally, the vector bundle
$$\hE \simeq \hE(D) \otimes_{\cO_X} \cO_{X}(-D)$$
is algebraizable. This completes the proof of  $(3)\Rightarrow (1)$.

\end{proof}

\subsection{Comments and examples}

\subsubsection{} Under the assumptions of Theorem \ref{algmor}, Part 1), the
conclusion of Part 2) does \emph{not} hold in general. In other words, under the
mere assumption of ampleness of the normal bundle $N_Y X$, the \'etale
neighborhood of $Y$ in $X$ onto which $\psi$ extends cannot be chosen to be a
Zariski neighborhood.

This is demonstrated by a classical example of Hironaka (see \cite{Hironaka68},
p. 588, and \cite{Hartshorne70}, Chapter V), which in its simplest form may be
presented as follows.

Consider a smooth, connected, projective threefold over $k$ whose algebraic
fundamental group is not trivial, and choose  a finite \'etale covering
$$\nu: \tilde{X} \lra X,$$
with $\tilde{X}$ connected and (necessarily) projective. Define $\tilde{Y}$ as a
``general" intersection of two effective divisors in the linear system defined
by some large multiple of an ample divisor on $\tilde{X}$.  Then $\tilde{Y}$ is
smooth, connected, its normal bundle $N_{\tilde{Y}}\tilde{X}$ is ample, and the
map
$$\nu_{\vert \tilde{Y}} \tilde{Y} \lra Y:=\nu(\tilde{Y})$$
is an isomorphism. 

Since $\nu$ is \'etale, it induces an isomorphism between completions
$$\hat{\nu}: \widehat{\tilde{X}}_{\tilde{Y}} \lrasim \hX_Y$$
and an isomorphism of normal vector bundles:
$$N_{\tilde{Y}}\tilde{X} \lrasim \nu_{\vert \tilde{Y}}^\ast N_Y X.$$
In particular, $N_Y X$ also is ample.

If we let 
$$T:= \tilde{X} \mbox{\;\;\;and \;\;\;} \psi:= \hat{\nu}^{-1}: \hX_Y \lra
\widehat{\tilde{X}}_{\tilde{Y}} \hra \tilde{X},$$
then it is straightforward that $\psi$ cannot be realized as the restriction to
$\hX_Y$ of some morphism of $k$-schemes from some open Zariski neighborhood of
$Y$ in $X$ to $\tilde{X}.$

\subsubsection{} The results of Section \ref{secalgbund} may be rephrased in
terms of functors between categories of vector bundles.  

Let indeed $V$ be an open Zariski neigborhood of $Y$ in $X_{\rm nor}$. We may
introduce the $k$-linear categories $\Bun (V)$ of vector bundles on $V$,
$\Bun(X_Y)$ of germs of vector bundles on $X$ along $Y$, and $\Bun(\hX_Y)$ of
vector bundles on $\hX_Y,$
and the obvious restriction (or completion) functors
$$\Bun(V) \stackrel{\digamma^V_{X_Y}}{\lra} \Bun (X_Y)
\stackrel{\digamma^{X_Y}_{\hX_Y}}{\lra}\Bun(\hX_Y).$$

Theorem \ref{Lef}, 2), and its proof show that the functors  $\digamma^V_{X_Y}$
and $\digamma^{X_Y}_{\hX_Y}$ are fully faithfull.

When $V$ is smooth, the functor $\digamma^V_{X_Y}$ is easily seen to be
essentially surjective iff $d=2$ (and consequently $e=1$).

The functor $\digamma^{X_Y}_{\hX_Y}$ is \emph{not} essentially surjective when
$$\dim Y (= d-e) =1.$$
Indeed, the following Proposition --- which is a straightforward consequence of
Theorem \ref{Lef}, 2) --- allows one to construct non-algebraizable formal
vector bundles on $\hX_Y$ when $\dim Y = 1.$

\begin{proposition} Let us keep the notation of Section  \ref{secalgbund}, and
assume that $\dim Y = 1.$

Let $p$ be a point of $Y(k)$ and let $f$ be an element of the local ring
$\cO_{\hX_Y,P}$ such that $f(P) =0$ and $f_{\vert Y} \in \cO_{Y,P}$ is
invertible on $\Spec \cO_{Y,P} \setminus \{P\}.$ Then the following two
conditions are equivalent:
\begin{enumerate}
\item  The line bundle on $\hX_Y$ defined by the divisor $\div f$ is
algebraizable.
\item There exists an effective (Cartier) divisor  on $X_{\rm reg}$ whose
completion along $Y$ coincides with $\div f$.
\end{enumerate}
\end{proposition}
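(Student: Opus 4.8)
The plan is to reduce the equivalence to the Lefschetz-type statement Theorem~\ref{Lef}, 2), once the divisor $\div f$ has been interpreted correctly. First I would check that $\div f$ really does define a \emph{global} effective Cartier divisor on $\hX_Y$. The element $f$ is represented by a section of $\cO_{\hX_Y}$ on some formal open neighbourhood $\hat W$ of $P$; its image $f_{\vert Y}$ in $\cO_{Y,P}$ is a nonzerodivisor that vanishes at $P$, so, $\cO_{Y,P}$ being one-dimensional, the vanishing locus of $f_{\vert Y}$ is the single point $P$ in a Zariski neighbourhood of $P$ in $Y$. After shrinking $W$, $f$ is therefore a unit on $\hat W\setminus\{P\}$, and the effective Cartier divisor $\div f$ on $\hat W$ glues with the zero divisor on $\hX_Y\setminus\{P\}$ to an effective Cartier divisor $D_f$ on $\hX_Y$; write $\cL_f:=\cO_{\hX_Y}(D_f)$, and let $s_f\in\Gamma(\hX_Y,\cL_f)$ be the canonical section with $\div s_f=D_f$. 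Condition (1) is the assertion that $\cL_f$ is algebraizable.

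The implication $(2)\Rightarrow(1)$ is then immediate: if $D$ is an effective Cartier divisor on $X_{\rm reg}$ with completion $\hat D=D_f$ along $Y$, then $\cO_{X_{\rm reg}}(D)$ is a line bundle on the open Zariski neighbourhood $X_{\rm reg}$ of $Y$ in $X$ whose restriction along $Y$ is $\cO_{\hX_Y}(\hat D)=\cL_f$, so $\cL_f$ is algebraizable.

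For $(1)\Rightarrow(2)$ I would proceed as follows. Algebraizability provides a line bundle $L$ on some open Zariski neighbourhood $V$ of $Y$ in $X$ together with an isomorphism $L_{\vert\hX_Y}\cong\cL_f$; after intersecting with $X_{\rm reg}$ and passing to the connected component containing $Y$ (a regular scheme is a disjoint union of integral ones, and $Y$ is connected by Theorem~\ref{new}), we may assume that $V\subseteq X_{\rm reg}$ is integral. Transporting $s_f$ produces $\hat\sigma\in\Gamma(\hX_Y,L_{\vert\hX_Y})$ with $\div\hat\sigma=D_f$. By Theorem~\ref{Lef}, 2), the restriction map $\Gamma(V,L)\to\Gamma(\hX_Y,L_{\vert\hX_Y})$ is an isomorphism, so $\hat\sigma=\sigma_{\vert\hX_Y}$ for a unique $\sigma\in\Gamma(V,L)$, and $\sigma\ne0$ since $\hat\sigma\ne0$ and $V$ is integral. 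Hence $D:=\div_V\sigma$, the zero scheme of $\sigma$, is an effective Cartier divisor on $V$. Because the zero scheme of a section of a line bundle is cut out locally by a single equation, it commutes with completion along $Y$, so $\hat D=\div_{\hX_Y}\hat\sigma=D_f$. Finally I would take the Zariski closure $\overline D$ of $D$ in $X_{\rm reg}$: it is an effective Weil divisor, hence an effective Cartier divisor since $X_{\rm reg}$ is regular, and $\hat{\overline D}=\hat D=D_f$ because $Y\subseteq V$. This $\overline D$ satisfies (2).

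The step I expect to require the most care is the very first one: verifying that $\div f$ genuinely extends from a neighbourhood of $P$ to a line bundle on all of $\hX_Y$, and identifying that line bundle with the object to which Theorem~\ref{Lef}, 2), is applied. Everything afterwards is formal — the isomorphism of global sections furnished by the Lefschetz property, the locality of ``zero scheme of a section'' under completion, and the extension of an effective divisor across the regular locus — and involves no serious computation.
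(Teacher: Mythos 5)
Your proof is correct and is exactly the argument the paper intends: the paper states this proposition as "a straightforward consequence of Theorem \ref{Lef}, 2)" and gives no further details, and your write-up — gluing $\div f$ to a global Cartier divisor $D_f$ on $\hX_Y$, the trivial direction $(2)\Rightarrow(1)$, and algebraizing the canonical section via the isomorphism $\Gamma(V,L)\lrasim\Gamma(\hX_Y,L_{\vert\hX_Y})$ before spreading the resulting divisor out over the regular locus — is precisely that straightforward consequence, carefully spelled out.
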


The essential surjectivity of the functor  $\digamma^{X_Y}_{\hX_Y}$ is precisely
the \emph{effective Lefschetz property} $\mathbf{Leff}(X,Y)$ considered in
\cite{GrothendieckSGA2}, X.2, and  \cite{Hartshorne70}, IV.1. According to
\cite{GrothendieckSGA2}, XII, Corollaire 3.4 and to \cite{Hartshorne70}, IV,
Theorem 1.5, it holds under the assumptions of Section \ref{secalgbund},  when
moreover $X$ is smooth, some positive multiples of the $H_j$'s lie in the same
linear system, and $\dim Y \geq 2$.

It appears very likely that, under the assumptions of Section \ref{secalgbund},
this last condition   $\dim Y \geq 2$ would be enough to ensure the validity of
$\mathbf{Leff}(X,Y)$.

This discussion shows that Theorem \ref{subquot}, which asserts the stability
under quotients of the essential image of $\digamma^{X_Y}_{\hX_Y}$, is
significant mostly when $\dim Y =1.$

\end{document}